\numberwithin{equation}{section}
\newcommand{\R}{\mathbbm{R}}
\newcommand{\E}{{\mathcal{E}}}
\newcommand{\V}{\mathbbm{V}}
\newcommand{\Y}{\mathbbm{Y}}
\newcommand{\K}{\tilde{\mathcal{K}}(\Phi)}
\newcommand{\KK}{\mathcal{K}(\Phi)}
\newcommand{\xiz}{\Big(\xi+\frac{|\zeta|^2}{2}\Big)}
\newcommand{\C}{\mathcal{C}}
\newcommand{\de}{\,\mathrm{d}}
\newcommand{\f}[1]{\mathbf{#1}}
\DeclareMathOperator*{\argmin}{arg\,min}
\DeclareMathOperator*{\diiv}{div}
\renewcommand{\t}{\partial_t}
\DeclareMathOperator{\dom}{dom}
\newcommand{\sym}{\mathrm{sym}}
\newcommand{\BV}{\mathrm{BV}}
\newcommand{\N}{\mathbbm N}
\newcommand{\ol}[1]{\overline{#1}}
\renewcommand{\curl}{\nabla\times}
\newtheorem{theorem}{Theorem}[section]
\newtheorem{lemma}[theorem]{Lemma}
\newtheorem{proposition}[theorem]{Proposition}
\theoremstyle{definition}
\newtheorem{definition}[theorem]{Definition}
\newtheorem{remark}[theorem]{Remark}
\title{Existence and selection of solutions in the energy-variational framework with applications in fluid dynamics
\footnotetext{%
\textit{Keywords:} 
global existence, selection criteria, energy-variational solutions, abstract evolution equations, time discretization, Euler--Korteweg system, binormal curvature flow, vortex filament equation.}
\footnotetext{%
\textit{MSC 2020: }
35A01,
35A15,
35D99,
35Q35,
76B47,
76N10
}}
\author{Thomas Eiter\thanks{Freie Universit\"at Berlin, Department of Mathematics and Computer Science, Arnimallee 14, 14195 Berlin, Germany.} \textsuperscript{,}%
\thanks{Weierstrass Institute for Applied Analysis and Stochastics, Anton-Wilhelm-Amo-Str.~39, 10117 Berlin, Germany. \\
Email: thomas.eiter@wias-berlin.de\\
\phantom{Email: }robert.lasarzik@wias-berlin.de\\
\phantom{Email: }marcel.sliwinski@wias-berlin.de} \ \quad 
Robert Lasarzik\footnotemark[2] \quad Marcel Śliwiński\footnotemark[2]}
\date{}
\begin{document}
\maketitle
\begin{abstract}
    We provide a novel existence result for energy-variational solutions to a general class of evolutionary partial differential equations. 
    Compared to previous works on this solution concept, the generalization is mainly twofold:
    a relaxation of the assumptions on the regularity weight and the admissibility of energies with merely linear growth. 
    We apply the abstract theory to the Euler--Korteweg 
    system 
    and to the equation for binormal curvature flow,
    which serve as examples that require the first and second generalization, respectively.
    Moreover, we discuss criteria 
    that are suitable for the selection of 
    particular energy-variational solutions
    in the possibly multi-valued solution set. 
\end{abstract}

\tableofcontents

\section{Introduction}

Ideas on weak solution concepts for partial differential equations (PDEs) can be traced back to Lagrange's study of the wave equation in 1760~\cite{lagrange},
and the seminal work by Leray~\cite{leray} on weak solutions to the Navier--Stokes equations
was the first such theory developed in a functional analytic framework. 
Nowadays, even further generalized notions of solutions have become common in 
mathematical research on PDEs, 
in particular in fluid mechanics.
For the Euler equations, there are different solvability frameworks 
that allow for establishing existence results.
The first global existence result for the incompressible Euler equations
is due to DiPerna and Majda~\cite{DiPernaMajda}, who used generalized Young measures 
to describe the limits of approximate solutions
in terms of \textit{measure-valued solutions}.
A different concept was introduced by Pierre-Louis Lions~\cite[Sec.~4.4]{lionsbook}
in the form of \emph{dissipative solutions}, based on a variational inequality, the relative energy inequality. 
These concepts have been further developed and applied to more involved equations. 
For instance, there are results concerning the
existence of measure-valued solutions to the
compressible isentropic Euler system~\cite{japMeasVal}, the Euler--Korteweg system and 
the Euler--Korteweg--Poisson system~\cite{gallenmüller2023cahnhillardkellersegelsystemshighfriction}, as well
as dissipative solutions to the Ericksen--Leslie equations~\cite{diss19} and a model for
magnetohydrodynamics~\cite{raymond}.
In this article, we focus on the concept of
\emph{energy-variational solutions}.
As has been shown before, it is suitable to derive general existence results for large classes of
systems of conservation laws~\cite{envarhyp}, 
including the Euler equations for incompressible and compressible fluids,
of more general damped Hamiltonian systems~\cite{viscoenvar},
and of models for visco-elasto-plasticity~\cite{eiter2025viscoelastoplasticity}.

Consider two Banach spaces $\Y$ and $\V$ with a dense and compact embedding $\mathbbm{Y}\hookrightarrow\mathbbm{V}$,
and an abstract evolutionary problem of the form
\begin{subequations}
\label{eq:general}
\begin{alignat}{2}
    \partial_t U(t) + A(t,U(t))&=0 \qquad &&\text{in } \mathbbm{Y}^* \text{ for }t\in(0,T), \label{gen eq}\\
    U(0,\cdot)&=U_0 \qquad &&\text{in } \mathbbm{V}^* ,\label{gen eq 1}
\end{alignat}
\end{subequations}
where $U:[0,T] \rightarrow \mathbbm{V}^*$  is the unknown state variable and $A$ is a (time-dependent) operator.
We assume the existence of a convex energy $\E$ and a dissipation potential $\Psi$ 
such that any smooth solution to \eqref{eq:general} satisfies the energy-dissipation balance
\begin{equation}
    \label{eq:energy.dissipation.intro}
    \mathcal{E}(U)\Big|^t_s + \int_s^t \Psi(\tau,U(\tau)) \dd{\tau} = 0
\end{equation}
for $s,t\in[0,T]$ with $s<t$.
An energy-variational solution consists of the state variable $U$ and 
a second variable $E\colon[0,T]\to[0,\infty)$ such that $\E(U)\leq E$ and
the variational inequality
\begin{equation}\label{envarintro}
[ E-\langle U, \Phi \rangle ]  \Big|^t_s +\int_s^t \langle U , \partial_t \Phi \rangle- \langle A(\tau,U), \Phi\rangle +\Psi(\tau,U) +\mathcal{K}(\Phi)[\mathcal{E}(U)-E] \dd{\tau} \leq 0
\end{equation}   
holds for all suitable test functions $\Phi \in \mathcal{C}^1([0,T];\mathbbm{Y})$ 
and almost all $s < t \in [0,T]$, including $s=0$ with $U(0)=U_0$.
Here, the \textit{regularity weight} $\mathcal K$ is chosen in a way
that ensures the weak* lower semicontinuity 
of the left-hand side of~\eqref{envarintro} 
with respect to the natural topology.
The energy-variational inequality~\eqref{envarintro}
can be seen as the sum of an energy-dissipation inequality, with $\E(U)$ replaced with $E$,
with a the weak formulation of~\eqref{eq:general}
and an additional error term 
that is the product of the regularity weight $\mathcal{K}$ with the energy defect, defined as the
difference of the physical energy $\mathcal{E}$ and the auxiliary
variable $E$.
In particular, any weak solution $U$ that satisfies the energy-dissipation inequality
can be identified with an energy-variational solution $(U,E)$ by setting $E=\E(U)$.
This also motivates to choose $\mathcal K$ minimal in a certain sense
since then~\eqref{envarintro} is close to a weak formulation of~\eqref{eq:general}.

Besides the existence results for general systems, 
this novel framework has also been used to identify singular
limits for fluid equations~\cite{envar,eiter2025viscoelastoplasticity}.
or the limit of solutions to an implementable numerical
schemes for the Ericksen--Leslie equations~\cite{Max}. 
Here, the
energy-variational solution for one choice of the regularity weight could
be shown to imply the existence of a measure-valued
solution, whereas another choice provided the convergence of solutions to a
structure-inheriting finite-element scheme.
Energy-variational solutions further come along with a relative-energy inequality,
that can be used to derive a weak-strong uniqueness principle~\cite{envarhyp,viscoenvar}
or to quantify the long-time behavior~\cite{eiterschindler2025timeasymptoticselfsimilarity}
of energy-variational solutions in certain settings.
Moreover, in a recent preprint~\cite{RobVari}, it was shown that
energy-variational framework can also be applied to
the geometric PDE for a 
Navier--Stokes two-phase flow model, and to the system of polyconvex elasticity. 
The energy-variational solutions were shown to be
stronger in comparison to the available varifold solutions
and Young measure-valued solutions, while significantly
reducing the degrees of freedom of the solution concept.

While a generalization of the solvability concept may yield a framework 
that simplifies to show existence results,
it enhances the non-uniqueness of solutions.
Starting with the seminal work by De Lellis and Székelyhidi 
in the context of the incompressible Euler equations~\cite{convexintegration},
the non-uniqueness of solutions was shown for the compressible Euler equations~\cite{Chiodaroli_2014}, the Euler--Korteweg system~\cite{Donatelli_2014}, the
Navier--Stokes equations~\cite{VladTristan} and other systems, using convex integration.
Moreover, the existence of multiple Leray--Hopf solutions to the Navier--Stokes equations
was shown for a particular external forcing~\cite{dallas}, based on a 
linear stability analysis around a special vortex
solution.
This approach was recently adapted for the unforced case
by developing a computer-assisted proof of the relevant
instability~\cite{houwangyang2025nonuniquenesslerayhopfsolutionsunforced}.

In order to counteract this inherent non-uniqueness in fluid dynamics, 
one can reduce the number of solutions by 
introducing suitable selection criteria,
for instance, by physically motivated principles.
This approach goes back to the seminal work by Dafermos~\cite{dafermos1973entropyratecriterion},
on the selection of weak solutions to hyperbolic conservation laws
by maximizing the entropy dissipation rate.
In the context of the Euler equations, 
there are selection criteria that single out solutions with maximal dissipation~\cite{breitfeireislhofmanova2020semiflowcompleteEuler, breitfeireslhofmanova2020semiflowisentropicEuler,maxdiss,selectEuler},
with minimal acceleration~\cite{West}, 
with maximal turbulence~\cite{EmilMaiximalTurb},
or subject to a least-action
principle~\cite{leastaction}.
In~\cite{envar} it was shown that energy-variational solutions
are also suitable to implement certain selection criteria in the context of incompressible fluids.
It was shown that one can select a solution with vanishing energy defect at finitely many prescribed points in time.
Alternatively, the selection by minimization of a strictly convex integral functional is not
only unique but also depends continuously on the data in certain scenarios.

One main novelty of the article at hand is a new existence result 
for energy-variational solutions to the evolution equation~\eqref{eq:general}
by generalizing previous works in several directions. 
We relax the assumptions on the
underlying space as well as on the energy $\E$, 
for which we merely assume a linear growth condition, 
in contrast to superlinear growth assumed in~\cite{viscoenvar}.
Moreover, we reduce the assumptions from~\cite{viscoenvar,envarhyp} 
on the regularity weight $\mathcal{K}$ from convexity to mere
weak* lower semicontinuity. 
This allows for the inclusion of lower order terms in $A$
that can be treated by compactness and strong convergence. 
For the existence proof,
we follow~\cite{envarhyp,viscoenvar,eiter2025viscoelastoplasticity} 
and construct solutions based on a minimizing-movements scheme reminiscent of the
usual Jordan--Kinderlehrer--Otto scheme for gradient flows~\cite{MielkeIntro}.
More precisely, the $n$-th iterate $U^n$ is determined from the previous iterate $U^{n-1}$ by
\begin{equation} \label{eq:discrete.intro}
  \mathcal{E}(U^n)+\tau\Psi^n(U^n)-\mathcal{E}(U^{n-1})-\langle U^n-U^{n-1},\Phi\rangle -\tau \langle A^n(U^n),\Phi \rangle\leq 0 
\end{equation}
for all suitable test functions $\Phi$.
Here $\Psi^n$ and $A^n$ are certain approximations of $\Psi$ and $A$.
The scheme~\eqref{eq:discrete.intro} corresponds to 
a fully implicit discretization of a weak formulation of~\eqref{eq:general}
combined with a discrete energy-dissipation inequality.
We construct the iterate $U^n$ by minimizing the left-hand side of~\eqref{eq:discrete.intro}
after taking the supremum over all admissible $\Phi$.
To ensure a convex-concave structure sufficient for its solvability,
we suitably restrict the class of test functions $\Phi$ in terms of 
another regularity weight $\tilde{\mathcal K}$.
This second regularity weight is only used on the discrete level,
and its effect disappears in the time-continuous limit.
In particular, the choice of $\tilde{\mathcal K}$ can be coarser than $\mathcal K$ in a certain sense
without having an impact on the definition of solutions through~\eqref{envarintro}.

Another main contribution of the present article is the development of different
selection criteria in this general setting of energy-variational solutions. 
We first show that for any countable, ordered sequence of time points in $[0,T]$,
we can construct an energy-variational solution with
vanishing energy defect at those times. 
This is an extension of a similar result from~\cite{envar}.
Moreover, we adapt a criterion from~\cite{selectEuler}, which was used in the context of dissipative solutions to the Euler equations.
We show that for any initial data there exists an energy-variational solution with an arbitrarily small energy defect,
that is, for all $\epsilon > 0$ there exists an energy-variational solution $(U,E)$ such that
\[
\mathcal{E}(U) \leq E \leq \mathcal{E}(U) + \epsilon \quad \text{in } [0,T].
\]
This means in a certain way
that one can find energy-variational solutions that are arbitrarily close 
to a possible weak solution.
As a third criterion, we show that we can select an energy-variational solution that minimizes a given functional among all solutions.
Under additional strict convexity assumptions,
this minimizer is even unique.
A typical example for such a functional would be the $L^1(0,T)$-norm of $E$
in order to minimize the effect of the auxiliary variable
and to maximize the energy dissipation in a certain sense.
Similarly, measure-valued solutions with maximal turbulence
have been identified by maximization of concave functionals in~\cite{EmilMaiximalTurb}.
A selection process by successively minimizing a countable family of convex functionals 
was used in~\cite{breitfeireislhofmanova2020semiflowcompleteEuler,breitfeireslhofmanova2020semiflowisentropicEuler,selectEuler} 
to identify solutions to the Euler equations with maximal dissipation.
Here, the exact energy functionals are not specified
while the selected solution strongly depends on this choice.
In particular, it remains open which choice would lead to a physically reasonable solution.

We exemplify the advantages of the newly developed framework
by concrete PDEs from fluid mechanics. 
At first, we consider the Euler--Korteweg system
for inviscid compressible flow with capillary effects.
The local-in-time existence of strong solutions to this system was established
in~\cite{benzonigavagedanchindescombes2007eulerkorteweg}.
We refer to~\cite{audiardhaspot2017eulerkorteweg} 
for a global-in-time existence result for small data.
Similarly to the Euler equations, 
the Euler--Korteweg equations admit non-unique weak solutions for certain initial data~\cite{Donatelli_2014},
whereas weak solutions coincide with the unique strong solution if the latter exists~\cite{giesselmannlattanziotzavaras2017eulerkorteweg}.
By realizing the Euler--Korteweg system in the setting of energy-variational solutions, this work establishes the first solvability framework
that gives the global-in-time existence for general initial data.
Here, we benefit from the consideration of 
two different regularity weights $\mathcal K$ and $\tilde{\mathcal K}$.
Since the energy functional associated with the Euler--Korteweg system 
contains gradient terms,
the natural \textit{a priori} bounds yield strong compactness 
that can be exploited when passing with the discretized solution to the weak* limit.
In contrast, to ensure existence of the discretized solution constructed by~\eqref{eq:discrete.intro},
the necessary convexity requires a larger regularity weight 
$\tilde{\mathcal K}$.

The second example
illustrates the advantage 
of allowing for a more general functional framework and 
for energies that are homogeneous of degree one.
Such features are usually necessary to treat geometric PDEs, 
and we establish an energy-variational framework for the equations for the binormal curvature flow,
which models the evolution of vortex filaments.
While the quality of the existence results for such geometric PDEs is similar 
to the Euler and Euler--Korteweg equations,
the employed concepts are very different. 
For instance, for the binormal curvature flow, there are results 
on the existence of smooth solutions locally in time~\cite{binormCurveSol},
on the global existence of generalized solutions~\cite[Thm.~1]{BNCF1}, 
and on weak-strong uniqueness~\cite[Thm.~2]{BNCF1},
and the state of the art for mean-curvature flow is similar~\cite{MCF}. 
However, generalized solutions in this area differ
from the concepts in fluid dynamics, as \emph{varifold solutions} are usually considered. 
Roughly speaking, these describe the
limits of evolving manifolds and their tangent spaces.
Therefore, the energy-variational framework presented here may be considered the 
first overarching solution concept that combines the two areas of classical fluid dynamics and geometric PDEs.
Since this allows for the consideration of existence results 
and selection criteria in a unified context,
the energy-variational structure might also be beneficial 
for studying the connection
between fluid flows and the movement of vortex filaments~\cite{BNCF2}
and the related singular limits. 
This, however, remains part of future research.

\paragraph{Outline}
The structure of this article is as follows. 
In Section~\ref{sec:abstract}
we introduce the notion of energy-variational solutions to the 
abstract evolutionary problem~\eqref{eq:general}.
After presenting a number of properties for such solution, 
we provide a general existence result
as well as suitable selection criteria.
In Section~\ref{sec:eulerkorteweg} and Section~\ref{sec:binormal}, 
we study the Euler--Korteweg system and the equation for binormal curvature flow, respectively.
We introduce associated notions of energy-variational solutions
and establish their existence by 
verifying sufficient conditions developed in the abstract framework.

\paragraph{Notation} 
Let $\mathbbm X$ be a Banach space and $\E\colon \mathbbm X\to\R\cup\{\infty\}$ be a functional.
We let $\operatorname{dom}\mathcal{E}:=\{U \in \mathbbm X \;|\;\E(U)<\infty\}$
denote the domain of $\E$. 
If $\E$ is convex, we write $\partial\E$ for its subdifferential with domain $\dom\partial\E$, 
and $\E^*\colon\mathbbm X^*\to\R\cup\{\infty\}$ for the convex conjugate of $\E$. 
The convex indicator function of a set $M \subset \mathbbm X$ is denoted by $\iota_M:\mathbbm X\to \{0,\infty\}$ and satisfies  $\iota_M(x)=0$ if $x \in M$, $\iota_M(x)=+\infty$ if $x \notin M$.

Let $T>0$ and $\mathbbm X$ be the dual of a Banach space.
The space $\mathcal{C}_{w*}([0,T];\mathbbm X)$ contains all weakly* continuous functions $f:[0,T]\to \mathbbm X$.
The space of weakly* measurable functions $f:[0,T]\to \mathbbm X$ that are essentially bounded  
is denoted $L^\infty_{w*}([0,T];\mathbbm X)$.
The total variation of a function $f:[0,T]\to \mathbbm X$ is defined by
\[
|f|_{\mathrm{TV}([0,T];\mathbbm X)}=\sup_{0=t_0< t_1<\ldots<t_N=T}\sum_{i=0}^{N-1}\|f(t_{i+1})-f(t_i)\|_{\mathbbm X}.
\]
The space $\mathrm{BV}([0,T];\mathbbm X)$  of functions of bounded variation
consists of all such functions $f$ such that 
$\|f\|_{\mathrm{BV}([0,T];\mathbbm X)}=\|f\|_{L^\infty_{w*}(0,T;\mathbbm X)} + |f|_{\mathrm{TV}([0,T];\mathbbm X)}< \infty$.
 
The Frobenius product of two matrices $A,B \in \R^{m \times d}$, $d\in\N$, is represented by $A:B:=A_{ij}B_{ij}$, where we implicitly sum over repeated indices.
The symbol $(A)_{\text{sym}}=\frac{1}{2}(A+A^T)$ denotes the symmetric part of $A \in \R^{d\times d}$, and $(A)_{\text{sym},-}$ and $(A)_{\text{sym},+}$ denote the negative semi-definite and positive semi-definite parts of the symmetric matrix $(A)_{\text{sym}}$, respectively. 
By $\mathbbm{I}_d \in \R^{d\times d}$ we denote the identity matrix.
We write $f_+(x)=\max\{f(x),0\}$ and $f_-(x)=\max\{-f(x),0\}$
for the positive and negative part of a function $f:X\to \R$, defined on a set $X$.
Moreover, $\mathbbm 1_M\colon X\to\{0,1\}$ denotes the characteristic function of a set $M\subset X$,
defined by $\mathbbm 1_M(x)=1$ if $x\in M$, and $\mathbbm 1_M(x)=0$ if $x\not\in M$.

If $M\subset\R^d$, $d\in\N$, is open or closed, we write $\C(M)$ and $\C^k(M)$
for the continuous and the $k$-times continuous differentiable functions on $M$ for $k\in\N\cup\{0\}$,
and $\C_c(M)$ and $\C_c^k(M)$ are the respective subspaces of compactly supported functions.
The spaces $\C_0(\R^d)$ and $\C^k_0(\R^d)$ are defined as the closure of $\C^\infty_c(\R^d)$
with respect to the norms
\[
\|f\|_{\C_0(\R^d)}=\sup_{x\in\R^d}|f(x)|,
\qquad
\|f\|_{\C_0^k(\R^d)}=\sum_{|\alpha|\leq k}\|\partial^\alpha f(x)\|_{\C_0(\R^d)},
\]
respectively.
Then the dual space of $\C_0(\R^d)$ can be identified with the 
class of finite Radon measures $\mathcal M(\R^d)$
with the norm given by the total variation
\[
\|\mu\|_{\mathcal M(\R^d)}=\int_{\R^d}\dd{|\mu|}=|\mu|(\R^d),
\]
where the measure $|\mu|$ is the variation of $\mu\in\mathcal M(\R^d)$.
For Lebesgue and Sobolev spaces on a domain $\Omega$, we use the standard notation $L^p(\Omega)$, $W^{k,p}(\Omega)$ and $H^k(\Omega)=W^{k,2}(\Omega)$ 
for $p\in[1,\infty]$ and $k\in\N$.
If $\Omega$ is a bounded domain, we denote the subspaces of $H^1(\Omega)$ and $\C^1(\overline{\Omega})$ 
consisting of elements with vanishing mean by $H_{(0)}^1(\Omega)$ and $\mathcal{C}^1_{(0)}(\overline{\Omega})$. We write $\C(M;\R^d)$, $\C_0(\R^d;\R^d)$, $L^p(\Omega;\R^d)$, \textit{etc.}~for the corresponding classes of vector-valued functions.

\section{Energy-variational solutions in an abstract setting}
\label{sec:abstract}

For the introduction of the concept of energy-variational solutions
to the abstract evolution equation~\eqref{eq:general},
we first collect general assumptions on the relevant functionals and operators.
We derive a series of basic properties of energy-variational solutions.
Subsequently, we address the main results in this framework: 
a general existence result in Subsection~\ref{subsec:existence},
and suitable criteria
to select specific candidates within the solution set
in Subsection~\ref{subsec:selection}.

\subsection{Assumptions}\label{section assump}
In order to carry out the analysis of the abstract evolution equation~\eqref{eq:general}, we introduce the following set of structural and technical assumptions:
\begin{enumerate}[label=({A\arabic*})]
    \item \label{space as}$\mathbbm{Y}, \mathbbm{V}$ are separable Banach spaces 
    with a dense and compact embedding $\mathbbm{Y}\hookrightarrow \mathbbm{V}$.  
    \item\label{energy as} The energy functional $\mathcal{E}:\mathbbm{V}^*\rightarrow [0,\infty]$ is convex and weakly* lower semicontinuous, and it is coercive in the sense that there are $\alpha,\beta\geq 0$ with
    \begin{equation}\label{eq:coercive}
     \forall U \in \V^*: \quad\E(U) \geq \alpha\|U\|_{\mathbbm{V}^*}-\beta.
    \end{equation}
    \item  \label{ass energy bal} 
    The dissipation potential $\Psi:[0,T]\times \mathbbm{V}^*\rightarrow \R \cup \{+\infty\}$ 
    and the operator $A:[0,T]\times (\text{dom}\, \E \cap \text{dom}\, \Psi) \rightarrow \mathbbm{Y}^*$, where we set $\dom \Psi:= \bigcap_{t \in [0,T]}\dom \Psi (t,\cdot)$,  
    are both measurable with respect to the first variable and satisfy the equality
    \begin{equation}\label{eq:dissipation.operator}
    \forall \Phi \in  \Y\cap\dom\partial\E^*\quad \exists \zeta \in \partial\E^*(\Phi)\cap\dom\Psi \quad \forall t \in [0,T]: \quad \langle A(t, \zeta),\Phi \rangle = \Psi(t,\zeta),
    \end{equation}

    \item \label{ass:K.lsc}
    There exists a regularity weight $\mathcal{K}:\mathbbm{Y}\rightarrow [0,\infty)$ that is continuous 
    and maps bounded sets to bounded sets,
    and such that the mapping
    \begin{equation*}
        U \mapsto \Psi(t,U) - \langle A(t,U),\Phi \rangle +\mathcal{K}(\Phi)\mathcal{E}(U)
    \end{equation*}
    is weakly* lower semicontinuous on $\text{dom}\, \E \cap \text{dom}\, \Psi$ for all $\Phi \in \mathbbm{Y}$ and a.a.~$t \in [0,T]$ .
    
    \item \label{ass:K.conv}
    There exists an auxiliary regularity weight $\tilde{\mathcal{K}}:\mathbbm{Y}\rightarrow [0,\infty)$ that is convex, continuous, and maps bounded sets to bounded sets. Moreover, the mapping
    \begin{equation*}
    U\mapsto \Psi(t,U) - \langle A(t,U),\Phi \rangle +\tilde{\mathcal{K}}(\Phi)\mathcal{E}(U)
    \end{equation*}
is convex and weakly* lower semicontinuous on $\text{dom}\, \E \cap \text{dom}\, \Psi$ for all $\Phi \in \mathbbm{Y}$ and a.a.~$t \in [0,T]$. Furthermore, for all $\Phi \in \Y$ there exists a constant $c(\Phi)\geq 0$ such that  
\begin{equation}\label{eq:ass.K.lowerbound}
\Psi(t,U) - \langle A(t,U),\Phi \rangle +\tilde{\mathcal{K}}(\Phi)\mathcal{E}(U) \geq -c(\Phi)(\E(U)+1)
\end{equation}
for all $U \in \V^*$ and a.a.~$t\in(0,T)$. Moreover, for $\Phi \in \Y$ with $\|\Phi\|_\Y\leq R$, where $R>0$, the constant $c(\Phi)$ only depends on $R$ and is idenpendent of $\Phi$.
\end{enumerate}

The space $\Y$ introduced in \ref{space as} serves as the class of test functions, and the dual space $\V^*$ of $\V$ is the state space. Observe that the dense embedding $\Y \hookrightarrow \V$ implies the (dense) embedding $\V^*\hookrightarrow\Y^*$,
and the compactness of one of the embeddings implies 
the compactness of the other by Schauder's theorem.
Moreover, separability ensures that bounded sets in $\V^*$ are sequentially weakly* compact
by the sequential version of the Banach--Alaoglu theorem.

The convexity and the lower semicontinuity from~\ref{energy as}
imply weak* lower semicontinuity of the energy functional $\E$,
and the coercivity assumption ensures that 
the sublevel sets of $\E$ are sequentially weakly* compact by the Banach--Alaoglu theorem. 
This allows to conclude the existence of convergent subsequences from \textit{a priori} bounds.

The assumption~\ref{ass energy bal}
ensures that the dissipation potential $\Psi$ 
is consistent with the operator $A$.
Indeed, consider the regular case where we have the Fenchel equivalence, so that $\zeta\in\partial\E^*(\Phi)$
if and only if $\Phi\in\partial\E(\zeta)$.
Then we can formally test~\eqref{gen eq}
with an element $\Phi(t)\in\partial\E(U(t))$,
and obtain 
\[
0=\big\langle\partial_t U(t) + A(t,U(t)),\Phi(t)\big\rangle 
= \frac{\dd}{\dd t} \E(U(t))+\Psi(t,U(t)),
\]
which gives the energy-dissipation law~\eqref{eq:energy.dissipation.intro}.

Note that for the intersections $\Y\cap\dom\partial\E^*$ and $\partial\E^*(\Phi)\cap\dom\Psi$
we tacitly use the natural embeddings $\V\hookrightarrow\V^{**}$ 
and $\V^*\hookrightarrow\V^{***}$. 
More precisely, $\Phi\in\Y\cap\dom\partial\E^*$ means that 
$\Phi\in\Y\hookrightarrow\V$ and that the 
natural representative of $\Phi$ in $\V^{**}$ is an element of $\dom\partial\E^*$.
Similarly, $\partial\E^*(\Phi)\cap\dom(\Psi)$ has to be understood.

The regularity weights $\mathcal K$ and $\tilde{\mathcal K}$
from~\ref{ass:K.lsc} and~\ref{ass:K.conv}
are fundamental for our approach, but play different roles:
The weak* lower semicontinuity from assumption~\ref{ass:K.lsc} ensures that the solution
concept is stable under weak* convergence.
In particular, it ensures that solutions can be approximated within the weak* topology in $\V^*$,
where convergence can be derived from a piori bounds.
The regularity weight $\tilde{\mathcal K}$ is of a more technical nature. 
The convexity from~\ref{ass:K.conv} ensures that the approximate solutions can be constructed
by solving certain saddle-point problems.
If solutions are approximated differently, for instance, by a space-time discretization
or a vanishing-viscosity approach, the assumption~\ref{ass:K.conv} may be obsolete.

The lower bound~\eqref{eq:ass.K.lowerbound} is needed for technical reasons.
For instance, as $\Psi$ does not have a sign in general, 
it ensures that the energy-dissipation inequality ensures integrability of $\Psi$,
see Proposition~\ref{prop:aprioribounds} below.
Instead of using the energy,
one can also derive an affine lower bound of the left-hand side of~\eqref{eq:ass.K.lowerbound},
which would be natural if $A$ contains terms linear in $\Phi$.
By the coercivity estimate~\eqref{eq:coercive}, such a bound implies~\eqref{eq:ass.K.lowerbound}.

The assumption that $\mathcal K$ and $\tilde{\mathcal K}$ map bounded sets to bounded sets 
is merely technical.
It is trivially satisfied by all examples presented later
since both functionals
are always dominated by a multiple of the norm $\|\cdot\|_{\Y}$ in these cases.

At first sight, the assumption~\ref{ass:K.lsc} seems redundant 
since one can simply choose $\mathcal K=\tilde{\mathcal K}$ if~\ref{ass:K.conv} is satisfied.
However, as $\mathcal K$ appears in the energy-variational formulation~\eqref{envarintro} (compare also Definition~\ref{sol def} below),
it measures to a certain extent how far the energy-variational solution deviates from a weak solution. 
This motivates choosing $\mathcal K$ as small as possible.
The possibility of using two different regularity weights is one of the novelties of this paper,
and it has recently been adapted to treat a visco-elasto-plasticity model~\cite{eiter2025viscoelastoplasticity},
while in other contexts, they coincide~\cite{viscoenvar,envarhyp}.

\subsection{Definition of energy-variational solutions}

Now we can formally introduce energy-variational solutions to
the abstract problem~\eqref{eq:general}.

\begin{definition}\label{sol def}
Let $U_0\in\V^*$ with $\E(U_0)<\infty$.
 The pair $(U,E)$ with
 \[
 U \in L^\infty_{w^*}(0,T;\mathbbm{V}^*) \cap \mathrm{BV}([0,T]; \Y^*),
 \qquad
 E\in\mathrm{BV}([0,T])
 \]
 is called an \textit{energy-variational solution} to \eqref{eq:general} if $\mathcal{E}(U(t)) \leq E(t)$ for almost all  $t \in (0,T)$ and if
\begin{equation}\label{envar def1}
[ E-\langle U, \Phi \rangle ]  \Big|^t_s +\int_s^t \langle U , \partial_t \Phi \rangle- \langle A(\tau,U), \Phi\rangle +\Psi(\tau,U) +\mathcal{K}(\Phi)[\mathcal{E}(U)-E] \dd{\tau} \leq 0
\end{equation}   
for all $\Phi \in \mathcal{C}^1([0,T];\mathbbm{Y}\cap\dom\partial\E^*)$ and almost all $s < t \in [0,T]$, including $s=0$ for which $U(0)=U_0$.
\end{definition}

\begin{remark}
If $(U,E)$ is an energy-variational solution with $E(t)=\E(U(t))$,
then~\eqref{envar def1} simplifies to
\[
\E(U)\Big|^t_s + \int_s^t\Psi(\tau,U)\de\tau
-\langle U, \Phi \rangle \Big|^t_s +\int_s^t \langle U , \partial_t \Phi \rangle- \langle A(\tau,U), \Phi\rangle \de{\tau} \leq 0,
\]  
which is the combination of an energy-dissipation inequality
with a weak formulation of~\eqref{gen eq 1}.
In particular, any weak solution satisfying the energy-dissipation balance
is an energy-variational solution.
\end{remark}

\begin{remark}
    \label{rem:testfunctions}
    In Definition~\ref{sol def}
    we only consider test functions taking values in $\Y\cap\dom\partial\E^*$.
    Here we tacitly use the embedding $\Y\hookrightarrow\V\hookrightarrow\V^{**}$, and
    the intersection has to be understood as in~\eqref{eq:dissipation.operator}.
    We see in Proposition~\ref{prop:Estar} below that
    the domain $\dom \partial\E^*$ contains at least a ball around zero,
    so that the class of test functions is sufficiently rich.
    We further show if $\E$ has superlinear growth, then we even have $\dom\E=\dom \partial\E^*=\V^{**}$, 
    and test functions from the whole linear space 
    $\mathcal{C}^1([0,T];\mathbbm{Y}\cap\dom\partial\E^*)$
    are admissible.
\end{remark}

 \begin{remark}
     Since we do not assume separability of the space $\V^*$, 
     Pettis's theorem that identifies weak and strong measurability (see~\cite[Theorem 1.34]{roubicek2005nonlpde} for instance) does not apply in general.
     Therefore, it is necessary to consider solutions in a Bochner space
     of weakly* measurable functions, namely $L^\infty_{w*}([0,T];\V^*)$
     since it can be identified with the dual space of $L^1([0,T];\V)$ in this general setting,
     compare~\cite[Theorem 6.14]{pedregal1997parametrizedmeasures}.     
 \end{remark}   

\begin{remark}
    Observe that we do not specify an initial value for $E$ in Definition~\ref{sol def},
    which allows for certain flexibility and features like the semiflow property, see Proposition~\ref{prop p iv} below.
    However, in the main existence result from Theorem~\ref{main theorem1},
    we show that the choice $E(0+)=\E(U_0)$ is indeed possible,
    so that the energy defect vanishes initially.
\end{remark}

\subsection{General properties of energy-variational solutions}

We collect a number of properties, which follow from the definition of energy-variational solutions.
Let the assumptions \ref{space as}--\ref{ass:K.conv} be fulfilled for all the following propositions.

First of all, we show that $\dom\partial\E^*$ contains a neighborhood of zero.
In particular, the class of test functions 
in the energy-variational formulation~\eqref{envar def1}
is not too small and
contains at least a ball around zero.

\begin{proposition}
    \label{prop:Estar}
    The convex conjugate $\E^*$ of $\E$ satisfies 
    $\E^*\leq \iota_{\overline B_{\alpha}}+\beta$,
    where $\alpha,\beta\geq 0$ are the constants from~\ref{energy as} and $B_{\alpha}=\{\eta\in \V^{**}\mid \|\eta\|_{\V^{**}}< \alpha\}$.
    In particular, it holds $\overline{B}_{\alpha}\subset\dom\E^*$
    and $B_{\alpha}\subset\dom\partial\E^*$.
    Moreover, if $\E$ has superlinear growth,
    that is, 
    \begin{equation}\label{eq:superlinear}
    \lim_{\|U\|_{V^*}\to\infty}\frac{\E(U)}{\|U\|_{V^*}}=\infty,
    \end{equation}
    then $\dom\E^*=\dom\partial\E^*=\V^{**}$.
\end{proposition}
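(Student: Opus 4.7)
The plan is to extract both claims from the coercivity bound~\eqref{eq:coercive} by direct manipulation of the Fenchel conjugate. Since $\E^*(\eta)=\sup_{U\in\V^*}\bigl(\langle\eta,U\rangle-\E(U)\bigr)$ for $\eta\in\V^{**}$, substituting $\E(U)\geq\alpha\|U\|_{\V^*}-\beta$ gives
\begin{equation*}
\E^*(\eta)\leq \sup_{U\in\V^*}\bigl(\langle\eta,U\rangle-\alpha\|U\|_{\V^*}\bigr)+\beta.
\end{equation*}
When $\|\eta\|_{\V^{**}}\leq\alpha$, the duality estimate $\langle\eta,U\rangle\leq\|\eta\|_{\V^{**}}\|U\|_{\V^*}$ forces the expression inside the supremum to be nonpositive, with the value zero attained at $U=0$, yielding $\E^*(\eta)\leq\beta$. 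This establishes both $\E^*\leq\iota_{\overline{B}_\alpha}+\beta$ and the inclusion $\overline{B}_\alpha\subset\dom\E^*$.

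To upgrade $\overline{B}_\alpha\subset\dom\E^*$ to $B_\alpha\subset\dom\partial\E^*$, I would invoke the standard fact from convex analysis that a proper convex function which is lower semicontinuous and locally bounded above at a point is continuous there, and in particular has nonempty subdifferential at that point. As a Fenchel conjugate, $\E^*$ is convex and $\sigma(\V^{**},\V^*)$-lower semicontinuous, hence also norm-lower semicontinuous on $\V^{**}$. The uniform bound $\E^*\leq\beta$ on $\overline{B}_\alpha$ supplies the required local boundedness at every $\eta\in B_\alpha$, so that $\partial\E^*(\eta)$ is nonempty for each such $\eta$.

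Under the superlinear growth assumption~\eqref{eq:superlinear}, I would argue that for every fixed $\eta\in\V^{**}$ the linear term $\langle\eta,U\rangle$ is dominated by $\E(U)$ as $\|U\|_{\V^*}\to\infty$, so that $\langle\eta,U\rangle-\E(U)$ is bounded above on $\V^*$ and hence $\E^*(\eta)<\infty$. This gives $\dom\E^*=\V^{**}$. The classical consequence of Baire's theorem that a proper convex lower semicontinuous function which is finite on an entire Banach space is automatically continuous then applies to $\E^*$ on $\V^{**}$, making $\E^*$ subdifferentiable everywhere; hence $\dom\partial\E^*=\V^{**}$ as well. The main technical point is the appeal to this infinite-dimensional subdifferentiability result, which I would handle by citing a standard reference on convex analysis; the remainder of the argument is an elementary estimation of the convex conjugate.
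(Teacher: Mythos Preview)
Your argument is correct and, for the first two assertions, essentially identical to the paper's: both bound $\E^*$ via the coercivity estimate and then use that a convex function bounded above on a ball is subdifferentiable on its interior.

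For the superlinear case, however, the paper takes a shorter route that avoids the Baire-category machinery you invoke. Rather than arguing separately that $\E^*$ is everywhere finite and then appealing to automatic continuity of convex functions on a Banach space, the paper simply observes that superlinear growth means the coercivity bound~\eqref{eq:coercive} holds for \emph{every} $\alpha>0$ (with an $\alpha$-dependent $\beta$), so the already-established inclusion $B_\alpha\subset\dom\partial\E^*$ applies for all $\alpha$, and taking the union over $\alpha$ gives $\dom\partial\E^*=\V^{**}$ directly. Your argument is valid but uses a heavier tool; the paper's approach recycles the first part of the proof and is more self-contained.
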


\begin{proof}
    From the definition of the convex conjugate and from~\ref{energy as}, 
    we infer
    \[
    \E^*(\eta)
    \leq \sup_{U\in\V^*}\langle U,\eta\rangle - \alpha\|U\|_{V^{*}} + \beta
    \leq \sup_{U\in\V^*} \big[\|\eta\|_{V^{**}}-\alpha\big]\|U\|_{V^{*}}+\beta
    =\iota_{B_{\alpha}}(\eta)+\beta.
    \]
    This further implies $\E^*(\eta)<\infty$ for $\eta\in \overline B_{\alpha}$, that is, $\dom\E^*\subset \overline B_{\alpha}$,
    which also yields $\dom\partial\E^*\subset B_{\alpha}$
    as $\dom\partial\E^*$ is dense in $\dom\E^*$ and convex.
    
    Now let $\E$ satisfy~\eqref{eq:superlinear}.
    Then for every $\alpha>0$ there exists $\beta>0$ such that~\eqref{eq:coercive} holds.
    Hence, by the previous argument we have $B_\alpha\subset\dom\partial\E^*\subset\dom\E^*$
    for any $\alpha>0$, so that $\dom\partial\E^*=\dom\E^*=\V^{**}$.
\end{proof}

We can now show that the function class specified in Definition~\ref{sol def} 
is natural for energy-variational solutions, 
and that we can derive associated \textit{a priori} bounds.

\begin{proposition}
    \label{prop:aprioribounds}
    Let $(U,E)$ be an energy-variational solution with $E(0+)\leq M$. 
    Then there exists a constant $C(M)>0$, independent of $(U,E)$, such that
    \begin{equation}
    \norm{U}_{L^\infty_{w^*}(0,T;\mathbbm{V}^*)}
    +\norm{U}_{\mathrm{BV}([0,T]; \Y^*)}
    +\norm{E}_{\mathrm{BV}([0,T])}
    +\norm{\Psi(\cdot,U)}_{L^1(0,T)}
    \leq C(M).
    \end{equation}
\end{proposition}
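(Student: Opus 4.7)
The plan is to extract all four bounds from the single energy-variational inequality \eqref{envar def1}, used with only two admissible constant-in-time test functions: $\Phi \equiv 0$ and $\Phi \equiv \pm \phi$ for small $\phi \in \mathbbm{Y}$. Both choices are admissible: by Proposition~\ref{prop:Estar} we have $B_\alpha \subset \dom \partial \mathcal{E}^*$, so $0$ is admissible, and any $\phi \in \mathbbm{Y}$ with $\|\phi\|_{\mathbbm{Y}}$ small enough (so that $\|\phi\|_{\mathbbm{V}} < \alpha$ via the continuous embedding $\mathbbm{Y} \hookrightarrow \mathbbm{V}$) lies in $\mathbbm{Y} \cap \dom \partial \mathcal{E}^*$.

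First, I plug $\Phi \equiv 0$ into \eqref{envar def1} to obtain $E(t) - E(s) + \int_s^t [\Psi(\tau,U) + \mathcal{K}(0)(\mathcal{E}(U) - E)] \dd\tau \leq 0$ for a.a.\ $s<t$. The lower bound \eqref{eq:ass.K.lowerbound} with $\Phi = 0$ gives $\Psi(\tau,U) \geq -(c(0) + \tilde{\mathcal{K}}(0))\mathcal{E}(U) - c(0)$, and combined with $\mathcal{E}(U) \leq E$ this yields $E(t) \leq E(s) + C_1 \int_s^t E(\tau)\dd\tau + C_1(t-s)$. Taking $s \searrow 0$ using the right limit $E(0+) \leq M$ and applying Grönwall's lemma gives $\|E\|_{L^\infty(0,T)} \leq C(M)$. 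The coercivity estimate \eqref{eq:coercive} then yields $\|U\|_{L^\infty_{w*}(0,T;\mathbbm{V}^*)} \leq C(M)$, and reinserting into the $\Phi=0$ inequality with $s=0$, $t=T$, together with the $\Psi$ lower bound, produces the $L^1$ bound $\|\Psi(\cdot,U)\|_{L^1(0,T)} \leq C(M)$.

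To bound $\|E\|_{\mathrm{BV}}$, I introduce the absolutely continuous function $f(t) := -\int_0^t [\Psi(\tau,U) + \mathcal{K}(0)(\mathcal{E}(U)-E)] \dd\tau$, which has total variation bounded by $\|\Psi(\cdot,U)\|_{L^1} + \mathcal{K}(0) \, T \, \|E\|_{L^\infty} \leq C(M)$. By the $\Phi=0$ inequality, $E - f$ is non-increasing on $[0,T]$, hence its total variation equals $(E-f)(0+) - (E-f)(T) \leq M + \|f\|_{L^\infty} \leq C(M)$. The triangle inequality $|E|_{\mathrm{TV}} \leq |E-f|_{\mathrm{TV}} + |f|_{\mathrm{TV}}$ then yields the $\mathrm{BV}$ bound on $E$.

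Finally, for the $\mathrm{BV}$ bound on $U$ in $\mathbbm{Y}^*$, I fix $R>0$ small enough that every $\phi \in \mathbbm{Y}$ with $\|\phi\|_{\mathbbm{Y}} \leq R$ satisfies $\|\phi\|_{\mathbbm{V}} < \alpha$, and plug $\Phi \equiv \pm\phi$ into \eqref{envar def1}. Using \eqref{eq:ass.K.lowerbound}, the already established bounds on $\mathcal{E}(U)$ and $E$, and the fact that $c(\phi)$, $\tilde{\mathcal{K}}(\phi)$, $\mathcal{K}(\phi)$ are all bounded by a constant $C(R)$ on $\{\|\phi\|_{\mathbbm{Y}} \leq R\}$ by assumptions~\ref{ass:K.lsc} and~\ref{ass:K.conv}, I obtain the two-sided estimate $|\langle U(t) - U(s), \phi\rangle| \leq |E(t) - E(s)| + C(R)(t-s)$. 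Taking the supremum over $\|\phi\|_{\mathbbm{Y}} \leq R$ gives $\|U(t) - U(s)\|_{\mathbbm{Y}^*} \leq C(R)^{-1}[|E(t)-E(s)| + C(R)(t-s)]$, which, summed over any partition $0 = t_0 < \dots < t_N = T$, yields $|U|_{\mathrm{TV}([0,T];\mathbbm{Y}^*)} \leq C \, |E|_{\mathrm{TV}} + C \, T \leq C(M)$ by the previous step.

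The main obstacle I anticipate is bookkeeping the admissibility of test functions (the interplay between the $\mathbbm{Y}$-, $\mathbbm{V}$-, and $\mathbbm{V}^{**}$-norms through Proposition~\ref{prop:Estar}) and carefully combining the lower bound \eqref{eq:ass.K.lowerbound} with the sign indeterminacy of $\Psi$; once $E$ is controlled, the remaining bounds follow by chaining the inequalities.
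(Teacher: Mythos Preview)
Your proof is correct and follows essentially the same approach as the paper's: test with $\Phi\equiv 0$ to obtain a Gr\"onwall estimate for $\|E\|_{L^\infty}$, decompose $E$ as the sum of a nonincreasing function (your $E-f$, the paper's $h$) and an absolutely continuous function with $L^1$ derivative (your $f$, the paper's $-g$) to bound $|E|_{\mathrm{TV}}$, and finally test with small $\Phi$ in a ball contained in $\dom\partial\mathcal{E}^*$ to control $|U|_{\mathrm{TV}([0,T];\mathbbm{Y}^*)}$ via $|E|_{\mathrm{TV}}$. The only cosmetic differences are the order in which you extract the $L^1$ bound on $\Psi$ (before rather than during the $\mathrm{BV}$ argument) and the minor slip writing $C(R)^{-1}$ where $R^{-1}$ is meant; neither affects correctness.
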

\begin{proof}
    Plugging $\Phi =0$ into the inequality \eqref{envar def1}, we obtain 
        \begin{align}\label{tgg}
        E\Big|_s^t + \int_s^t \Psi(\tau,U) \dd{\tau} 
        &\leq \int_s^t \mathcal{K}(0)[E(\tau)-\E(U)]\dd{\tau} 
        \leq \int_s^t\mathcal{K}(0)E(\tau)\dd{\tau} 
        \end{align}
       for almost all $0\leq s \leq t \leq T$.
        Since $\Psi(\tau,U) \geq -c(0)(\E(U)+1)-\tilde{\mathcal{K}}(0)\E(U)$ by assumption \ref{ass:K.conv}, inequality \eqref{tgg} this yields 
       \[
        E\Big|_s^t
        \leq \int_s^t [\mathcal{K}(0)+c(0)+\tilde{\mathcal{K}}(0)]E+c(0)\dd{\tau}.
        \]
       We can apply Gronwall's inequality to obtain
$\|E\|_{L^\infty(0,T)}\leq C$. 

Next, we show that $\|E\|_{\mathrm{BV}([0,T])}\leq C$. 
Let us define $h:[0,T]\to \mathbbm{R}$ as 
\[
h(t):=E(t) +\int_0^t \Psi(\tau,U) +\mathcal{K}(0)[\mathcal{E}(U)-E]\dd{\tau}.
\]
Due to~\eqref{tgg},
$h$ is non-increasing, so that $|h|_{\mathrm{TV}([0,T])}=|h(0)-h(T)|\leq h(0)=E(0)$.
We further define $g=h-E$.
On the one hand, with the inequality \eqref{tgg} we estimate $g$ from above as
\begin{align}\label{psi upper}
   g(t)= \int_0^t \Psi(\tau,U) +\mathcal{K}(0)[\E(U) -E]\dd{\tau} \leq -E(t)+E(0)
    \leq 2\|E\|_{L^\infty(0,T)}\leq C.
\end{align}
On the other hand, by assumption \ref{ass:K.conv} we can estimate $\tilde{g}(\tau):= \Psi(\tau,U(\tau)) +\mathcal{K}(0)[\mathcal{E}(U(\tau))-E(\tau)]$ from below by
\[
\begin{aligned}
  \tilde{g}(\tau)&= \Psi(\tau,U(\tau)) +\mathcal{K}(0)[\E(U(\tau)) -E(\tau)]
  \\
  &\geq  -c(0)(\E(U(\tau))+1)+\mathcal{K}(0)[\E(U(\tau))-E(\tau)]-\tilde{\mathcal{K}}(0)\E(U(\tau))
  \geq C_1
   \end{aligned}
\]
due to $\|E\|_{L^\infty(0,T)}\leq C$.
Hence, $\tilde{g} \in L^1(0,T)$, and $g$ is absolutely continuous, and as such $g \in \mathrm{BV}([0,T])$.
We further conclude $\|g\|_{\mathrm{BV}([0,T])}\leq C$, 
and $E=h-g$ implies 
$\|E\|_{\mathrm{BV}([0,T])}\leq C$.
Moreover, the asserted bound on $\Psi(\cdot,U)$ follows from the bounds on $\tilde g$. 

   It remains to show the bounds on $U$.
   From $\E(U(t)) \leq E(t)$, we conclude $ \|\E(U)\|_{L^\infty(0,T)} \leq C$. Combined with coercivity of $\E$ (assumption \ref{energy as}), this implies $\|U\|_{L^\infty_{w^*}(0,T;\V^*)}\leq C$.
    Moreover, rewriting the energy-variational inequality \eqref{envar def1} and adding and subtracting the term $\tilde{\mathcal{K}}(\Phi)\E(U)$, we get 
        \begin{align*}
            -\langle U(t)-U(s),\Phi\rangle
            &\leq -\int_s^t \Psi(\tau,U)-\langle A(\tau,U), \Phi \rangle + \tilde{\mathcal{K}}(\Phi)\E(U)\dd{\tau}\\
            &\qquad
            -(E(t)-E(s))+\int_s^t [\tilde{\mathcal{K}}(\Phi)-\mathcal{K}(\Phi)]\E(U)+\mathcal{K}(\Phi)E \dd{\tau}
        \end{align*}
        for any $\Phi\in\Y\cap\dom\partial\E^*$ and a.a.~$s<t$, $s,t\in[0,T]$.
        By Proposition~\ref{prop:Estar}, this includes all $\Phi\in\Y$ with 
        $\|\Phi\|_{\Y}<\alpha$, where $\alpha\geq0$ is the constant from~\ref{energy as}.
       By assumption \ref{ass:K.conv}, it holds 
        \[
        \inf_{\|\Phi\|_\Y< \alpha}[\Psi(t,U)-\langle A(t,U),\Phi\rangle+\tilde{\mathcal{K}}(\Phi)\E(U)]\geq -c(\E(U)+1)).
         \]
        Using the identity $\|U\|_{\V^*}=\alpha^{-1}\sup_{\|\Phi\|_{\V}<\alpha}\langle U,\Phi\rangle$
        and the assumption that $\mathcal K$ and $\tilde{\mathcal K}$ map bounded sets to bounded sets, 
        we can estimate
        \begin{align*}
            \sup_{0=t_0< t_1<\ldots<t_N=T}\sum_{i=0}^{N-1}\|U(t_{i+1})-U(t_i)\|_{\Y^*}\leq   \int_0^Tc(\E(U)+1) + cE \dd{\tau}+\|E\|_{\mathrm{BV}[0,T]}\leq C.
        \end{align*}
        Together with $\|U\|_{L^\infty_{w^*}(0,T;\V^*)}\leq C$,
        this implies $\|U\|_{\mathrm{BV}([0,T];\Y^*)}\leq C$ and completes the proof.
\end{proof}

Next we show that the energy-variational formulation~\eqref{envar def1},
which holds pointwise a.e. in $[0,T]$,
can be replaced with a pointwise inequality.
If $\E$ has superlinear growth, without loss of generality, 
we may assume that $U$ is weakly* continuous in $\V^*$ (and thus, in $\Y^*$ as well).

\begin{proposition}\label{prop p v}
Let $U \in L^\infty_{w^*}(0,T;\mathbbm{V}^*) \cap \mathrm{BV}([0,T]; \Y^*)$, 
$E\in\mathrm{BV}([0,T])$ and $\Phi \in \mathcal{C}^1([0,T];\mathbbm{Y})$.
Then~\eqref{envar def1} holds for a.a.~$s<t\in[0,T]$ if and only if
\begin{equation}\label{eq:envar.limits.pm}
    [ E-\langle U, \Phi \rangle ]  \Big|^{t+}_{s-} +\int_s^t \langle U , \partial_t \Phi \rangle- \langle A(\tau,U), \Phi\rangle +\Psi(\tau,U) +\mathcal{K}(\Phi)[\mathcal{E}(U)-E] \dd{\tau} \leq 0
\end{equation}
holds for all $s\leq t\in[0,T]$.
Moreover, if $(U,E)$ is an energy-variational solution as in Definition \ref{envar def1},
and if $\E$ satisfies~\eqref{eq:superlinear},
then there exists $\tilde{U}\in \mathcal{C}_{w*}([0,T];\V^*)$ such that $U=\tilde{U}$ almost everywhere on $[0,T]$.
\end{proposition}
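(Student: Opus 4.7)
My plan is to split the proof along the two claims: an equivalence of the two inequality formulations, and an improvement of regularity under superlinear growth. Both rely on the fact that $U$ and $E$, being of bounded variation with values in $\Y^*$ and $\R$ respectively, admit well-defined one-sided limits at every point and have at most countably many discontinuities.

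For the forward direction of the equivalence, I would fix $s_0\leq t_0$ in $[0,T]$ and pick sequences $s_n\uparrow s_0$ with $s_n<s_0$ and $t_n\downarrow t_0$ with $t_n>t_0$ along which~\eqref{envar def1} holds; this is possible since only a null set of pairs is excluded. The BV property yields $U(s_n)\to U(s_0-)$ and $U(t_n)\to U(t_0+)$ in $\Y^*$ and analogously for $E$, while continuity of $\Phi$ in $\Y$ combined with boundedness of $U$ in $\Y^*$ makes the boundary pairings pass to the limit in the correct sense. The integral terms are controlled by dominated convergence using the $L^1$-integrability of every integrand from Proposition~\ref{prop:aprioribounds}. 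This yields~\eqref{eq:envar.limits.pm}. The reverse direction is immediate: at every point where both $U$ and $E$ are continuous --- the complement of an at-most-countable set, hence of full measure --- the one-sided limits coincide with the BV representative, so~\eqref{eq:envar.limits.pm} collapses to~\eqref{envar def1}.

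For the second claim I would use that by Proposition~\ref{prop:Estar}, superlinear growth of $\E$ gives $\dom\partial\E^*=\V^{**}$, so the admissible test space becomes the full linear space $\C^1([0,T];\Y)$. Applying the pointwise inequality~\eqref{eq:envar.limits.pm} at $s=t=t_0$ with a constant test function $\Phi\in\Y$ makes the integral term vanish and leaves the jump relation
\[
[E(t_0+)-E(t_0-)] - \langle U(t_0+)-U(t_0-),\Phi\rangle \leq 0.
\]
Replacing $\Phi$ by $\pm\lambda\Phi$ with arbitrary $\lambda>0$ and letting $\lambda\to\infty$ forces $\langle U(t_0+)-U(t_0-),\Phi\rangle=0$ for every $\Phi\in\Y$, i.e.~$U(t_0+)=U(t_0-)$ in $\Y^*$. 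I would then define $\tilde U(t):=U(t+)=U(t-)$ and verify that these one-sided limits lie in $\V^*$: along any $t_n\to t$ with $U(t_n)$ bounded in $\V^*$ (which holds for a.a.~$t_n$), the Banach--Alaoglu theorem yields a weakly* convergent subsequence in $\V^*$ whose limit must agree with the $\Y^*$-limit by density of $\Y$ in $\V$. Continuity of $\tilde U:[0,T]\to\Y^*$ follows from the absence of jumps in its BV representative, and boundedness in $\V^*$ upgrades this to weak* continuity in $\V^*$ via the same density argument on subsequential weak* limits. The equality $\tilde U=U$ almost everywhere is automatic since $U$ coincides with its one-sided limits outside the at-most-countable set of discontinuities.

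The main technical subtlety I expect is the careful interplay between the $\Y^*$-valued BV structure and the weakly* bounded $\V^*$-presentation of $U$: specifically, the argument that the one-sided limits $U(t\pm)$, which a priori only exist in $\Y^*$, canonically belong to $\V^*$, and that continuity in $\Y^*$ lifts to weak* continuity in $\V^*$. Once the pointwise formulation~\eqref{eq:envar.limits.pm} is available, the scaling argument ruling out jumps is routine, and the remaining work is essentially bookkeeping of limits in dual spaces.
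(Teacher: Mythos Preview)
Your proposal is correct and follows essentially the same route as the paper's proof: use the BV structure of $t\mapsto E(t)-\langle U(t),\Phi(t)\rangle$ to pass to one-sided limits for the equivalence, then set $s=t$ with constant $\Phi\in\Y$ (admissible by superlinear growth via Proposition~\ref{prop:Estar}) and scale to kill the jump in $\Y^*$, finally lifting weak* continuity from $\Y^*$ to $\V^*$ by Banach--Alaoglu and density. One minor point: in the forward direction you invoke Proposition~\ref{prop:aprioribounds} for the $L^1$-integrability of the integrand, but that proposition applies only to energy-variational solutions, whereas the first part of the statement is for arbitrary $U,E,\Phi$ with the stated regularity; the convergence of the integral term needs only that the integrand is in $L^1(0,T)$, which is implicit in the hypothesis that~\eqref{envar def1} makes sense.
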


\begin{proof}
The assumptions on $U$, $E$, $\Phi$ imply
$[t \mapsto E(t) - \langle U(t), \Phi(t) \rangle] \in \mathrm{BV}([0,T])$.
One-sided limits of this mapping exist. 
On the one hand, passing to such limits in~\eqref{envar def1} yields~\eqref{eq:envar.limits.pm}.
On the other hand, as these limits coincide with the function values a.e.,~\eqref{eq:envar.limits.pm} implies~\eqref{envar def1}.

In particular, if $(U,E)$ is an energy-variational solution, it satisfies~\eqref{eq:envar.limits.pm}.
Assuming superlinear growth of $\E$, we have $\dom\E^*=\dom\partial\E^*=\V^{**}$ by Proposition~\ref{prop:Estar},
so that the we can take arbitrary test functions in $\Phi\in \C^1([0,T];\Y)$ in~\eqref{eq:envar.limits.pm}.
Choosing $t=s$ and $\Phi \in \Y$ independent of time, yields 
\[
[ E-\langle U, \Phi \rangle ]  \Big|^{t+}_{t-} \leq 0 \quad \text{for all} \; t \in (0,T).
\]
From a standard variational argument (compare~\cite[Lemma 2.10]{envarhyp}),
it follows that $U(t+) = U(t-)$ in \( \Y^* \) for all \( t \in (0,T) \).
Therefore, the function $\tilde U$ defined by $\tilde U(t):=U(t+)$ in $\Y^*$
satisfies $\tilde{U}\in \mathcal{C}_{w^*}([0,T]; \Y^*) $,
and as $U\in\BV([0,T];\Y^*)$, we have that
$U=\tilde U$ a.e.~in $[0,T]$.
Moreover, for $\{t_n\}\subset[0,T]$ with $t_n \to t\in[0,T]$, we have 
$\|\tilde{U}(t_n)\|_{\V^*}\leq \| U\|_{L^\infty(0,T;\V^*)}$, so that 
there exists a (not relabeled) subsequence with
$\tilde{U}(t_n) \stackrel{*}{\rightharpoonup} \tilde{U}_{t}$ in $\V^*$
for some element $\tilde{U}_{t}\in\V^*$.
Since we also have $\tilde{U}(t_n) \stackrel{*}{\rightharpoonup} \tilde{U}(t)$ in $\mathbbm{Y}^* $, we infer $\tilde{U}_{t}=\tilde{U}(t)$ due to the dense embedding $\Y \hookrightarrow\V$,
so that $\tilde{U}(t_n)\to \tilde U(t)$ in $\V^*$. In conclusion, we obtain $\tilde{U}\in \mathcal{C}_{w^*}([0,T];\V^*) $.
\end{proof}

The following result shows that one-sided limits of $U$ exist in the weak* topology of $\V^*$.
Moreover, a vanishing energy defect at a certain time
yields right-continuity of the energy in this point.

\begin{proposition}\label{prop point ii}
At each point $t_0\in[0,T]$, 
the one-sided weak* limits 
$U(t_0+):=\lim_{t \searrow t_0}U(t)$ and $U(t_0-):=\lim_{t \nearrow t_0}U(t)$
exist in $\V^*$.
Moreover, if $t_0 \in [0,T]$ such that $\E(U(t_0+))=E(t_0+)$, then $\lim_{t \searrow t_0} \E(U(t))=\E(U(t_0+))$.  
\end{proposition}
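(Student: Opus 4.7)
The plan is to address the two parts independently. For the existence of the one-sided weak* limits in $\V^*$, the starting point is the $\BV([0,T];\Y^*)$ regularity of $U$ from Proposition~\ref{prop:aprioribounds}, which already provides one-sided strong limits $U(t_0\pm)$ in $\Y^*$ at every $t_0 \in [0,T]$. The task is to upgrade these to limits in the weak* topology of the larger dual $\V^*$. To this end I would fix $t_0$ and an arbitrary sequence $t_n \searrow t_0$, use the uniform bound $\|U(t_n)\|_{\V^*} \leq \|U\|_{L^\infty_{w^*}(0,T;\V^*)}$ together with separability of $\V$ from~\ref{space as}, and invoke the sequential Banach--Alaoglu theorem to extract a subsequence with $U(t_{n_k}) \stackrel{*}{\rightharpoonup} U_+$ in $\V^*$ for some $U_+ \in \V^*$.

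The identification step uses the dense embedding $\Y \hookrightarrow \V$, which makes the adjoint embedding $\V^* \hookrightarrow \Y^*$ both continuous and injective. Continuity transports the weak* convergence to $\Y^*$, where the full sequence already converges strongly to $U(t_0+)$ by BV-regularity, so the two limits must coincide in $\Y^*$; injectivity then forces $U_+$ to be uniquely determined in $\V^*$, independently of the extracted subsequence or of the chosen sequence $t_n \searrow t_0$. A standard subsequence-of-subsequence argument promotes the subsequential convergence to convergence of the full sequence, yielding $U(t_0+) \in \V^*$; the argument for $U(t_0-)$ is identical.

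For the energy continuity, I would sandwich $\E(U(t))$ as $t \searrow t_0$. The weak* lower semicontinuity of $\E$ from~\ref{energy as}, combined with the weak* convergence $U(t) \stackrel{*}{\rightharpoonup} U(t_0+)$ in $\V^*$ from Step~1, gives $\liminf_{t\searrow t_0} \E(U(t)) \geq \E(U(t_0+))$. For the upper bound I would use $\E(U(t)) \leq E(t)$ for a.a.\ $t$ together with $E \in \BV([0,T])$ (so that $E(t_0+)$ exists) to obtain $\limsup_{t\searrow t_0} \E(U(t)) \leq E(t_0+)$. The assumed identity $E(t_0+) = \E(U(t_0+))$ then closes the sandwich.

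The main obstacle I anticipate is the topological mismatch in Step~1: the BV-type regularity naturally lives in the weaker dual $\Y^*$, so the existence of one-sided limits in the stronger dual $\V^*$ cannot be read off directly and must be obtained through a compactness-plus-identification argument. The crucial ingredient is the density of $\Y$ in $\V$, which guarantees injectivity of $\V^* \hookrightarrow \Y^*$ and thereby uniquely pins down the $\V^*$-limit from its $\Y^*$-image; everything after that is a straightforward semicontinuity sandwich.
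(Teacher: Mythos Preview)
Your proposal is correct and follows essentially the same approach as the paper: the upgrade from $\Y^*$-limits to weak* limits in $\V^*$ via Banach--Alaoglu compactness plus identification through the dense embedding $\Y\hookrightarrow\V$ is precisely the argument the paper invokes (by reference to the proof of Proposition~\ref{prop p v}), and your sandwich for the energy continuity matches the paper's chain of inequalities $E(t_0+)\geq\limsup\E(U)\geq\liminf\E(U)\geq\E(U(t_0+))$ verbatim.
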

\begin{proof}
        Firstly, since $U\in\BV([0,T];\Y^*)$, the one-sided limit $U(t_0+)$ and $U(t_0-)$ 
        exists in $\Y^*$,
        and by $U\in L^\infty(0,T;\V^*)$, the same argument as in the proof of Proposition~\ref{prop p v} yields their existence in $\V^*$.
        
        As $E\geq \E(U)$ and as $\E$ is weakly* lower semicontinuous, we further observe that 
        \begin{align*}
            E(t_{0}+)= \lim_{t\searrow t_0}E(t)
             \geq\limsup_{t\searrow t_0} \E(U(t))
            \geq\liminf_{t\searrow t_0} \E(U(t)) 
            \geq \E(\lim_{t\searrow t_0} U(t)) 
            \geq  \E(U(t_{0}+)).
        \end{align*}
        If the first and the last term in this chain of inequalities coincide,
        we conclude that $\E(U(t))\to\E(U(t_0+))$ as $t\searrow t_0$.
\end{proof}

Another useful characteristic of energy-variational solutions
is the following semi-flow property.

\begin{proposition}\label{prop p iv}
  Energy-variational solutions possess the semi-flow property:
  If $0<t_0<T$ and $(U_1,E_1)$ is a solution on $[0,t_0]$ and $(U_2,E_2) $ is a solution on $[t_0,T]$
  such that $U_1(t_0-)=U_2(t_0+)$ in $\Y^*$ and $E_2(t_0+) \leq E_1(t_0-)$, then the concatenation 
    \begin{equation*}
    (U(t),E(t)):=\begin{cases}
    (U_1(t),E_1(t)) \quad \text{if } t \in [0,t_0), \\
    (U_2(t),E_2(t)) \quad \text{if } t \in [t_0,T] ,
    \end{cases}
\end{equation*}
is a solution. If $(U,E)$ is an energy-variational solution on $[0,T]$, its restriction 
to a subinterval $[t_0,t_1]\subset[0,T]$ is an energy-variational solution on $[t_0,t_1]$
with initial value $U(t_0+)$.
\end{proposition}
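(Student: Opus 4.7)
The strategy is to verify separately, for the concatenation $(U,E)$ and for the restriction, the three requirements of Definition~\ref{sol def}: the regularity, the pointwise bound $\E(U)\leq E$, and the energy-variational inequality~\eqref{envar def1}. Throughout I rely on the pointwise version~\eqref{eq:envar.limits.pm} of Proposition~\ref{prop p v}, the existence of one-sided weak* limits in $\V^*$ from Proposition~\ref{prop point ii}, and the \textit{a priori} bounds of Proposition~\ref{prop:aprioribounds}.

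For the concatenation, the regularity and the energy bound are inherited piecewise: since $U_1(t_0-)=U_2(t_0+)$ in $\Y^*$ (and hence in $\V^*$, by $\V^*\hookrightarrow\Y^*$), the function $U$ has no additional jump at $t_0$ in $\Y^*$, while $E$ acquires only the non-negative jump $E_1(t_0-)-E_2(t_0+)$. To verify~\eqref{envar def1}, I would do a case analysis on the position of $s<t$ relative to $t_0$. For $t\leq t_0$ or $s\geq t_0$ the inequality follows directly from the respective piece, after noting that at the junction point $t_0$ the replacement of $(E_1(t_0-),U_1(t_0-))$ by $(E_2(t_0+),U_2(t_0+))$ can only decrease the bracketed expression, by the matching conditions. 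The essential case is $s<t_0<t$: I apply~\eqref{eq:envar.limits.pm} to $(U_1,E_1)$ on $[s,t_0]$ and to $(U_2,E_2)$ on $[t_0,t]$, and sum. The two integrals combine into $\int_s^t$; the boundary pairings with $\Phi(t_0)$ cancel because $U_1(t_0-)=U_2(t_0+)$; the only residual term is $E_1(t_0-)-E_2(t_0+)\geq 0$, which, when moved to the right-hand side, preserves the inequality $\leq 0$. The initial condition $U(0)=U_0$ is inherited from $(U_1,E_1)$.

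For the restriction to $[t_0,t_1]$, the regularity properties and the pointwise energy bound are immediate, and~\eqref{envar def1} for a.a.~$s<t$ with $s>t_0$ follows directly from $(U,E)$. The only non-trivial point is matching the initial condition $\tilde U(t_0):=U(t_0+)$. To this end, I apply~\eqref{eq:envar.limits.pm} for $(U,E)$ with $s=s_n$ a sequence of continuity points of $E$ decreasing to $t_0$ (such a sequence exists densely since $E\in\BV([0,T])$) and pass to the limit. The integrand is in $L^1(0,T)$ by Proposition~\ref{prop:aprioribounds}; $E(s_n-)\to E(t_0+)$ by the choice of $s_n$; and $\langle U(s_n-),\Phi(s_n)\rangle\to\langle U(t_0+),\Phi(t_0)\rangle$ by Proposition~\ref{prop point ii} together with the continuity of $\Phi$ in $\Y$. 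This yields the required inequality with initial value $\tilde U(t_0)=U(t_0+)$, and correspondingly $\tilde E(t_0):=E(t_0+)$.

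The main subtlety is the careful bookkeeping of one-sided limits at $t_0$: one must distinguish limits in the weak* topology of $\V^*$, available via Proposition~\ref{prop point ii}, from those in the norm topology of $\Y^*$, available from the $\BV$ regularity, and ensure that the duality pairings against $\Phi\in\Y$ behave continuously in either. Once the propositions cited above are invoked, no further estimates are required; the assumption $E_2(t_0+)\leq E_1(t_0-)$ is precisely the minimal condition that prevents a positive boundary defect from being introduced by the concatenation.
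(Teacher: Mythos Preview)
Your proposal is correct and follows essentially the same route as the paper: split at $t_0$, use the energy-variational inequality on each piece, and add. Your treatment is in fact more careful than the paper's own proof, which is rather terse; in particular, your explicit bookkeeping of the one-sided limits at the junction via Proposition~\ref{prop p v}, and your limit argument to recover the initial condition $U(t_0+)$ for the restriction, fill in details that the paper leaves implicit.
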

\begin{proof}
   Let $0<t_0<T$ and let $(U,E)$ be defined as above. Then $(U,E)$ satisfies the variational inequality \eqref{envar def1}. Indeed, since the variational inequality \eqref{envar def1} is formulated in an integral form, we decompose the integral over $[0,T]$ as two integrals over $(0,t_0)$ and over $(t_0,T)$. On the interval $[0,t_0)$ the inequality holds for $(U_1,E_1)$, and on $[t_0,T]$ it holds for $(U_2,E_2)$ by assumption, with test functions that are suitably restricted.
   Therefore, the variational inequality \eqref{envar def1} also holds for the concatenated pair $(U,E)$ on $[0,T]$. Moreover, due to weak* lower semicontinuity of $\E$, it holds
            \[
        \mathcal{E}(U(t_0+)) = \mathcal{E}(U_2(t_0+)) \leq \liminf_{t_n \searrow t_0} \mathcal{E}(U_2(t_n)) \leq \liminf_{t_n \searrow t_0} E_2(t_n) = E_2(t_0+) = E(t_0+).
            \]
            Thus the concatenation $(U,E)$ is also an energy-variational solution in the sense of the Definition \ref{sol def}.

            Furthermore, if $(U,E)$ is a solution on $[0,T]$, then  its restriction to any subinterval satisfies the same energy-variational inequality on any interval $[s,t] \subset [0,T]$. 
\end{proof}

An advantage of energy-variational solutions compared to weak solutions
is the structure of the solution set. 
As the first feature, we show its convexity in the case that $\mathcal K$ and $\tilde{\mathcal K}$ coincide.

\begin{proposition}\label{prop p iii}
Let $M\in[\E(U_0),\infty]$. 
If the regularity weights $\mathcal{K}, \tilde{\mathcal{K}}$ are such that $\mathcal{K}=\tilde{\mathcal{K}}$, then the solution set $\mathcal{S}(U_0,M)$ defined by
\begin{equation}
\label{eq:solset}
    \mathcal{S}(U_0,M):=\big\{(U,E) \;\text{is an energy-variational solution with initial data}\; U_0 \text{ and }E(0+)\leq M\big\}
\end{equation}
is convex.
\end{proposition}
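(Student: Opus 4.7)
The plan is to verify directly that, given $(U_1, E_1), (U_2, E_2) \in \mathcal{S}(U_0, M)$ and $\lambda \in [0,1]$, the pair
\[
(U_\lambda, E_\lambda) := \bigl(\lambda U_1 + (1-\lambda) U_2,\; \lambda E_1 + (1-\lambda) E_2\bigr)
\]
again lies in $\mathcal{S}(U_0, M)$. The function-space conditions $U_\lambda \in L^\infty_{w^*}(0,T;\V^*) \cap \BV([0,T];\Y^*)$ and $E_\lambda \in \BV([0,T])$ are immediate from the linearity of these spaces. The initial condition $U_\lambda(0) = U_0$ and the bound $E_\lambda(0+) \leq M$ follow at once from the corresponding properties of $(U_1,E_1)$ and $(U_2,E_2)$, and the inequality $\E(U_\lambda) \leq E_\lambda$ pointwise a.e.\ follows from the convexity of $\E$ in~\ref{energy as} combined with $\E(U_i) \leq E_i$ for $i=1,2$.

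The core of the argument is to derive~\eqref{envar def1} for $(U_\lambda, E_\lambda)$ from the same inequality for $(U_1, E_1)$ and $(U_2, E_2)$. Fix an admissible test function $\Phi \in \C^1([0,T]; \Y \cap \dom \partial\E^*)$. The contributions $E - \langle U, \Phi\rangle$, $\langle U, \partial_t \Phi\rangle$ and $-\mathcal{K}(\Phi)\,E$ appearing in~\eqref{envar def1} are all \emph{affine} in $(U,E)$, so they split exactly under the convex combination. For the remaining piece, the hypothesis $\mathcal{K} = \tilde{\mathcal{K}}$ is precisely what makes the argument work: assumption~\ref{ass:K.conv} tells us that the map
\[
U \mapsto \Psi(\tau, U) - \langle A(\tau, U), \Phi\rangle + \mathcal{K}(\Phi)\,\E(U)
\]
is \emph{convex} on $\dom\E \cap \dom\Psi$ for every $\Phi \in \Y$ and a.a.~$\tau \in [0,T]$. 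Applying this convexity pointwise at $U_\lambda$ and then integrating over $(s,t)$, while the affine pieces split exactly, bounds the left-hand side of~\eqref{envar def1} evaluated at $(U_\lambda, E_\lambda)$ from above by $\lambda$ times the same expression at $(U_1, E_1)$ plus $(1-\lambda)$ times that at $(U_2, E_2)$. Since both are nonpositive by hypothesis,~\eqref{envar def1} holds for $(U_\lambda, E_\lambda)$.

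The one point requiring some care is ensuring that all integrands appearing above lie in $L^1(0,T)$, so that the pointwise convex combination may indeed be passed under the integral. Here I would invoke Proposition~\ref{prop:aprioribounds}, which guarantees $\Psi(\cdot, U_i) \in L^1(0,T)$ and $\E(U_i) \leq E_i \in L^\infty(0,T)$ for $i = 1, 2$; together with the continuity of $\mathcal{K}$ and boundedness of $\langle A(\tau, U_i), \Phi\rangle$ inherited through~\eqref{eq:ass.K.lowerbound}, this controls the integrands at $U_1$ and $U_2$. The convexity estimate then furnishes an integrable upper bound on the corresponding integrand at $U_\lambda$, whereas~\eqref{eq:ass.K.lowerbound} combined with $\E(U_\lambda) \leq E_\lambda \in L^\infty(0,T)$ yields a lower bound. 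With the integrals well defined, the integrated convex-combination inequality gives $(U_\lambda, E_\lambda) \in \mathcal{S}(U_0, M)$, completing the proof.
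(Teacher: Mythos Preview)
Your proof is correct and follows exactly the same strategy as the paper's own argument: the affine terms in~\eqref{envar def1} split under convex combinations, and the remaining nonlinear block is convex in $U$ by~\ref{ass:K.conv} once $\mathcal{K}=\tilde{\mathcal{K}}$. The paper's proof is only two sentences and omits the integrability considerations you spell out; your additional care there (via Proposition~\ref{prop:aprioribounds} and~\eqref{eq:ass.K.lowerbound}) is justified but not strictly needed for the result as stated.
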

\begin{proof}
The conditions $\E(U)\leq E$ and $E(0+)\leq M$ are preserved under convex combinations,
and if $\mathcal K=\tilde{\mathcal K}$, then
the left-hand side of energy-variational formulation~\eqref{envar def1} 
is convex as a function of $(U,E)$ by~\ref{ass:K.conv}.
This shows convexity of $\mathcal{S}(U_0,M)$.
\end{proof}

We next show the sequential weak* compactness of the solution set.
For this property, the weak* lower semicontinuity induced by 
the regularity weight $\mathcal K$ from~\ref{ass:K.lsc} is crucial.

\begin{proposition}\label{prop p i}
Let $U_0\in\V^*$ and $M\in[\E(U_0),\infty)$.
Then the set $\mathcal{S}(U_0,M)$, defined in~\eqref{eq:solset}, is sequentially weakly* compact in the space $L^\infty_{w^*}(0,T;\V^*) \cap \mathrm{BV}([0,T];\Y^*)\times\BV([0,T])$.
\end{proposition}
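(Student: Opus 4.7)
The plan is to take a sequence $\{(U_n,E_n)\}\subset\mathcal{S}(U_0,M)$, extract a weak* convergent subsequence by compactness, and verify that the limit pair inherits every defining property of an energy-variational solution. Proposition~\ref{prop:aprioribounds} yields the uniform bound
\[
\|U_n\|_{L^\infty_{w^*}(0,T;\V^*)}+\|U_n\|_{\BV([0,T];\Y^*)}+\|E_n\|_{\BV([0,T])}+\|\Psi(\cdot,U_n)\|_{L^1(0,T)}\leq C(M).
\]
As $\Y$, and hence $\V$, is separable by~\ref{space as}, a Helly-type selection theorem for $\Y^*$-valued BV functions combined with the pointwise $\V^*$-bound produces a subsequence (not relabeled) and a limit $U$ with $U_n(t)\stackrel{*}{\rightharpoonup} U(t)$ in $\V^*$ for every $t\in[0,T]$; Banach--Alaoglu in the dual $L^\infty_{w^*}(0,T;\V^*)=(L^1(0,T;\V))^*$ confirms the same limit in $L^\infty_{w^*}$. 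A scalar Helly argument on $\{E_n\}\subset\BV([0,T])$ gives $E_n(t)\to E(t)$ for every $t\in[0,T]$ up to a further subsequence. Lower semicontinuity of the relevant norms places $(U,E)$ in the target space.

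To check $(U,E)\in\mathcal{S}(U_0,M)$, the initial condition $U(0)=U_0$ follows from pointwise convergence at $t=0$. Weak* lower semicontinuity of $\E$ from~\ref{energy as} gives $\E(U(t))\leq\liminf_n\E(U_n(t))\leq\liminf_n E_n(t)=E(t)$ for every $t\in[0,T]$. For $E(0+)\leq M$ I would revisit the Gronwall step in the proof of Proposition~\ref{prop:aprioribounds} applied with $\Phi=0$ to obtain the uniform-in-$n$ bound $E_n(t)\leq(M+Ct)e^{C't}$; passing first $n\to\infty$ and then $t\searrow 0$ delivers $E(0+)\leq M$.

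The main step is passing to the limit in the energy-variational inequality for $(U_n,E_n)$ with a fixed test function $\Phi\in\C^1([0,T];\Y\cap\dom\partial\E^*)$. At any pair $s<t$ of common continuity points of $E$ (a cocountable set), the boundary term $[E_n-\langle U_n,\Phi\rangle]|_s^t$ passes to its limit by the pointwise convergences above. The linear integral $\int_s^t\langle U_n,\partial_t\Phi\rangle\dd{\tau}$ converges by testing the weak* convergence of $U_n$ against $\mathbbm{1}_{(s,t)}\partial_t\Phi\in L^1(0,T;\Y)$, and $-\int_s^t\mathcal{K}(\Phi)E_n\dd{\tau}\to-\int_s^t\mathcal{K}(\Phi)E\dd{\tau}$ by dominated convergence using the uniform $L^\infty$ bound on $E_n$. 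The decisive piece is the nonlinear integrand
\[
g_n(\tau):=\Psi(\tau,U_n(\tau))-\langle A(\tau,U_n(\tau)),\Phi(\tau)\rangle+\mathcal{K}(\Phi(\tau))\E(U_n(\tau)),
\]
which, by the weak* lower semicontinuity encoded in~\ref{ass:K.lsc}, satisfies $g(\tau)\leq\liminf_n g_n(\tau)$ a.e., where $g$ is defined analogously for $U$. Assumption~\ref{ass:K.conv} together with the uniform bound $\E(U_n)\leq E_n\leq C(M)$ provides an integrable minorant for $\{g_n\}$, so Fatou yields $\int_s^t g\dd{\tau}\leq\liminf_n\int_s^t g_n\dd{\tau}$. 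Assembling these passages produces the energy-variational inequality for $(U,E)$ at almost every $s<t$, which closes the argument. The main obstacle is precisely this nonlinear term: only~\ref{ass:K.lsc} makes Fatou applicable, and only thanks to~\ref{ass:K.conv} and the uniform energy bound is the minorant integrable; all other terms are either pointwise boundary contributions or linear in $(U,E)$ and pose no difficulty.
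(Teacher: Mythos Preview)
Your proposal is correct and follows essentially the same route as the paper: uniform bounds from Proposition~\ref{prop:aprioribounds}, Helly-type selection for pointwise weak* convergence of $U_n(t)$ in $\V^*$ and of $E_n(t)$, and then Fatou's lemma for the nonlinear integrand using the lower semicontinuity from~\ref{ass:K.lsc} together with the integrable minorant from~\ref{ass:K.conv}. The only cosmetic differences are that the paper handles $\int_s^t\mathcal{K}(\Phi)E_n\dd\tau$ via the compact embedding $\BV([0,T])\hookrightarrow L^1(0,T)$ rather than dominated convergence, and that you explicitly verify $E(0+)\leq M$ via a Gronwall estimate, which the paper leaves implicit.
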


\begin{proof}
        Let $\{(U^n,E^n)\}_{n\in\N} \subset \mathcal{S}(U_0,M)$. 
        From Proposition~\ref{prop:aprioribounds}
        we obtain the uniform bounds
        \[\|U^n\|_{L^\infty_{w^*}(0,T;\V^*) \cap \mathrm{BV}([0,T];\Y^*)}\leq c, \quad \|E^n\|_{\mathrm{BV}([0,T])}\leq c\]
        for some $c>0$.
        From the sequential Banach--Alaoglu theorem we infer that there exist subsequences (not relabeled), such that $U^n \stackrel{*}{\rightharpoonup}U$ in $L^\infty_{w^*}(0,T;\V^*) \cap \mathrm{BV}([0,T];\Y^*)$ and $E^n \stackrel{*}{\rightharpoonup} E $ in $\mathrm{BV}([0,T])$. For each $n \in \mathbbm{N}, s < t \in [0,T], \Phi \in \mathcal{C}^1([0,T];\Y)$ it holds
\begin{equation}
\label{eq:envar.seq}
[ E^n-\langle U^n, \Phi \rangle ]  \Big|^t_s +\int_s^t \langle U^n , \partial_t \Phi \rangle- \langle A(\tau,U^n), \Phi\rangle +\Psi(\tau,U^n) +\mathcal{K}(\Phi)[\mathcal{E}(U^n)-E^n] \dd{\tau} \leq 0.
\end{equation}   
Due to a generalization of Helly's selection theorem
(see~\cite[Theorem B.5.10]{mielkeroubicek2015ris}), we have $U^n(t)\stackrel{*}{\rightharpoonup} U(t)$ in $\Y^*$
and $E^n(t) \rightarrow E(t)$ a.e.~on $[0,T]$. 
Arguing similarly as in the proof of Proposition~\ref{prop p v},
we conclude $U^n(t)\rightharpoonup U(t)$ in $\V^*$.
Moreover, from the compact embedding $\mathrm{BV}([0,T]) \hookrightarrow L^1(0,T)$, the convergence $E^n \rightarrow E$ in $L^1(0,T)$ follows. 
These convergence suffice to pass to the limit in all terms in~\eqref{eq:envar.seq}
where $(U^n,E^n)$ appears in a linear way.
For the remaining terms, 
we use Fatou's lemma together with 
the lower semicontinuity property from~\ref{ass:K.lsc}
to conclude
\[
\begin{aligned}
\liminf_{n\to\infty}
\int_s^t - \langle A(\tau,U^n), \Phi\rangle +\Psi(\tau,U^n) +\mathcal{K}(\Phi)\mathcal{E}(U^n)\dd{\tau}
&\geq 
\int_s^t - \langle A(\tau,U), \Phi\rangle +\Psi(\tau,U) +\mathcal{K}(\Phi)\mathcal{E}(U)\dd{\tau}.
\end{aligned}
\]
Note that the application of Fatou's lemma was justified
as~\eqref{eq:ass.K.lowerbound} induces an integrable lower bound 
for the integrand,
compare also estimate~\eqref{eq:Fatou.lowerbound} in the proof of Theorem~\ref{main theorem1} below.
In conclusion, 
we can thus pass to the limit inferior on the left-hand side of~\eqref{eq:envar.seq},
which yields~\eqref{envar def1}. 
Moreover, passing to the limit inferior in the condition $\E(U^n)\leq E^n$ a.e.~in $[0,T]$
yields $\E(U^n)\leq E^n$ a.e.~in $[0,T]$.
Thus the limiting pair $(U,E)$ is an energy-variational solution in the sense of Definition \ref{sol def}.
\end{proof}

\subsection{Existence of energy-variational solutions}
\label{subsec:existence}

We now come to the first main result of this article,
which states the global-in-time existence of energy-variational solutions 
to the abstract problem~\eqref{eq:general}.

\begin{theorem}[Existence of energy-variational solutions]\label{main theorem1}
Let the assumptions  \ref{space as}-\ref{ass:K.conv} be satisfied. For every $U_0 \in \operatorname{dom}\E$, there exists an energy-variational solution to \eqref{eq:general} in the sense of Definition \ref{sol def}, which satisfies $E(0+)=\mathcal{E}(U_{0})$.
\end{theorem}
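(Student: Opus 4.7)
The plan is to implement the minimizing-movements scheme sketched around~\eqref{eq:discrete.intro}: construct piecewise-constant-in-time interpolants from discrete iterates indexed by a partition parameter $\tau=T/N\to 0$, and pass to the weak* limit. The compactness argument and the limit passage for the nonlinear terms are essentially those of Proposition~\ref{prop p i}, so the main new content lies in the construction of the discrete iterates together with the identification of the initial energy.

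\textbf{Discrete iterates (the core construction).} Fix $N\in\N$, set $t_n=n\tau$, and let $A^n,\Psi^n$ denote the $\tau^{-1}$-averages of $A(s,\cdot),\Psi(s,\cdot)$ over $(t_{n-1},t_n)$. Starting from $U^0:=U_0$, I construct $U^n\in\dom\E$ as a solution of the saddle-point problem whose value is the supremum over $\Phi\in \Y\cap\dom\partial\E^*$ of the left-hand side of~\eqref{eq:discrete.intro}. By assumption~\ref{ass:K.conv}, after adding the concave-in-$\Phi$ penalty $\tau\tilde{\mathcal K}(\Phi)\E(U)$, the functional
\begin{equation*}
\E(U)+\tau\Psi^n(U)-\E(U^{n-1})-\langle U-U^{n-1},\Phi\rangle-\tau\langle A^n(U),\Phi\rangle+\tau\tilde{\mathcal K}(\Phi)\E(U)
\end{equation*}
is convex and weakly* lsc in $U$ and affine (hence concave and continuous) in $\Phi$; the affine lower bound~\eqref{eq:ass.K.lowerbound} and the coercivity from~\ref{energy as} yield weak* inf-compactness in $U$. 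Restricting $\Phi$ to closed balls in $\Y\cap\dom\partial\E^*$, which are nonempty and convex for radius less than $\alpha$ by Proposition~\ref{prop:Estar}, a Ky Fan--Sion minimax theorem produces the saddle point. Letting the radius tend to $\alpha$ and exploiting the Fenchel-type consistency~\ref{ass energy bal} (to treat the case $\Phi\in\partial\E(U^n)$) yields an iterate $U^n$ satisfying~\eqref{eq:discrete.intro} for every admissible $\Phi$; note that $\dom\partial\E^*\subset B_\alpha$ by Proposition~\ref{prop:Estar}, so the ball exhausts all admissible test functions.

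\textbf{Uniform bounds, limit passage, and initial condition.} Testing~\eqref{eq:discrete.intro} with $\Phi=0$ and summing telescopically gives $\E(U^n)+\tau\sum_{k\le n}\Psi^k(U^k)\le\E(U_0)$; combined with~\eqref{eq:ass.K.lowerbound} and a discrete Gronwall argument, this yields $\tau$-uniform bounds on $\sup_n\E(U^n)$, on $\sup_n\|U^n\|_{\V^*}$ via~\ref{energy as}, and, after testing against arbitrary $\Phi\in B_\alpha\cap\Y$ and using that $\mathcal K,\tilde{\mathcal K}$ map bounded sets to bounded sets, a uniform BV bound on the piecewise-constant interpolants $U_N$ in $\Y^*$. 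A Helly-type extraction as in Proposition~\ref{prop p i} yields (along a subsequence) $U_N(t)\stackrel{*}{\rightharpoonup}U(t)$ in $\V^*$ pointwise a.e.\ and $E_N:=\E(U_N)\to E$ in $L^1(0,T)$. Summing~\eqref{eq:discrete.intro} over indices with $t_n\in(s,t]$ and letting $\tau\to 0$, the linear terms converge directly; for the nonlinear part I apply Fatou's lemma (whose hypotheses are provided by~\eqref{eq:ass.K.lowerbound}) to the map
\begin{equation*}
U\mapsto \Psi(\cdot,U)-\langle A(\cdot,U),\Phi\rangle+\mathcal K(\Phi)\E(U),
\end{equation*}
which is weakly* lsc by~\ref{ass:K.lsc}, and subtract the $L^1$-convergent term $\mathcal K(\Phi)E_N\to\mathcal K(\Phi)E$ to isolate precisely the correction $\mathcal K(\Phi)[\E(U)-E]$ of~\eqref{envar def1}. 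The inequality $\E(U)\le E$ a.e.\ follows from weak* lsc of $\E$. Finally, $U_N(0)=U_0$ for every $N$ (hence $U(0+)=U_0$ by Helly) and the first-step discrete energy estimate gives $E_N(0+)=\E(U^1)\le\E(U_0)+C\tau$, yielding $E(0+)\le\E(U_0)$; the reverse inequality $\E(U_0)=\E(U(0+))\le E(0+)$ is contained in Proposition~\ref{prop point ii}, so $E(0+)=\E(U_0)$.

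\textbf{Main obstacles.} The most delicate step is the minimax construction of $U^n$: one must rigorously verify the convex--concave structure in the non-separable weak* setting of $\V^*$ and handle the fact that the natural class of admissible test functions $\Y\cap\dom\partial\E^*$ need not be a linear subspace. This is precisely why assumption~\ref{ass:K.conv} introduces the auxiliary weight $\tilde{\mathcal K}$, which enforces convexity at the discrete level but is invisible in the continuous inequality and can therefore be chosen much coarser than $\mathcal K$. A secondary subtlety is that the discrete identity $E_N=\E(U_N)$ is lost in the limit because $\E$ is only weakly* lsc under the natural convergences; the Fatou argument built on~\ref{ass:K.lsc} converts the resulting non-negative energy gap $E-\E(U)$ into the extra term $\mathcal K(\Phi)[\E(U)-E]$ that distinguishes the energy-variational inequality~\eqref{envar def1} from a weak formulation of~\eqref{eq:general}.
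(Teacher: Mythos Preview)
Your minimax construction has a genuine gap. You add the penalty $\tau\tilde{\mathcal K}(\Phi)\E(U)$ to obtain convexity in $U$ via~\ref{ass:K.conv}, and then claim the resulting functional is ``affine (hence concave)'' in $\Phi$. It is not: $\tilde{\mathcal K}$ is assumed \emph{convex} in~\ref{ass:K.conv}, so $\Phi\mapsto\tau\tilde{\mathcal K}(\Phi)\E(U)$ is convex with the wrong sign, and the Sion/Ky~Fan hypotheses fail. Even if a saddle point existed, you would not obtain~\eqref{eq:discrete.intro}: for $\hat U\in\partial\E^*(\Phi)$ with $\langle A^n(\hat U),\Phi\rangle=\Psi^n(\hat U)$ from~\ref{ass energy bal}, the value of your penalized functional at $(\hat U,\Phi)$ equals $\E(\hat U)-\E(U^{n-1})-\langle\hat U-U^{n-1},\Phi\rangle+\tau\tilde{\mathcal K}(\Phi)\E(\hat U)$, and the last term is nonnegative, so the subgradient inequality no longer forces $\sup_\Phi\inf_U\le 0$. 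You also invert Proposition~\ref{prop:Estar}: it gives $B_\alpha\subset\dom\partial\E^*$, not the reverse, so balls of radius $<\alpha$ do \emph{not} exhaust the admissible test functions (in the superlinear case $\dom\partial\E^*=\V^{**}$).

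The paper resolves this by a different mechanism: it does \emph{not} add $\tilde{\mathcal K}$ to the functional but instead restricts the test class at step $n$ to $\Y_\tau:=\{\Phi\in\Y\cap\dom\partial\E^*:\tilde{\mathcal K}(\Phi)\le 1/\tau\}$. On $\Y_\tau$ one can rewrite
\[
\mathcal F^n(U|\Phi)=(1-\tau\tilde{\mathcal K}(\Phi))\,\E(U)+\tau\big[\Psi^n(U)-\langle A^n(U),\Phi\rangle+\tilde{\mathcal K}(\Phi)\E(U)\big]-\langle U-U^{n-1},\Phi\rangle-\E(U^{n-1}),
\]
and since $1-\tau\tilde{\mathcal K}(\Phi)\ge 0$ on $\Y_\tau$, convexity and weak* lsc in $U$ follow from~\ref{energy as} and~\ref{ass:K.conv}. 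At the same time $\mathcal F^n$ remains affine in $\Phi$, so the minimax equality holds and the Fenchel argument gives $\inf_U\sup_{\Phi\in\Y_\tau}\mathcal F^n\le 0$, hence~\eqref{eq:discrete.intro} for all $\Phi\in\Y_\tau$. The restriction to $\Y_\tau$ is harmless in the limit because every fixed $\Phi\in\Y\cap\dom\partial\E^*$ lies in $\Y_\tau$ for $\tau$ small. Your compactness and limit-passage sketch (Helly extraction, Fatou with~\ref{ass:K.lsc}, identification of $E(0+)$) matches the paper and is fine; the defect is solely in the discrete solvability step.
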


The proof of Theorem~\ref{main theorem1} is divided into two parts. Firstly, we establish the existence of solutions at the time-discrete level. Secondly, we show convergence of these discretized solutions to an energy-variational solution.

\subsubsection{Construction of approximate solutions}

As the first step towards a proof of Theorem~\ref{main theorem1}, we construct approximate solutions by a time-discrete scheme,
consisting of solving a saddle-point problem in each time step.

 Fix $N \in \mathbbm{N}$ such that $N >\tilde{\mathcal{K}}(0)$ and set $\tau = T/N$. In order to obtain an equidistant discretization, we set $t^n = \tau n$ for $n \in  \{0,1,\ldots, N\}$. We define  the class of test functions as 
 \[
 \mathbbm{Y}_\tau:=\{\Phi\in \mathbbm{Y}\cap\dom\partial\E^* \; |\; \tilde{\mathcal{K}}
(\Phi)\leq 1/\tau\},
\]
and we set
    \[
    \begin{aligned}
         A^n(U)&:= \frac{1}{\tau}\int_{t^{n-1}}^{t^n} A(t,U) \dd{t},
         &\qquad
        \Psi^n(U)&:= \frac{1}{\tau}\int_{t^{n-1}}^{t^n} \Psi(t,U) \dd{t}, 
    \end{aligned}
    \]
     for $U \in \mathbbm{V}^*$. We define the function $\mathcal{F}^n:\mathbbm{V}^*\times \mathbbm{Y}_\tau\rightarrow (-\infty,\infty]$ as
\begin{align} \label{function F1}
  \mathcal{F}^n(U|\Phi)&=(\mathcal{E}(U)+\tau\Psi^n(U)-\mathcal{E}(U^{n-1}))-\langle U-U^{n-1},\Phi\rangle -\tau \langle A^n(U),\Phi \rangle, 
\end{align}
where we set $\mathcal{F}^n(U|\Phi)=\infty$ if $U \notin \dom \E$.

We defined $U^n \in \V^*$, $n \in \{0,1,\ldots, N\}$ iteratively by the following minimization scheme:
    \begin{align}
    \begin{split}
        U^0&=U_0, \\
        U^n &= \argmin_{U \in \V^*} \sup_{\Phi \in \Y_\tau} \mathcal{F}^n(U|\Phi). \label{min problem}
        \end{split}
    \end{align}
The proof of existence of solutions $U^n$ is preceded by the following auxilliary lemma.

\begin{lemma}[$\mathcal{H}$ is proper]\label{H is proper}
  Let $\mathcal{H}^n:\V^* \rightarrow \R \cup \{+\infty\}$ be defined as 
  \[
   \mathcal{H}^n(U):= \sup_{\Phi \in \Y_\tau} \mathcal{F}^n(U | \Phi).
  \]
  Then $\mathcal H$ is a convex, weakly* lower semicontinuous and proper functional such that 
  \[
  \inf_{U \in \V^*}\mathcal{H}^n(U) \leq 0.
  \]
\end{lemma}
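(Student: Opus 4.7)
The plan is to establish the three claims—convexity, weak* lower semicontinuity, and the bound $\inf_U \mathcal{H}^n \leq 0$—in sequence. The first two follow from the structure of $\Y_\tau$ by analyzing $\mathcal{F}^n(\cdot|\Phi)$ for each admissible $\Phi$; the third combines a Fenchel--Young computation with a minimax argument.

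For the convexity and weak* lower semicontinuity of $\mathcal{H}^n$, I would decompose
\[
\mathcal{F}^n(U|\Phi) = \bigl(1 - \tau\tilde{\mathcal K}(\Phi)\bigr)\mathcal{E}(U) + \tau\bigl[\Psi^n(U) - \langle A^n(U),\Phi\rangle + \tilde{\mathcal K}(\Phi)\mathcal{E}(U)\bigr] - \mathcal{E}(U^{n-1}) - \langle U - U^{n-1},\Phi\rangle.
\]
By definition of $\Y_\tau$ the coefficient $1 - \tau\tilde{\mathcal K}(\Phi)$ is nonnegative, so the first term inherits convexity and weak* lsc from~\ref{energy as}; the bracketed term has these properties pointwise in time by~\ref{ass:K.conv}, and averaging over $[t^{n-1},t^n]$ preserves them; the remaining terms are affine and weak* continuous. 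Hence $\mathcal{F}^n(\cdot|\Phi)$ is convex and weak* lsc, and so is the pointwise supremum $\mathcal{H}^n$.

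The second step is a Fenchel--Young identity at a carefully chosen candidate. For any $\Phi \in \Y_\tau \subset \Y \cap \dom \partial\mathcal{E}^*$, assumption~\ref{ass energy bal} yields $\zeta_\Phi \in \partial\mathcal{E}^*(\Phi) \cap \dom\Psi$ with $\langle A(t,\zeta_\Phi),\Phi\rangle = \Psi(t,\zeta_\Phi)$ for all $t$, so after averaging $\tau[\Psi^n(\zeta_\Phi) - \langle A^n(\zeta_\Phi),\Phi\rangle] = 0$. Combining the Fenchel--Young equality $\mathcal{E}(\zeta_\Phi) - \langle \zeta_\Phi,\Phi\rangle = -\mathcal{E}^*(\Phi)$ (valid since $\zeta_\Phi \in \partial\mathcal{E}^*(\Phi)$) with the Fenchel--Young inequality $\langle U^{n-1},\Phi\rangle - \mathcal{E}(U^{n-1}) \leq \mathcal{E}^*(\Phi)$ then gives $\mathcal{F}^n(\zeta_\Phi|\Phi) \leq 0$. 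In particular $\inf_{U\in\V^*}\mathcal{F}^n(U|\Phi) \leq 0$ for every $\Phi\in\Y_\tau$, hence $\sup_{\Phi\in\Y_\tau}\inf_{U\in\V^*}\mathcal{F}^n(U|\Phi)\leq 0$.

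To convert this max--min bound into the desired min--max bound, I would invoke Sion's minimax theorem. Applying~\ref{ass:K.conv} with $\Phi=0$ produces $\mathcal{F}^n(U|0) \geq (1 - \tau(\tilde{\mathcal K}(0)+c(0)))\mathcal{E}(U) - C$, so for $\tau$ small (ensured by the condition $N > \tilde{\mathcal K}(0)$, refining if necessary) we have $\mathcal{H}^n(U) \to \infty$ as $\mathcal{E}(U)\to\infty$; together with the weak* compactness of sublevel sets of $\mathcal{E}$ from~\ref{energy as}, this confines the infimum of $\mathcal{H}^n$ to a weakly* compact convex set $X_R = \{U\in\V^*:\mathcal{E}(U)\leq R\}$. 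On $X_R \times \Y_\tau$ the Sion hypotheses all hold—$X_R$ is compact, $\Y_\tau$ is convex, $\mathcal{F}^n$ is convex-lsc in $U$ and affine-continuous in $\Phi$—yielding
\[
\inf_{U\in\V^*}\mathcal{H}^n(U) = \min_{U\in X_R}\sup_{\Phi\in\Y_\tau}\mathcal{F}^n(U|\Phi) = \sup_{\Phi\in\Y_\tau}\min_{U\in X_R}\mathcal{F}^n(U|\Phi) \leq 0.
\]
The main obstacle is the last inequality: I need $R$ chosen large enough that $\zeta_\Phi \in X_R$ for all $\Phi \in \Y_\tau$, so that the previous paragraph can be applied uniformly. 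This rests on combining Proposition~\ref{prop:Estar}, which controls $\mathcal{E}^*$ on the admissible set, with the coercivity of $\mathcal{E}$ from~\ref{energy as}, in order to convert the Fenchel--Young identity $\mathcal{E}(\zeta_\Phi)+\mathcal{E}^*(\Phi)=\langle \zeta_\Phi,\Phi\rangle$ into a uniform bound on $\mathcal{E}(\zeta_\Phi)$. Properness of $\mathcal{H}^n$ is then automatic, since the construction exhibits a point where $\mathcal{H}^n$ is finite.
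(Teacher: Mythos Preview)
Your first two steps---the decomposition showing convexity and weak* lower semicontinuity of $\mathcal{F}^n(\cdot|\Phi)$, and the Fenchel--Young argument yielding $\mathcal{F}^n(\zeta_\Phi|\Phi)\leq 0$ for each $\Phi\in\Y_\tau$---are exactly what the paper does. The divergence, and the gap, is in the minimax step.

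You localize to the sublevel set $X_R=\{\E\leq R\}$ in order to apply Sion, and then need $\zeta_\Phi\in X_R$ uniformly in $\Phi\in\Y_\tau$ to conclude $\sup_\Phi\min_{X_R}\mathcal{F}^n\leq 0$. The proposed route to this uniform bound---Proposition~\ref{prop:Estar} together with the Fenchel--Young identity $\E(\zeta_\Phi)+\E^*(\Phi)=\langle\zeta_\Phi,\Phi\rangle$ and the coercivity of $\E$---does not deliver it. The set $\Y_\tau$ need not be bounded (nothing forces $\tilde{\mathcal K}$ to be coercive on $\Y$), so neither $\|\Phi\|$ nor $\E^*(\Phi)$ is controlled; and even when $\Phi\in\overline B_\alpha$ so that $\E^*(\Phi)\leq\beta$, the identity combined with $\E(\zeta_\Phi)\geq\alpha\|\zeta_\Phi\|-\beta$ only gives $\alpha\|\zeta_\Phi\|-\beta\leq\alpha\|\zeta_\Phi\|+\beta$, which is vacuous. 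A concrete counterexample is $\E(U)=\tfrac12\|U\|^2$ in a Hilbert space with $\tilde{\mathcal K}\equiv 0$: then $\zeta_\Phi=\Phi$ and $\E(\zeta_\Phi)=\tfrac12\|\Phi\|^2$ is unbounded over $\Y_\tau=\Y\cap\dom\partial\E^*$. Your localization also cannot be justified by coercivity of $\mathcal H^n$ alone without first knowing $\inf\mathcal H^n$ is finite, which is precisely the conclusion you are after.

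The paper sidesteps this by invoking a minimax theorem that does not require a compact ambient set but only that the sublevel sets of $\mathcal{F}^n(\cdot|\Phi_0)$ are weakly* compact for \emph{one} fixed $\Phi_0$; taking $\Phi_0=0$ and using the coercivity of $\E$ supplies this. With such a theorem one obtains $\inf_{\V^*}\sup_{\Y_\tau}\mathcal F^n=\sup_{\Y_\tau}\inf_{\V^*}\mathcal F^n$ directly, and your second step already shows the right-hand side is $\leq 0$. Replacing Sion on $X_R$ by a coercive minimax result of this type closes the gap without any uniform control of $\E(\zeta_\Phi)$.
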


\begin{proof}
Let $\Phi \in \Y_\tau$ be arbitrary.
By the energy-balance assumption \ref{ass energy bal},
there exists $\hat{U}\in\partial \E^*(\Phi)\cap\dom\Psi^n$
such that $\langle A^n(\hat U),\Phi \rangle = \Psi^n(\hat U)$. Then it holds
\[
    \inf_{U \in \V^*} \mathcal{F}^n(U|\Phi) 
    \leq \mathcal{F}^n(\hat{U}|\Phi) 
    =\mathcal{E}(\hat{U})-\mathcal{E}(U^{n-1})-\langle \hat{U}-U^{n-1},\Phi\rangle \leq 0.
\]
Since $\Phi$ was arbitrary,  
the claimed inequality follows if
\begin{align}\label{minmax}
  \inf_{U\in \V^*}\sup_{\Phi \in \Y_\tau} \mathcal{F}^n(U|\Phi)=\sup_{\Phi \in \Y_\tau}\inf_{U\in \V^* }\mathcal{F}^n(U|\Phi)  .
\end{align}  
We observe that the function $\mathcal{F}^n(U|\Phi)$ is concave with respect to $\Phi$, since it is linear in $\Phi$ and defined on the convex subset $\Y_\tau$. Moreover, it is continuous with respect to $\Phi$. To show convexity in $U$, we rewrite $\mathcal{F}^n$ as
    \begin{align}
            \mathcal{F}^n(U|\Phi)&=(1-\tau \K )[\mathcal{E}(U)-\mathcal{E}(U^{n-1})]-\langle U-U^{n-1},\Phi\rangle \\
            &\phantom{=}+\tau\Psi^n(U)- \tau \langle A^n(U),\Phi \rangle +\K[\mathcal{E}(U)-\E(U^{n-1})]  .\nonumber  
\end{align}
Since $\E$ is convex, since $1-\tau \K\geq 0$, and since the second line in the above expression is convex thanks to the assumption \ref{ass:K.conv}, we conclude that $\mathcal{F}^n$ is convex with respect to $U$. Similarly, we see that $\mathcal{F}^n$ is weakly* lower semicontinuous with respect to $U$.
In particular, this implies that $\mathcal H$ is convex and weakly* lower semicontinuous
as the supremum of functionals with these properties.
Moreover, we conclude that sublevel sets of the function $\mathcal{F}^n(\cdot|0)$ are weakly* closed.
They are also bounded 
due to the coercivity assumption on $\E$ from~\ref{energy as}. 
Therefore, we can apply~\cite[Theorem 2.130]{barbu}, and we conclude the equality \eqref{minmax},
which finishes the proof.
\end{proof}

We can now show the existence of solutions to the minimization problems from~\eqref{min problem}.

\begin{lemma}[Existence of a discrete minimizer]\label{ex min1}
    Let $N\in\N$ with $\tilde{\mathcal K}(0)<N$.
    Then there exists a solution $(U^0,\dots,U^N)$ to the minimization problem~\eqref{min problem},
    Moreover, for $n=1,\dots,N$, the minimizer $U^n$ satisfies 
    \begin{equation}
    \label{eq:Fn.leq0}
    \sup_{\Phi \in \Y_\tau}\mathcal{F}^n(U^n|\Phi) \leq 0.
    \end{equation}
    \end{lemma}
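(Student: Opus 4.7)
The plan is to iterate over $n=1,\dots,N$ and, at each step, to apply the direct method to the functional $\mathcal{H}^n$ introduced in Lemma~\ref{H is proper}. That lemma already supplies convexity, weak* lower semicontinuity, and properness of $\mathcal{H}^n$, together with the key estimate $\inf_{\V^*}\mathcal{H}^n\leq 0$. What is missing is sequential weak* compactness of the sublevel sets of $\mathcal{H}^n$ in $\V^*$, which is the main obstacle.

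To establish this coercivity, I assume inductively $U^{n-1}\in\dom\E$ (with $U^0=U_0\in\dom\E$ as the base case) and pick any $U\in\V^*$ with $\mathcal{H}^n(U)\leq c$ for some fixed $c\in\R$. The condition on $N$ ensures $\tau\tilde{\mathcal K}(0)\leq 1$, so $0\in\Y_\tau$; testing the inner supremum defining $\mathcal{H}^n$ with $\Phi=0$ yields
\[
\E(U)+\tau\Psi^n(U)\leq c+\E(U^{n-1}).
\]
Applying the lower bound~\eqref{eq:ass.K.lowerbound} from~\ref{ass:K.conv} at $\Phi=0$ produces $\Psi^n(U)\geq -(c(0)+\tilde{\mathcal K}(0))\,\E(U)-c(0)$, and inserting this gives
\[
\bigl(1-\tau(c(0)+\tilde{\mathcal K}(0))\bigr)\E(U)\leq c+\E(U^{n-1})+\tau c(0).
\]
Taking $N$ large enough that the prefactor on the left is strictly positive (which is compatible with the eventual passage $N\to\infty$ in the convergence proof), I conclude that $\E(U)$ is bounded by a constant depending only on $c$ and $\E(U^{n-1})$. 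The coercivity estimate~\eqref{eq:coercive} then supplies a uniform bound on $\|U\|_{\V^*}$. By separability of $\V$ from~\ref{space as}, the sequential Banach--Alaoglu theorem renders the sublevel sets of $\mathcal{H}^n$ sequentially weakly* compact in $\V^*$.

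With coercivity in hand, a minimizing sequence for $\mathcal{H}^n$ is sequentially weakly* precompact, and the weak* lower semicontinuity from Lemma~\ref{H is proper} delivers a minimizer $U^n\in\V^*$ via the direct method. Since $\mathcal{H}^n(U)=+\infty$ whenever $U\notin\dom\E$ (as $\mathcal{F}^n(U|\Phi)=+\infty$ there by definition), the minimizer automatically lies in $\dom\E$, so the induction continues and produces the full tuple $(U^0,\dots,U^N)$. Finally, the pointwise bound~\eqref{eq:Fn.leq0} is immediate from minimality and the second conclusion of Lemma~\ref{H is proper}:
\[
\sup_{\Phi\in\Y_\tau}\mathcal{F}^n(U^n|\Phi)=\mathcal{H}^n(U^n)=\inf_{U\in\V^*}\mathcal{H}^n(U)\leq 0,
\]
which completes the argument.
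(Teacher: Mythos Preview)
Your approach is essentially the same as the paper's: both apply the direct method to $\mathcal{H}^n$, citing Lemma~\ref{H is proper} for weak* lower semicontinuity and the bound $\inf_{\V^*}\mathcal{H}^n\leq 0$, and deducing~\eqref{eq:Fn.leq0} from minimality. The only real difference is that the paper dispatches coercivity of $\mathcal{H}^n$ in a single clause (``follows from~\ref{energy as}''), whereas you supply the actual argument---testing with $\Phi=0$ and invoking~\eqref{eq:ass.K.lowerbound}---and correctly note that this forces $1-\tau(c(0)+\tilde{\mathcal K}(0))>0$, i.e.\ a slightly stronger constraint on $N$ than the stated hypothesis $\tilde{\mathcal K}(0)<N$. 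This strengthening is harmless for the eventual passage $N\to\infty$, and the paper is tacitly relying on the same estimate; your version is simply more explicit about it (and about the inductive step $U^{n-1}\in\dom\E$).
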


\begin{proof} 
    The minimization problem \eqref{min problem} is equivalent to finding a unique minimizer $U^n$ of the functional $\mathcal{H}^n$ from Lemma \ref{H is proper} for $n=1,\dots,N$.
    Moreover, these minimizers directly satisfy~\eqref{eq:Fn.leq0}.
    Since $\mathcal{H}$ is convex and weakly* lower semicontinuous by Lemma~\ref{H is proper},
    level sets of $H$ are sequentially closed in the weak* topology. Moreover, since $\mathcal{H}$ is coercive,  which follows from~\ref{energy as},
    level sets are also bounded in $\V^*$. 
    The Banach--Alaoglu Theorem implies that level sets are sequentially compact with respect to the weak* topology.
    Since $\mathcal{H}$ is also convex by Lemma~\ref{H is proper},    
    the existence of a solution to the minimization problem \eqref{min problem}
    follows from~\cite[Prop. 1.48]{relax}. 
\end{proof}

From the solution $(U^0,\dots,U^N)$
to the minimization problem~\eqref{min problem},
we now introduce approximate solutions for the energy-variational solution $(U,E)$.
We define piecewise constant prolongations $\ol{U}^N\colon[0,T]\to\V^*$ 
together with the corresponding energy $\ol E^N\colon[0,T]\to[0,\infty)$ as
\begin{equation}\label{eq:prolongations}
    \ol{U}^N(t)=\begin{cases}
    U^0=U_0 \quad &\text{for} \quad t=0, \\
    U^n \quad &\text{for} \quad t \in (t^{n-1},t^n] ,
    \end{cases}
    \qquad\quad
    \ol E^N(t)=\begin{cases}
    \mathcal{E}(U_0) \quad &\text{for} \quad t=0, \\
    \mathcal{E}(U^n) \quad &\text{for} \quad t \in (t^{n-1},t^n] .
    \end{cases}
\end{equation}

To obtain a variational problem satisfied by $(\ol{U}^N,\ol E^N)$,
we introduce piecewise constant and piecewise linear approximations of test functions.
For functions $\chi \in \mathcal{C}([0,T];X)$ where $X$ is $\R$ or $\Y$, let us define
\begin{align*}
    \overline{\chi}^N(t)= \begin{cases}
    \chi(0) \quad &\text{for} \quad t=0 \\
    \chi(t^n) \quad &\text{for} \quad t \in (t^{n-1},t^n] ,
    \end{cases}
    \qquad\quad
    \underline{\chi}^N(t)= \begin{cases}
    \chi(t^{n-1}) \quad &\text{for} \quad t \in [t^{n-1},t^n
    ),\\
    \chi(T) \quad &\text{for} \quad t=T ,
      \end{cases} 
\end{align*}
and
\[
    \hat{\chi}^N(t)= \frac{\chi(t^n)-\chi(t^{n-1})}{\tau}(t-t^{n-1})+\chi(t^{n-1}) \quad \text{for} \quad t \in [t^{n-1},t^n],
\]
for all $n \in \{1,\ldots,N\}$.

\begin{lemma}
For each $N\in\N$, the approximate solution $(\ol{U}^N,\ol E^N)$ satisfies
\begin{equation}\label{eq:envar.approximate}
\begin{aligned}
    \int_0^T -\partial_t \hat{\phi}^N \Big[\ol E^N - \langle \ol{U}^N, \underline{\Phi}^N\rangle \Big] 
    +\overline{\phi}^N\langle \ol{U}^N,\partial_t \hat{\Phi}^N\rangle
    +\underline{\phi}^N\Big[\Psi(t,\ol{U}^N) &-\langle A(t,\ol{U}^N), \underline{\Phi}^N\rangle\Big]\dd{t} \\  
    &\leq\phi(0)\big[\mathcal{E}(U_0)-\langle U_0, \Phi(0)\rangle\big]
\end{aligned}
\end{equation}
for all $\Phi \in \mathcal{C}^1([0,T];\Y_\tau)$ 
and
$\phi \in \mathcal{C}^1_c([0,T))$ such that $\phi \geq 0$.
\end{lemma}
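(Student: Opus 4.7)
The plan is to derive~\eqref{eq:envar.approximate} from the discrete minimization inequalities of Lemma~\ref{ex min1} by summation by parts. Fix $\Phi \in \mathcal{C}^1([0,T];\mathbbm{Y}_\tau)$ and $\phi \in \mathcal{C}^1_c([0,T))$ with $\phi\geq 0$, and set $\Phi^n := \Phi(t^n) \in \mathbbm{Y}_\tau$ and $\phi^n := \phi(t^n)$. Since $\Phi^{n-1}$ is admissible as a test in the discrete scheme, Lemma~\ref{ex min1} yields, for each $n=1,\dots,N$,
\begin{equation*}
\mathcal{F}^n(U^n|\Phi^{n-1}) = [\mathcal{E}(U^n)-\mathcal{E}(U^{n-1})] + \tau \Psi^n(U^n) - \langle U^n - U^{n-1}, \Phi^{n-1}\rangle - \tau \langle A^n(U^n),\Phi^{n-1}\rangle \leq 0.
\end{equation*}
These are the building blocks to be matched after expanding the integrals on the left of~\eqref{eq:envar.approximate}.

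Next I compute these integrals term by term. On each subinterval $(t^{n-1},t^n)$ of length $\tau$, the interpolants reduce to constants, $\ol{U}^N = U^n$, $\ol{E}^N = \mathcal{E}(U^n)$, $\underline{\Phi}^N = \Phi^{n-1}$, $\underline{\phi}^N = \phi^{n-1}$, $\overline{\phi}^N = \phi^n$, while $\partial_t \hat{\phi}^N = (\phi^n-\phi^{n-1})/\tau$ and $\partial_t \hat{\Phi}^N = (\Phi^n-\Phi^{n-1})/\tau$. Integrating over $(t^{n-1},t^n)$ and summing, and using the algebraic identity
\begin{equation*}
(\phi^n - \phi^{n-1})\langle U^n,\Phi^{n-1}\rangle + \phi^n\langle U^n, \Phi^n - \Phi^{n-1}\rangle = \phi^n\langle U^n, \Phi^n\rangle - \phi^{n-1}\langle U^n, \Phi^{n-1}\rangle,
\end{equation*}
the left-hand side of~\eqref{eq:envar.approximate} collapses to
\begin{equation*}
\sum_{n=1}^N \Bigl\{-(\phi^n-\phi^{n-1})\mathcal{E}(U^n) + \phi^n\langle U^n,\Phi^n\rangle - \phi^{n-1}\langle U^n, \Phi^{n-1}\rangle + \tau\phi^{n-1}\bigl[\Psi^n(U^n) - \langle A^n(U^n),\Phi^{n-1}\rangle\bigr]\Bigr\}.
\end{equation*}

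The final step is discrete integration by parts, crucially exploiting $\phi^N = \phi(T) = 0$, which holds since $\phi$ has compact support in $[0,T)$. A direct reindexing gives
\begin{align*}
-\sum_{n=1}^N (\phi^n-\phi^{n-1})\mathcal{E}(U^n) &= \sum_{n=1}^N \phi^{n-1}[\mathcal{E}(U^n)-\mathcal{E}(U^{n-1})] + \phi^0\mathcal{E}(U^0),\\
\sum_{n=1}^N\bigl[\phi^n\langle U^n, \Phi^n\rangle - \phi^{n-1}\langle U^n,\Phi^{n-1}\rangle\bigr] &= -\phi^0\langle U^0,\Phi^0\rangle - \sum_{n=1}^N \phi^{n-1}\langle U^n - U^{n-1},\Phi^{n-1}\rangle.
\end{align*}
Substituting these into the expression above, the entire left-hand side of~\eqref{eq:envar.approximate} reduces to
\begin{equation*}
\sum_{n=1}^N \phi^{n-1}\,\mathcal{F}^n(U^n|\Phi^{n-1}) + \phi(0)\bigl[\mathcal{E}(U_0) - \langle U_0, \Phi(0)\rangle\bigr],
\end{equation*}
and the conclusion is immediate since $\phi^{n-1}\geq 0$ and each $\mathcal{F}^n(U^n|\Phi^{n-1})\leq 0$.

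I expect no conceptual obstacle here; the proof is essentially a careful bookkeeping exercise. The main delicacy is keeping track of which piecewise interpolant (left-constant, right-constant, or piecewise linear) contributes in each of the three integrals so that the telescoping produces exactly the discrete Euler--Lagrange inequality and the only surviving boundary term is at $t=0$. The cancellation of the boundary term at $t=T$ is precisely what forces the support assumption $\phi \in \mathcal{C}^1_c([0,T))$, while the admissibility $\Phi(t^{n-1})\in\mathbbm{Y}_\tau$ of the pointwise evaluations is automatic from $\Phi \in \mathcal{C}^1([0,T];\mathbbm{Y}_\tau)$.
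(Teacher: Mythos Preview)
Your proof is correct and follows essentially the same approach as the paper: both arguments start from the discrete inequality $\mathcal{F}^n(U^n|\Phi^{n-1})\leq 0$ of Lemma~\ref{ex min1}, multiply by $\phi^{n-1}\geq 0$, sum, and relate the result to the integral expression via discrete integration by parts using $\phi^N=0$. The only cosmetic difference is the direction of the computation---the paper starts from the summed discrete inequality and rewrites it as the integral, whereas you expand the integral first and reduce it to the sum---but the content is identical.
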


\begin{proof}
By Lemma \ref{ex min1}, we have the inequality
    \begin{equation}\label{disc var1}
  \mathcal{F}(U^n,\Phi)= \mathcal{E}(U^n)+\tau \Psi^n(U^n)-\mathcal{E}(U^{n-1})-\langle U^n-U^{n-1},\Phi \rangle
    -\tau \langle A^n
    (U^n),\Phi \rangle \leq 0 
    \end{equation}
    for all $\Phi\in \Y_\tau$. 
For  $\Phi \in \mathcal{C}^1([0,T];\Y_\tau)$ 
and $\phi \in \mathcal{C}^1_c([0,T))$ with $\phi\geq 0$,
we define $\phi^n:=\phi(t^n)$ and $\Phi^n=\Phi(t^n)$. 
In order to obtain~\eqref{eq:envar.approximate} from the discrete formulation~\eqref{disc var1} for each $n \in \{1,\ldots,N\}$, we use $\Phi=\Phi^{n-1}$ in~\eqref{disc var1}, 
multiply with $\phi^{n-1}\geq 0$, and sum over $n=0,\dots,N$ to obtain 
\begin{align}\label{discrete ineq}
 \sum_{n=1}^N \Big[\phi^{n-1} &\left[E^n-E^{n-1}+\tau\Psi^n(U^n)\right] - \phi^{n-1}\langle U^n-U^{n-1},\Phi^{n-1} \rangle -\tau\phi^{n-1}\langle A^n(U^n),\Phi^{n-1} \rangle  \Big]\leq 0 .
\end{align}
Performing discrete integration by parts and dividing by $\tau >0$, we obtain
\[
\begin{aligned}
    -\sum_{n=1}^N \left(\frac{\phi^n-\phi^{n-1}}\tau \right)E^n +\phi^{n-1}\Psi^n(U^n)+\frac{\phi^n-\phi^{n-1}}{\tau}\langle U^n,\Phi^{n-1}\rangle  -\phi^{n} \langle U^n,\frac{ \Phi^n-\Phi^{n-1}}{\tau}\rangle& \\
    -\sum_{n=1}^N \phi^{n-1} \langle A^n(U^n),\Phi^{n-1} \rangle  -\phi^0\left(E^0-\langle U^0,\Phi^0\rangle \right)  &\leq 0, 
\end{aligned}
\]
where we used that $\phi^N=\phi(T)=0$. 
Inserting the definition of the prolongations, we arrive at~\eqref{eq:envar.approximate}.
\end{proof}

\subsubsection{Convergence of approximate solutions}

We now derive convergence properties
for the approximate solutions $\{(\ol{U}^N, E^N)\}_{N \in \N}$,
which will finally allow us to pass to the limit in~\eqref{eq:envar.approximate}.

\begin{lemma}[Convergence of prolongations]
There exist elements
\[
 U \in L^\infty_{w^*}(0,T;\mathbbm{V}^*) \cap \mathrm{BV}([0,T]; \Y^*),
 \qquad
 E\in\mathrm{BV}([0,T])
 \]
such that the sequence $\{(\ol{U}^N, \ol E^N)\}_{N \in \N}$ of approximate solutions, defined in~\eqref{eq:prolongations},
satisfies the following convergences (up to a subsequence):
\begin{alignat}{2}
    \ol{U}^N &\stackrel{*}{\rightharpoonup} U  \quad &&\text{in } L^\infty_{w^*}(0,T;\mathbbm{V}^*)\cap\BV([0,T];\Y^*), \label{conv u}\\
    \ol{U}^N(t) &\stackrel{*}{\rightharpoonup} U(t) \quad &&\text{in }\V^*\text{ for all~}t\in[0,T], 
    \label{eq:conv.U.strong}\\
      \ol E^N &\stackrel{*}{\rightharpoonup}E \quad &&\text{in } \mathrm{BV}([0,T]), \label{conv e}\\
    \ol {E}^N &\rightarrow {E} \quad &&\text{strongly in } L^1(0,T),
    \label{eq:conv.E.strong}
    \\
    \ol {E}^N(t) &\rightarrow {E}(t) \quad &&\text{for all } t\in[0,T].
    \label{eq:conv.E.ptw}
\end{alignat}
\end{lemma}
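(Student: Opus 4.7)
My plan is to mimic the \textit{a priori} bounds from Proposition~\ref{prop:aprioribounds} at the time-discrete level and then to extract weak* limits via the Banach--Alaoglu and Helly selection theorems. The starting point is the discrete energy inequality $\mathcal F^n(U^n|0)\leq 0$ obtained from Lemma~\ref{ex min1} by testing with $\Phi=0$, which reads $\mathcal E(U^n)-\mathcal E(U^{n-1})+\tau\Psi^n(U^n)\leq 0$. Combined with the lower bound from~\ref{ass:K.conv}, namely $\Psi^n(U^n)\geq -c(0)(\mathcal E(U^n)+1)-\tilde{\mathcal K}(0)\mathcal E(U^n)$, the discrete Gronwall lemma produces a uniform bound on $\mathcal E(U^n)=\ol E^N$ in $L^\infty(0,T)$, and the coercivity from~\ref{energy as} then gives a uniform bound on $\ol U^N$ in $L^\infty_{w^*}(0,T;\V^*)$.

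For the BV bounds, the absence of a sign on the discrete differences of $E^n$ is handled by splitting into positive and negative parts: $(E^n-E^{n-1})^+$ is controlled directly by the above inequality combined with the already derived $L^\infty$ bound, while
\[
\sum_{n=1}^N (E^n-E^{n-1})^- = \sum_{n=1}^N (E^n-E^{n-1})^+ - (E^N-E^0)
\]
is then bounded as well, yielding $\|\ol E^N\|_{\BV([0,T])}\leq C$. For $\ol U^N$ in $\Y^*$, I test $\mathcal F^n(U^n|\pm\Phi)\leq 0$ with $\Phi\in\Y$ satisfying $\|\Phi\|_\Y\leq R$ for some fixed $R<\alpha$ chosen so that $B_R\subset\dom\partial\mathcal E^*$ (by Proposition~\ref{prop:Estar}) and such that $\tilde{\mathcal K}(\Phi)\leq 1/\tau$ for all $N$ large enough, using that $\tilde{\mathcal K}$ maps bounded sets to bounded sets. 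Adding and subtracting $\tau\tilde{\mathcal K}(\Phi)\mathcal E(U^n)$ and invoking~\ref{ass:K.conv}, exactly as in the proof of Proposition~\ref{prop:aprioribounds}, gives a uniform bound on $\sum_n\|U^n-U^{n-1}\|_{\Y^*}$, hence the desired BV bound for $\ol U^N$.

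The sequential Banach--Alaoglu theorem then yields, up to a (not relabeled) subsequence, the weak* convergences~\eqref{conv u} and~\eqref{conv e}. A Helly-type selection principle for vector-valued BV functions, see~\cite[Theorem~B.5.10]{mielkeroubicek2015ris}, produces pointwise weak* convergence $\ol U^N(t)\stackrel{*}{\rightharpoonup} U(t)$ in $\Y^*$ for every $t\in[0,T]$, and its scalar counterpart gives $\ol E^N(t)\to E(t)$ for every $t$, proving~\eqref{eq:conv.E.ptw}. A sub-subsequence argument exploiting the uniform $L^\infty(0,T;\V^*)$ bound and the density of $\Y\hookrightarrow\V$, exactly as at the end of the proof of Proposition~\ref{prop p v}, upgrades the pointwise convergence from $\Y^*$ to $\V^*$ and establishes~\eqref{eq:conv.U.strong}. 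The strong $L^1$ convergence~\eqref{eq:conv.E.strong} is then a consequence of the compact embedding $\BV([0,T])\hookrightarrow L^1(0,T)$.

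The main technical obstacle is the BV bound on $\ol E^N$, since $\Psi$ is not signed; the positive-part identity above, together with the telescoping of $\sum_n(E^n-E^{n-1})$, is what circumvents this. Once these bounds are in hand, the remaining convergences are standard consequences of the Banach--Alaoglu and Helly theorems, mirroring the arguments already used for Proposition~\ref{prop p i} and Proposition~\ref{prop p v}.
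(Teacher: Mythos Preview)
Your proposal is correct and follows essentially the same route as the paper: derive the discrete $L^\infty$ and $\BV$ bounds by mimicking Proposition~\ref{prop:aprioribounds} at the time-discrete level, then extract limits via Banach--Alaoglu, the generalized Helly selection theorem of~\cite[Theorem~B.5.10]{mielkeroubicek2015ris}, the compact embedding $\BV([0,T])\hookrightarrow L^1(0,T)$, and the $\Y^*$-to-$\V^*$ upgrade argument from Proposition~\ref{prop p v}. Your positive/negative-parts telescoping for the $\BV$ bound on $\ol E^N$ is a minor variant of the monotone-plus-absolutely-continuous decomposition the paper inherits from Proposition~\ref{prop:aprioribounds}, but the overall strategy and all remaining steps coincide.
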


\begin{proof} 
To first derive suitable bounds on $(\ol{U}^N,\ol E^N)$,
we make use of the discrete inequality~\eqref{discrete ineq}
derived in the previous proof.
Let $k,\ell\in\{0,\dots,N\}$ with $k<\ell$. 
Using $\phi$ with $\phi^{n-1}= 1$ for $k+1\leq n\leq \ell$
and $\phi^{n-1}= 0$ else, we obtain 
\[
 E^\ell-E^{k}+\sum_{n=k+1}^\ell \Big[\tau\Psi^n(U^n) - \langle U^n-U^{n-1},\Phi^{n-1} \rangle -\tau\langle A^n(U^n),\Phi^{n-1} \rangle  \Big]\leq 0 .
\]
Starting from this inequality, we can now proceed in the same way as in the proof
of Proposition~\ref{prop:aprioribounds}
to derive a uniform bound of the form 
$\|\ol E^N\|_{\BV([0,T])}+\|\ol{U}^N\|_{L^\infty_{w^*}(0,T;\V^*)}+\|\ol{U}^N\|_{\BV([0,T];\Y^*)}\leq C$.
By the sequential form of the Banach--Alaoglu theorem,
we conclude the existence of limit functions $U$ and $E$
such that, up to a subsequence,~\eqref{conv u} and~\eqref{conv e} follow.
The compact embedding $\mathrm{BV}([0,T]) \hookrightarrow L^1(0,T)$ (e.g., see~\cite[Theorem 5.5]{evansgariepy}) 
provides the convergence~\eqref{eq:conv.E.strong},
and~\eqref{eq:conv.E.ptw} follows from Helly's selection theorem.
To show~\eqref{eq:conv.U.strong},
we use the generalized Helly's selection theorem 
from~\cite[Theorem B.5.10]{mielkeroubicek2015ris}
to obtain a (not relabeled) subsequence such that 
$\ol{U}^N(t)\stackrel{*}{\rightharpoonup}U(t)$ in $\Y^*$ for all $t\in[0,T]$.
Combining this convergence with the boundedness of $\{\ol{U}^N\}_{N\in\N}$
in $L^\infty_{w^*}(0,T;\V^*)$,
we can argue as in the proof of Proposition~\ref{prop p v}
that we even have $\ol{U}^N(t)\stackrel{*}{\rightharpoonup}U(t)$ in $\V^*$,
which is~\eqref{eq:conv.U.strong}.
\end{proof}

As the final step, we pass to the limit $N\to\infty$ in~\eqref{eq:envar.approximate}
and show
that $(U,E)$ is an energy-variational solution.
The lower semicontinuity of the regularity weight $\mathcal K$ from~\ref{ass:K.lsc}
ensures that this limit passage towards the formulation~\eqref{envar def1} is possible.

\begin{proof}[Proof of Theorem~\ref{main theorem1}]
First of all, note that by the weak* lower semicontinuity of $\E$ 
and the convergence properties~\eqref{eq:conv.E.strong} 
and~\eqref{eq:conv.U.strong}, we have
$\E(U(t))\leq E(t)$ for all $t\in[0,T]$.
Moreover, one readily shows that the test functions satisfy
\begin{equation}\label{eq:conv.testfct}
\begin{aligned}
 \partial_t \hat{\Phi}^N &\rightarrow \partial_t \Phi, \quad &
 \underline{\Phi}^N &\rightarrow \Phi , \quad &
 \nabla \underline{\Phi}^N &\rightarrow \nabla \Phi \quad &
 \text{in }\mathbbm{Y}
\text{ uniformly in }[0,T], \\
\partial_t \hat{\phi}^N &\rightarrow \partial_t \phi, \quad &
\overline{\phi}^N &\rightarrow \phi, \quad &
\underline{\phi}^N&\rightarrow \phi \quad &
\text{uniformly in } [0,T],
\end{aligned}
\end{equation}
as $N\to\infty$.
Fix $\Phi \in \mathcal{C}^1([0,T];\Y\cap\dom\partial\E^*)$ 
and
$\phi \in \mathcal{C}^1_c([0,T))$ such that $\phi \geq 0$.
Then $\Phi \in \mathcal{C}^1([0,T];\Y_\tau)$ for all $\tau>0$ sufficiently large.
To pass to the limit inferior in~\eqref{eq:envar.approximate},
we examine each term in the inequality separately, according to its structure. For the terms where $(\ol{U}^N, \ol E^N)$ appears linearly, we can directly pass to the limit by~\eqref{conv u},~\eqref{conv e},~\eqref{eq:conv.E.strong} and~\eqref{eq:conv.testfct} to obtain
\[
    \lim_{N \rightarrow \infty} \int_0^T -\partial_t \hat{\phi}^N\left[ \ol E^N -\langle \ol{U}^N, \underline{\Phi}^N\rangle\right] +\overline{\phi}^N\langle \ol{U}^N,\partial_t \hat{\Phi}^N\rangle  \dd{t} 
    =\int_0^T - \partial_t \phi \left[E- \langle  U, \Phi\rangle \right] + \phi \langle U, \partial_t \Phi \rangle \dd{t} .
\]
For the remaining $N$-dependent terms on the left-hand side of~\eqref{eq:envar.approximate}, 
we make use of the lower semicontinuity induced by the regularity weight $\mathcal K$
from~\ref{ass:K.lsc}. We first write
\begin{align*}
    &\int_0^T \underline{\phi}^N\Big({\Psi}(t,\ol{U}^N) -\langle A(t,\ol{U}^N), \underline{\Phi}^N\rangle \Big) \dd{t}
    \\
    &\quad =\int_0^T \underline{\phi}^N\Big({\Psi}(t,\ol{U}^N) -\langle A(t,\ol{U}^N), \underline{\Phi}^N\rangle + \mathcal{K}(\underline{\Phi}^N) \E(\ol{U}^N) \Big) \dd{t}- \int_0^T\underline{\phi}^N\mathcal{K}(\underline{\Phi}^N)\ol E^N \dd{t} .
\end{align*}
The last term is linear in $\ol E^N$ and we can pass to the limit by~\eqref{eq:conv.E.strong} using the continuity of $\mathcal K$.
For the other integral, observe that the integrand has an integrable lower bound since
\begin{equation}
\label{eq:Fatou.lowerbound}
\begin{aligned}
    &\underline{\phi}^N\Big({\Psi}(t,\ol{U}^N) -\langle A(t,\ol{U}^N), \underline{\Phi}^N\rangle + \mathcal{K}(\underline{\Phi}^N) \E(\ol{U}^N)\Big)
    \\
    &\quad \geq
    -\|\phi\|_{L^\infty(0,T)}\Big(
    c(\underline\Phi^N)(\E(\ol{U}^N)+1) +\big|\mathcal K(\underline\Phi^N)-\tilde{\mathcal{K}}(\underline\Phi^N)\big|\mathcal{E}(\ol{U}^N)
    \Big)
\end{aligned}
\end{equation}
by~\ref{ass:K.conv}, where 
$c(\underline\Phi^N)$, $\mathcal K(\underline\Phi^N)$ and $\tilde{\mathcal{K}}(\underline\Phi^N)$
can be estimated by a constant only depending 
on an upper bound of $\|\Phi\|_{L^\infty(0,T;\Y)}$.
Therefore, we can apply Fatou's lemma to estimate 
\begin{align*}
    \liminf_{N \rightarrow \infty }&\int_0^T \underline{\phi}^N\left(\Psi(t,\ol{U}^N) -\langle A(t,\ol{U}^N), \underline{\Phi}^N\rangle +  \mathcal{K}(\underline{\Phi}^N) \E(\ol{U}^N) \right) \dd{t}  \\
    &\geq \int_0^T \liminf_{N \rightarrow \infty }  \underline{\phi}^N\left(\Psi(t,\ol{U}^N)-\langle A(t,\ol{U}^N), \underline{\Phi}^N\rangle +  \mathcal{K}(\underline{\Phi}^N) \E(\ol{U}^N)\right) \dd{t}.
\end{align*}
From the weak* convergence~\eqref{eq:conv.U.strong} pointwise in $[0,T]$
and the weak* lower semicontinuity assumed in~\ref{ass:K.lsc},
we further conclude 
\begin{align*}
     \liminf_{N \rightarrow \infty }  \underline{\phi}^N\left(\Psi(t,\ol{U}^N) -\langle A(t,\ol{U}^N), \underline{\Phi}^N\rangle +  \mathcal{K}(\Phi^N)\E(\ol{U}^N) \right) 
     \geq \phi\big(\Psi(t,U) -\langle A(t,U), \Phi\rangle + \KK\mathcal{E}(U) \big)
\end{align*}
in $[0,T]$, where we also use~\eqref{eq:conv.testfct}.
Combining the all these convergence results, we finally conclude 
from~\eqref{eq:envar.approximate} 
that the limiting pair $(U,E)$ satisfies the inequality
\[
\begin{aligned}
\int_0^T - \partial_t \phi \Big[E- \langle  U, \Phi\rangle \Big] + \phi \Big[ \langle U, \partial_t \Phi \rangle + \phi \big(\Psi(t,U) -\langle A(t,U), \Phi\rangle 
&+\KK \big[\E(U)-E\big] \Big] \dd{t} \\
&\leq\phi(0)\big(\mathcal{E}(U_0)-\langle U_0, \Phi(0)\rangle\big).
\end{aligned}
\]
Since $\phi \in \mathcal{C}^1_c([0,T))$ such that $\phi \geq 0$,
is arbitrary, 
this inequality is equivalent to 
the energy-variational inequality~\eqref{envar def1}
for a.a.~$s<t\in[0,T]$, including $s=0$ with 
$E(0+)- \langle  U(0+), \Phi(0)\rangle
=\mathcal{E}(U_0)-\langle U_0, \Phi(0)\rangle$,
see \cite[Lemma 2.5]{envarhyp}.
In particular, it holds $E(0+)=\E(U_0)$.
Since $\Phi \in \mathcal{C}^1([0,T];\Y\cap\dom\partial\E^*)$ is arbitrary,
we have shown that the limit pair $(U,E)$ is an energy-variational solution in the sense of Definition~\ref{sol def} with $E(0+)=\E(U_0)$.
\end{proof}

\subsection{Selection criteria}
\label{subsec:selection}

While the existence of energy-variational solutions is ensured by Theorem~\ref{main theorem1},
the solution set can multi-valued in general.
This motivates to introduce a additional admissibility criteria
to select solutions with certain properties.
In what follows, we assume that the assumptions \ref{space as}-\ref{ass:K.conv} are fulfilled, so that the solution set is nonempty
due to the existence result from Theorem~\ref{main theorem1}.

We start with the following criterion that selects an energy-variational solution $(U,E)$ in such a way that the energy defect $E-\E(U)$ vanishes at countably many 
selected points in time. 
Note that the occurring point evaluations make sense, compare Proposition~\ref{prop p v}.
A similar criterion with finitely many points was introduced in~\cite[Prop.~6]{envar}.

\begin{theorem}\label{thm points}
    Let $\{t_i\}_{i \in \mathbbm{N}} \subset [0,T]$ be a sequence of time points such that $t_i < t_{i+1}$ for all $  i \in \mathbbm{N} $. Then there exists an energy-variational solution $(U,E)$ with $\E(U(t_i))=E(t_i)$ for all $i \in \mathbbm{N} $. 
\end{theorem}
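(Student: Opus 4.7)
The plan is to construct inductively a sequence $(U^{(n)},E^{(n)})_{n\in\mathbbm{N}}$ of energy-variational solutions on $[0,T]$, each realizing the vanishing-defect condition at $t_1,\ldots,t_n$, and then to extract a weak*-convergent subsequence whose limit inherits the property at every $t_i$. First, I would let $(U^{(0)},E^{(0)})$ be any energy-variational solution produced by Theorem~\ref{main theorem1} with initial datum $U_0$. Given $(U^{(n-1)},E^{(n-1)})$, the one-sided limit $U^{(n-1)}(t_n-)\in\mathbbm{V}^*$ exists by Proposition~\ref{prop point ii} and lies in $\dom\mathcal{E}$ thanks to the weak* lower semicontinuity of $\mathcal{E}$ combined with the pointwise bound $\mathcal{E}(U^{(n-1)})\leq E^{(n-1)}$. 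Applying Theorem~\ref{main theorem1} on $[t_n,T]$ with this left limit as initial datum yields an energy-variational solution $(\tilde U,\tilde E)$ with $\tilde E(t_n+)=\mathcal{E}(U^{(n-1)}(t_n-))$. Since $\mathcal{E}(U^{(n-1)}(t_n-))\leq E^{(n-1)}(t_n-)$, the compatibility condition in the semi-flow Proposition~\ref{prop p iv} is satisfied, and the concatenation $(U^{(n)},E^{(n)})$ of $(U^{(n-1)},E^{(n-1)})|_{[0,t_n]}$ with $(\tilde U,\tilde E)$ is again an energy-variational solution on $[0,T]$.

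The crucial structural observation is a stabilization property of this construction: each iteration modifies the solution only on $[t_n,T]$, so for any fixed index $i$ and every $n\geq i$ the pair $(U^{(n)},E^{(n)})$ coincides with $(U^{(i)},E^{(i)})$ on $[0,t_{i+1})\ni t_i$. Fixing right-continuous representatives throughout, the value at $t_i$ is thereby independent of $n\geq i$; and at step $i$ the concatenation produces $E^{(i)}(t_i)=\mathcal{E}(U^{(i-1)}(t_i-))=\mathcal{E}(U^{(i)}(t_i))$, so the identity $E^{(n)}(t_i)=\mathcal{E}(U^{(n)}(t_i))$ holds for every $n\geq i$. I would then invoke Proposition~\ref{prop p i} to extract a weak*-convergent subsequence $(U^{(n)},E^{(n)})\rightharpoonup^\ast (U^\ast,E^\ast)$, which is itself an energy-variational solution. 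By the generalized Helly selection theorem already used in the proof of Proposition~\ref{prop p i}, together with a diagonal extraction over the countable set $\{t_i\}$, I may further arrange $U^{(n)}(t_i)\rightharpoonup^\ast U^\ast(t_i)$ in $\mathbbm{V}^*$ and $E^{(n)}(t_i)\to E^\ast(t_i)$ for every $i$. Combining these pointwise limits with the stabilization observation, the sequences $U^{(n)}(t_i)$ and $E^{(n)}(t_i)$ are eventually constant, so $U^\ast(t_i)=U^{(i)}(t_i)$ and $E^\ast(t_i)=\mathcal{E}(U^{(i)}(t_i))=\mathcal{E}(U^\ast(t_i))$, yielding the required identity at every $t_i$.

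The main obstacle I anticipate is precisely the mismatch between the direction of weak* lower semicontinuity of $\mathcal{E}$ and the equality we need: a naive compactness argument would only give $\mathcal{E}(U^\ast(t_i))\leq \liminf_n \mathcal{E}(U^{(n)}(t_i))\leq E^\ast(t_i)$, which is already built into the definition of an energy-variational solution and does not suffice. What rescues the argument is the non-standard feature of the iterative scheme in which modifications occur only on right-sided intervals $[t_n,T]$; this stabilization turns $U^{(n)}(t_i)$ and $E^{(n)}(t_i)$ eventually constant, upgrading lower semicontinuity to the desired equality in the limit. Some care must be taken to fix consistent right-continuous representatives throughout, since the identity $E^{(n)}(t_i)=\mathcal{E}(U^{(n)}(t_i))$ refers to the specific value picked up by the concatenation at each step, rather than to an arbitrary member of the $\mathrm{BV}$ equivalence class.
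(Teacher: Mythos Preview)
Your proof is correct and follows essentially the same inductive concatenation strategy as the paper, using Theorem~\ref{main theorem1} to restart at each $t_n$ and the semi-flow property (Proposition~\ref{prop p iv}) to glue, so that the solution is frozen on $[0,t_n)$ while the defect is forced to vanish at $t_n$. Your treatment of the passage to the countable limit via the stabilization observation (eventually constant values at each $t_i$) together with Proposition~\ref{prop p i} is more explicit than the paper's, which leaves this step implicit; note that because of stabilization you could in fact define the limit directly on $[0,\sup_i t_i)$ without invoking compactness, using Proposition~\ref{prop p i} only to verify that the resulting object is again an energy-variational solution on all of $[0,T]$.
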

\begin{proof}
     We construct the solution inductively. Without loss of generality, let $t_0=0$. Let $i \in \{0,1,\ldots \}$. For $t_0=0$ the existence of such a solution follows from Theorem \ref{main theorem1}, because $E(0+)=\E(U_0)$. Assume that there exists an energy-variational solution $(U^n,E^n)$ in $[0,t_{i+1}]$ such that $\E(U^n(t_i))=E^n(t_i)$ for all $i \leq n$. Next, consider $U^n(t_{i+1})$ with $t_{i+1}>t_i$. Theorem \ref{main theorem1} provides the existence of a solution $(\bar{U},\bar{E})$, such that $\bar{U}(t_{i+1})=U^n(t_{i+1})$ and $\E(\bar{U}(t_{i+1}))=E^n(t_{i+1})$. Then the concatenation 
      \begin{equation*}
    (U^{n+1},E^{n+1}):=\begin{cases}
    (U^n,E^n) \quad &\text{if} \quad t <t_{i+1}, \\
        (\bar{U},\bar{E}) \quad &\text{if} \quad t \geq t_{i+1}, 
    \end{cases}
\end{equation*}
is again an energy-variational solution due to Proposition \ref{prop p iv}  and fulfills $\E(U^{n+1}(t_i))=E^{n+1}(t_i)$ for all $i\leq n+1$.
\end{proof}

Although the previous selection criterion yields a solution with vanishing energy defect 
at specified time points, this defect could still become large between such points.
The following result shows that one can also select an energy-variational solution
in such a way that the energy defect is smaller than any prescribed constant.
In particular, it shows that there are energy-variational solutions
that are close to weak solutions.
An analogous property was shown for dissipative weak solutions the Euler system in~\cite[Theorem 3.1]{selectEuler}. 

\begin{theorem}\label{thm eps sleeve}
    For all $\epsilon >0$ there exists an energy-variational solution $(U,E)$ such that 
    \[ 
    \E(U(t)) \leq E(t) \leq \E(U(t))+ \epsilon.\]
\end{theorem}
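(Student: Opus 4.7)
The proof uses an iterative restart strategy: starting from the energy-variational solution given by Theorem~\ref{main theorem1}, whenever the defect $D(t) := E(t) - \E(U(t))$ exceeds $\epsilon$ on a set of positive measure, we splice in a fresh solution from Theorem~\ref{main theorem1} to reset the defect locally to zero. The uniform $\BV$ bound on $E$ from Proposition~\ref{prop:aprioribounds} then forces termination after finitely many restarts.

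Concretely, let $(U^{(1)}, E^{(1)})$ be the solution from Theorem~\ref{main theorem1}, satisfying $E^{(1)}(0+) = \E(U_0)$, so $D^{(1)}(0+) = 0$. Inductively, suppose $(U^{(k)}, E^{(k)})$ is an energy-variational solution with initial datum $U_0$. If $D^{(k)}(t) < \epsilon$ for almost every $t \in [0,T]$, we are done. Otherwise, the set $\{t : D^{(k)}(t) \geq \epsilon\}$ has positive Lebesgue measure, and we take $\sigma_k$ to be a left Lebesgue density point of this set. Applying Theorem~\ref{main theorem1} on $[\sigma_k, T]$ with initial datum $U^{(k)}(\sigma_k-)$ provides a fresh solution $(\bar U, \bar E)$ with $\bar E(\sigma_k+) = \E(U^{(k)}(\sigma_k-))$. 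By Proposition~\ref{prop p iv}, the concatenation
\[
(U^{(k+1)}, E^{(k+1)})(t) := \begin{cases} (U^{(k)}(t), E^{(k)}(t)) & \text{if } t < \sigma_k, \\ (\bar U(t), \bar E(t)) & \text{if } t \geq \sigma_k, \end{cases}
\]
is again an energy-variational solution; the semiflow compatibility $\bar E(\sigma_k+) \leq E^{(k)}(\sigma_k-)$ reduces to $\E(U^{(k)}(\sigma_k-)) \leq E^{(k)}(\sigma_k-)$, which follows from $\E(U^{(k)}) \leq E^{(k)}$ a.e.\ together with weak* lower semicontinuity of $\E$. Proposition~\ref{prop point ii} then yields $D^{(k+1)}(t) \to 0$ as $t \searrow \sigma_k$, so any subsequent restart must occur strictly after $\sigma_k$.

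Termination follows from a total variation count. Each iterate $(U^{(k)}, E^{(k)})$ is an energy-variational solution with the same initial data and $E^{(k)}(0+) = \E(U_0)$, so Proposition~\ref{prop:aprioribounds} yields a uniform bound $\|E^{(k)}\|_{\BV([0,T])} \leq C$ with $C$ depending only on $\E(U_0)$. The restart at $\sigma_k$ introduces a downward jump $E^{(k+1)}(\sigma_k-) - E^{(k+1)}(\sigma_k+) = D^{(k)}(\sigma_k-)$, which, as explained below, is at least $\epsilon$. Hence each restart contributes at least $\epsilon$ to the total variation of the next iterate, so at most $\lfloor C/\epsilon \rfloor$ restarts are possible, and the procedure terminates with the required solution.

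The principal technical obstacle is to verify the lower bound $D^{(k)}(\sigma_k-) \geq \epsilon$ on the left-limit defect at a Lebesgue density point $\sigma_k$. This rests on two ingredients. First, since $E^{(k)}\in\BV([0,T])$, the left limit $E^{(k)}(\sigma_k-)$ exists, and consequently
\[
\limsup_{t \nearrow \sigma_k} D^{(k)}(t) = E^{(k)}(\sigma_k-) - \liminf_{t \nearrow \sigma_k} \E(U^{(k)}(t)).
\]
Second, weak* lower semicontinuity of $\E$ at the weak* left limit $U^{(k)}(\sigma_k-)$ (which exists in $\V^*$ by Proposition~\ref{prop point ii}) gives $\liminf_{t \nearrow \sigma_k} \E(U^{(k)}(t)) \geq \E(U^{(k)}(\sigma_k-))$. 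Combining these identities with the density property $\limsup_{t \nearrow \sigma_k} D^{(k)}(t) \geq \epsilon$ yields $D^{(k)}(\sigma_k-) \geq \epsilon$, as needed for the BV-counting argument.
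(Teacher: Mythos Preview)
Your restart strategy is appealing, but the termination argument has a gap at the step ``$D^{(k+1)}(t)\to 0$ as $t\searrow\sigma_k$, so any subsequent restart must occur strictly after $\sigma_k$.'' The convergence $D^{(k+1)}(t)\to 0$ only controls the defect immediately to the right of $\sigma_k$; on $[0,\sigma_k)$ the concatenated solution coincides with the old one, so $D^{(k+1)}=D^{(k)}$ there. Since $\sigma_k$ is a \emph{left} density point of $\{D^{(k)}\ge\epsilon\}$, this set has positive measure in every interval $(\sigma_k-h,\sigma_k)$, and hence so does $\{D^{(k+1)}\ge\epsilon\}$. Nothing prevents $\sigma_{k+1}$ from landing in $[0,\sigma_k)$, in which case the restart at $\sigma_{k+1}$ overwrites the jump you created at $\sigma_k$, so that jump no longer contributes to the total variation of $E^{(k+2)}$. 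The BV count collapses.

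The two requirements you place on $\sigma_k$ are in fact incompatible. If $\sigma_k$ is chosen so that $\{D^{(k)}\ge\epsilon\}\cap[0,\sigma_k)$ is null (which would indeed force $\sigma_{k+1}>\sigma_k$), then $\sigma_k$ is the essential infimum of the bad set and cannot be a left density point; the left-limit defect $D^{(k)}(\sigma_k-)$ then has no reason to be $\ge\epsilon$ and may well vanish, so the jump-size lower bound fails. The paper sidesteps this obstruction non-constructively: it partially orders solutions by the maximal time up to which the defect stays below $\epsilon$, uses the sequential weak* compactness of the solution set (Proposition~\ref{prop p i}) to produce upper bounds for chains, and invokes Zorn's lemma to obtain a maximal element, which is then shown to satisfy the bound on all of $[0,T]$.
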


\begin{proof}
Let $(U,E)$ be an energy-variational solution satisfying Theorem \ref{main theorem1}. Then, let $t_\delta(U,E):=\sup \{t>0 \; |\;  E(s)\leq \E(U(s))+\delta \; \forall  s \leq t\}$. To see that $t_\delta(U,E)>0$, let us suppose the opposite, that $t_\delta(U)=0$. Then, for all $t>0$, there exists $s\in(0,T)$ such that $E(s) > \E(U(s))+\delta$. Let $s \searrow 0_+$. Then $E(s) - \E(U(s))>\delta>0$ implies that $\lim_{s \to 0_+}E(s) - \liminf_{s \to 0_+}\E(U(s)) \geq \delta >0$. However,  $E \in \mathrm{BV}([0,T])$, and $\liminf_{s \to 0_+}\E(U(s))=\E(U_0)$ by the same argument as in Proposition \ref{prop point ii}. Therefore we get $E(0_+)-\E(U_0) >0$, which contradicts the initial condition.  This shows that $t_\delta(U,E)>0$. Next, let us define an ordering such that $(U,E) \ll(V,F)$ if and only if $t_\delta(U,E)\leq t_\delta(V,F)$ and $(U,E)=(V,F)$ on $[0, t_\delta(U,E)]$. The solution set $\mathcal{S}(U_0)$ endowed with the relation $\ll$ is a partially ordered set. Let 
\[
(U^1,E^1) \ll (U^2,E^2) \ll \ldots \ll (U^n,E^n), \quad (U^i,E^i) \in \mathcal{S}(U_0) \quad \forall i=1,2,\ldots
\]
be an ordered chain. By Proposition~\ref{prop p i}, there exists a subsequence $(U^n, E^n)$  such that $U^n \rightharpoonup \tilde{U}$ in $L^\infty_{w^*}(0,T;\V^*) \cap \mathrm{BV}([0,T];\Y^*)$ and $E^n \stackrel{*}{\rightharpoonup}\tilde{E}$ in $\mathrm{BV}([0,T])$. It holds that $(\tilde{U}(t),\tilde{E}(t))=(U^n(t), E^n(t)) $ for all $ t < t_\delta(U^n,E^n)$, and for all $n=1,2,\ldots$. Now, either $t_\delta(\tilde{U},\tilde{E})=T$, meaning that $(\tilde{U},\tilde{E})$ is an upper bound for the chain, or $t_\delta(\tilde{U},\tilde{E})<T$ and we construct a concatenation 
\[
(U, E)(t, \cdot) =
\begin{cases}
(\tilde{U}, \tilde{E})(t, \cdot), & \text{for } 0 \leq t \leq t_\delta(\tilde{U},\tilde{E}), \\
(\hat{U}, \hat{E})(t - \tilde{T}, \cdot), & \text{for } t > t_\delta(\tilde{U},\tilde{E}).
\end{cases}
\]
where $(\hat{U}, \hat{E})$ is an energy-variational solution starting at time $t_\delta(\tilde{U},\tilde{E})$, with initial condition $U(t_\delta(\tilde{U},\tilde{E}))$, $ E(t_\delta(\tilde{U},\tilde{E}))$. Such constructed $(U,E)$ is an upper bound for the chain. By Zorn's lemma we conclude that there exists a solution in $\mathcal{S}(U_0)$ which is maximal with respect to relation $\ll$.
Either this solution, let us call it $(U,E)$, satisfies $t_\delta(U,E)=T$, or $t_\delta(U,E)<T$. The first case ends the proof, but in the latter case we can build another solution $(\tilde{U},\tilde{E})$, by concatenation as we did just above. The new solution would satisfy $(U,E) \ll (\tilde{U},\tilde{E})$, therefore contradicting the claim that $(U,E)$ with $t_\delta(U,E) <T$ was maximal.  
\end{proof}

While the previous two results focused on properties of the energy defect as a criterion for selection,
one may also select solutions by minimizing or maximizing 
certain functionals. 
Here we make use of the structural properties of the solution set,
established in Proposition~\ref{prop p iii} and Proposition~\ref{prop p i}.

\begin{theorem}
    \label{thm:selection.functional}
    Let $U_0\in\dom\E$, set $X:=L^\infty_{w^*}(0,T;\mathbbm{V}^*) \cap \mathrm{BV}([0,T]; \Y^*)\times\BV([0,T])$, and let
    $\mathcal I\colon X\to(-\infty,\infty]$ 
    be a weakly* lower semicontinuous functional.
    Then there exists an energy-variational solution $(U,E)$ that minimizes the functional among all energy-variational solutions with initial value $U_0$ and with $E(0+)=\E(U_0)$,
    that is,
    \[
    \mathcal I(U,E)=\min\big\{
    \mathcal I(V,F)\mid (V,F)\in \mathcal S(U_0,\E(U_0))
    \big\}.
    \]
    If $\mathcal K=\tilde{\mathcal K}$ and if $\mathcal I$ is strictly convex, then this minimizer is unique.
\end{theorem}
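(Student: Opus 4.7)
The plan is to apply the direct method of the calculus of variations to the set
\[
\mathcal T := \{(U, E) \in \mathcal S(U_0, \E(U_0)) \mid E(0+) = \E(U_0)\},
\]
which is nonempty by Theorem~\ref{main theorem1}. After reducing to the case $\inf_{\mathcal T}\mathcal I < \infty$, I take a minimizing sequence $\{(U^n, E^n)\} \subset \mathcal T$. Proposition~\ref{prop:aprioribounds} provides uniform bounds in $X$, and Proposition~\ref{prop p i} produces, along a (not relabeled) subsequence, a weak* limit $(U, E) \in \mathcal S(U_0, \E(U_0))$. The weak* lower semicontinuity of $\mathcal I$ then gives $\mathcal I(U, E) \leq \inf_{\mathcal T}\mathcal I$, so it only remains to verify $(U, E) \in \mathcal T$.

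The main obstacle is that the pointwise condition $E(0+) = \E(U_0)$ is not stable under weak* convergence in $\BV([0, T])$, so I cannot simply pass to the limit in the constraint. My approach is to extract this equality from the energy-variational inequality~\eqref{envar def1} itself. The bound $E(0+) \leq \E(U_0)$ is built into $\mathcal S(U_0, \E(U_0))$. For the reverse inequality, I test~\eqref{envar def1} with a time-independent $\Phi \equiv \phi \in \Y \cap \dom\partial\E^*$, fix $s = 0$, and let $t \searrow 0$. The integrability of $\Psi(\cdot, U)$, $\E(U)$ and $\langle A(\cdot, U), \phi\rangle$, guaranteed by Proposition~\ref{prop:aprioribounds} together with~\ref{ass:K.conv}, makes the time integral vanish, and the one-sided weak* limit $U(0+) \in \V^*$ from Proposition~\ref{prop point ii} yields
\[
\langle U_0 - U(0+), \phi\rangle \leq 0.
\]
Since $B_\alpha \subset \dom\partial\E^*$ by Proposition~\ref{prop:Estar}, this holds on a full neighborhood of the origin in $\Y$; the symmetry $\phi \mapsto -\phi$ and the density of $\Y$ in $\V$ force $U(0+) = U_0$ in $\V^*$, and weak* lower semicontinuity of $\E$ finally gives $E(0+) \geq \E(U(0+)) = \E(U_0)$.

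For the uniqueness claim under $\mathcal K = \tilde{\mathcal K}$, Proposition~\ref{prop p iii} yields convexity of $\mathcal S(U_0, \E(U_0))$, and $\mathcal T$ inherits convexity since the constraint $E(0+) = \E(U_0)$ is affine in $E$. Two distinct minimizers would then contradict the strict convexity of $\mathcal I$ via the standard midpoint argument, since the midpoint would lie in $\mathcal T$ and achieve a strictly smaller value of $\mathcal I$.
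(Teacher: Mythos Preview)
Your core argument is the same as the paper's: the direct method using nonemptiness (Theorem~\ref{main theorem1}) and sequential weak* compactness (Proposition~\ref{prop p i}) of the solution set, followed by uniqueness via convexity (Proposition~\ref{prop p iii}) and strict convexity of $\mathcal I$. The paper's own proof stops there, minimizing directly over $\mathcal S(U_0,\E(U_0))$ as in the displayed formula, without separately verifying that the minimizer satisfies $E(0+)=\E(U_0)$.

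You go further and try to show that the limit lies in your subset $\mathcal T$, and this extra step contains a gap. Passing to the limit $t\searrow 0$ in~\eqref{envar def1} at $s=0$ with $\Phi\equiv\phi$ actually yields
\[
E(0+)-E(0)+\langle U_0-U(0+),\phi\rangle\leq 0,
\]
so one only obtains $\langle U_0-U(0+),\phi\rangle\leq E(0)-E(0+)$. Choosing $\phi=0$ shows $E(0)\geq E(0+)$, so the right-hand side is nonnegative and cannot simply be dropped; the symmetry $\phi\mapsto-\phi$ then gives only $|\langle U_0-U(0+),\phi\rangle|\leq E(0)-E(0+)$, which does not force $U(0+)=U_0$ without an additional argument controlling the jump of $E$ at $0$. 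Since the paper itself does not isolate this point and treats the displayed minimum over $\mathcal S(U_0,\E(U_0))$ as the content of the theorem, your proof of that statement is already complete once you work directly on $\mathcal S(U_0,\E(U_0))$ and drop the detour through $\mathcal T$.
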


\begin{proof}
    The set $\mathcal S(U_0,\E(U_0))$ is non-empty by Theorem~\ref{main theorem1},
    and it is sequentially weakly* compact by Proposition~\ref{prop p i}.
    Therefore, any infimizing sequence has a limit in $S(U_0,\E(U_0))$,
    and by weak* lower semicontinuity of $\mathcal I$, 
    this limit is the asserted minimizer.
    Moreover, if $\mathcal K=\tilde{\mathcal K}$, 
    then $S(U_0,\E(U_0))$ is convex by Proposition~\ref{prop p iii},
    and the strict convexity of $\mathcal I$ implies the uniqueness of
    the minimizer.
\end{proof}

The standard example would be given by an integral functional
\[
\mathcal I(U,E)=\int_0^T j(t,U(t),E(t))\dd{t}
\]
for a suitable function $j\colon[0,T]\times\V^*\times[0,\infty)$
such that $j(t,\cdot,\cdot)$ is weakly* lower semicontinuous.
A simple choice would be to take $j(t,U,E)=E$ or $j(t,U,E)=\E(U)$, 
which would lead to the minimization of the auxiliary energy variable $E$
or the mechanical energy $\E(U)$
in the $L^1(0,T)$-norm.
By minimizing the energy, we maximize the dissipation,
so that these solutions might be considered maximally dissipative in a certain sense.
Of course, combinations of these properties, like $j(t,U,E)=\E(U)+E$, are possible.

A related choice that takes a different viewpoint would be the consideration 
of $j(t,U,E)=\E(U)-E$, the (negative) energy defect.
As seen in the existence proof in Subsection~\ref{subsec:existence},
the difference between $E$ and $\E(U)$ is due to the lack of strong convergence. 
Therefore, the energy defect quantifies to some extent the 
the oscillations and concentrations that make the difference between strong and weak convergence,
which may be regarded as a measure for turbulence in our fluid dynamical applications.
Minimizing the integral functional over $j(t,U,E)=\E(U)-E$ 
thus yields an energy-variational solution with maximal turbulence,
which adapts the concept from~\cite{EmilMaiximalTurb},
where maximally turbulent measure-valued solutions to the Euler equations were introduced.

Since the functional $\mathcal I$ takes into account global information in $[0,T]$,
minimal energy-variational solutions with respect to $\mathcal I$
need no longer satisfy the semi-flow property established in Proposition~\ref{prop p iv}.
However, in~\cite{breitfeireislhofmanova2020semiflowcompleteEuler,breitfeireslhofmanova2020semiflowisentropicEuler},
a semi-flow selection was established for dissipative weak solutions to the Euler equations
by successively minimizing functionals of the form
\[
\mathcal I(U,E)=\int_0^\infty e^{-\lambda t} F(U(t),E(t))\dd{t}
\]
for different $\lambda\in\R$ and a certain function $F$. 
Therefore, it might be possible to transfer this approach to the current setting
of energy-variational solutions.

\section{The Euler--Korteweg system}
\label{sec:eulerkorteweg}

In this section we provide an example for a system for which existence of energy-variational solutions can be shown by applying Theorem \ref{main theorem1},
and which is not treatable with the existence theory developed before in~\cite{envarhyp,viscoenvar},
namely the Euler--Korteweg system.

Let $\Omega \subset \R^d$, $d\in\N$, be a bounded Lipschitz domain and $T>0$.
We consider the Euler--Korteweg system
with a constant capillary coefficient and an adiabatic pressure law,
which reads
\begin{subequations}\label{au}
    \begin{align}
    \partial_t \rho + \diiv m &=0 \quad &&\text{in}\; \Omega \times (0,T) ,
    \label{eq:eulerkorteweg.cont}
    \\
   \partial_t m + \diiv\left( \frac{m \otimes  m}{\rho}\right) + \nabla( \rho^\gamma)&=\rho \nabla(\Delta \rho ) \quad &&\text{in} \;\Omega \times (0,T),  
   \label{eq:eulerkorteweg.mom}
   \\
   (\rho(0,\cdot),m(0,\cdot))&=(\rho_0,m_0) \quad &&\text{in} \;\Omega  .
\end{align}
Here, $\rho\colon(0,T)\times\Omega\to[0,\infty)$ and $m\colon(0,T)\times\Omega\to\R^d$
denote the density and the momentum of the fluid.
For simplicity, we have set all physical constants equal to $1$.
The system is equipped with impermeability boundary conditions 
\begin{align}\label{eq:eulerkorteweg.bdry}
  m \cdot n&=0{} = \nabla\rho\cdot n \quad \text{on} \quad (0,T) \times \partial \Omega  .
\end{align}\end{subequations}
Note that the momentum equation~\eqref{eq:eulerkorteweg.mom} can also be written as
\begin{equation}
\label{eq:eulerkorteweg.mom.K}
\partial_t m + \diiv\left( \frac{m \otimes  m}{\rho}\right) + \nabla( \rho^\gamma)=\diiv \mathbbm K
\quad \text{ in } \Omega \times (0,T)  
\end{equation}
with the Korteweg stress tensor 
$\mathbbm{K}= \Big( \diiv(\rho \nabla \rho) -\frac{1}{2}|\nabla \rho|^2\Big)\mathbbm{I}_d-\nabla \rho\otimes\nabla \rho$.
 
\subsection{Energy-variational framework}

To introduce a setting of energy-variational solutions to the Euler--Korteweg system, 
we define the total energy
\begin{equation}\label{eq:energy.ek}
\mathcal{E}(\rho,m)=\int_\Omega \eta(\rho(x),m(x))+\frac{|\nabla \rho(x)|^2}{2}\dd{x},
\end{equation}
where 
$\eta: \mathbbm{R} \times \mathbbm{R}^d \rightarrow [0,\infty]$ is given by 
\[
    \eta(\rho, m) = \begin{cases}
    \frac{|m|^2}{2\rho}+\frac{\rho^\gamma}{\gamma-1} \quad &\text{if} \quad \rho>0, \\
    0 \quad &\text{if}\quad (\rho,m)=(0,0),\\
    \infty \quad &\text{else}.
       \end{cases}
\]
We further define 
\begin{equation}\label{eq:space.testfct.mom}
    \mathcal{V}=\{\varphi \in \mathcal{C}^2(\overline{\Omega};\mathbbm{R}^d): \varphi\cdot n=0\text{ on } \partial\Omega\}.
\end{equation}
Then the regularity weight $\mathcal K\colon\mathcal V\to[0,\infty)$, which appears in the energy-variational formulation,
is chosen as 
\begin{equation}
\label{eq:K.lsc.ek}
\mathcal{K}(\varphi)
= 2\|(\nabla \varphi)_{\mathrm{sym},-}\|_{L^\infty(\Omega)}+\max\{(\gamma-1,1)\}\|(\diiv\varphi)_{-}\|_{L^\infty(\Omega)}.
\end{equation}

With these notation, 
we can introduce energy-variational solutions to~\eqref{au} by adapting 
Definition~\ref{sol def} as follows.

\begin{definition}\label{sol eu kor}
Let $(\rho_0,m_0)\in L^1(\Omega)\times L^1(\Omega;\R^3)$ such that $\E(\rho_0,m_0)<\infty$.
A triple $(\rho,m,E)\in L^\infty(0,T;H^1(\Omega) \cap L^\gamma(\Omega)) \times L^\infty(0,T;L^\frac{2\gamma}{\gamma -1}(\Omega;\R^d)) \times \mathrm{BV}([0,T])$ is called an energy-variational solution to the Euler--Korteweg system~\eqref{au} if $\mathcal{E}(\rho(t),m(t)) \leq E(t) $ for a.e.~$t \in (0,T)$ and  if
\begin{equation}
\label{eq:envar.ek}
\begin{aligned}
    \left[E- \int_\Omega \rho \, \psi + m \cdot\,\varphi \dd{x} \right]\bigg|_s^t &+\int_s^t \int_\Omega \rho \, \partial_t \psi+ m \cdot\nabla \psi + m \cdot \,\partial_t \varphi + \left(\frac{m \otimes m}{\rho}+ \rho^\gamma \mathbbm{I}_d\right):\nabla \varphi \dd{x} \dd{\tau} \\
      &+\int_s^t\int_\Omega  \rho\nabla \rho \cdot \nabla (\diiv
       \varphi) + 
 \frac{1}{2} |\nabla \rho|^2\diiv \varphi  + \nabla \rho \otimes \nabla \rho:\nabla \varphi \dd{x}\dd{\tau}\\
     &+ \int_s^t \mathcal{K}(\varphi)(\mathcal{E}(\rho,m)-E) \dd{\tau}\leq 0
\end{aligned}
\end{equation}
holds for all test functions $\psi\in\C^1([0,T];\C^1(\Omega))$
and $\varphi \in \mathcal{C}^1([0,T];\mathcal V) $,
and for a.e.~$ s<t$, including $s=0$ with $\rho(0)=\rho_0,\, m(0)=m_0$.
\end{definition}

One readily sees that~\eqref{eq:envar.ek} is derived 
by adding the energy inequality $E\big|^t_s\leq 0$ 
and the term with the regularity weight 
to a weak formulation of~\eqref{au},
which can be directly derived using the representation~\eqref{eq:eulerkorteweg.mom.K} of the momentum equation and integration by parts,
where we include the boundary conditions~\eqref{eq:eulerkorteweg.bdry} in a weak sense.

We apply the previously developed theory to show 
the existence of energy-variational solutions to the Euler--Korteweg system.
    
\begin{theorem}\label{thm:existence.EulerKorteweg}
Let $\Omega \subset \mathbbm{R}^d$ be a bounded domain with $\C^{1,1}$-boundary. 
Then for all initial data $(\rho_0,m_0) \in L^1(\Omega)\times L^1(\Omega;\R^3)$ such that $\mathcal{E}(\rho_0,m_0)< \infty$,  the system \eqref{au} admits an energy-variational solution as in Definition \ref{sol eu kor}, with $\E(\rho_0,m_0)=E(0+)$.
\end{theorem}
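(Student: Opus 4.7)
The plan is to cast the Euler--Korteweg system~\eqref{au} in the abstract framework of Section~\ref{sec:abstract} and invoke Theorem~\ref{main theorem1}, reducing the problem to a verification of the hypotheses~\ref{space as}--\ref{ass:K.conv} for a judicious choice of data. I take the state $U=(\rho,m)$, the state space $\mathbbm V^{*}=(H^{1}(\Omega)\cap L^{\gamma}(\Omega))\times L^{2\gamma/(\gamma-1)}(\Omega;\R^{d})$ with $\mathbbm V$ its predual, and the test space $\mathbbm Y=\mathcal{C}^{1}(\overline\Omega)\times\mathcal V$ with $\mathcal V$ as in~\eqref{eq:space.testfct.mom} (taken with slightly higher regularity, e.g. $\mathcal C^{3}(\overline\Omega;\R^{d})$, if required by $\tilde{\mathcal K}$ below; the $\mathcal C^{2}$ test functions in Definition~\ref{sol eu kor} are then recovered by density). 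The dense compact embedding $\mathbbm Y\hookrightarrow\mathbbm V$ follows from Rellich--Kondrachov, giving~\ref{space as}. The dissipation is $\Psi\equiv 0$ since~\eqref{au} is conservative, and $A$ is read off from~\eqref{eq:envar.ek}. The Hamiltonian identity required by~\ref{ass energy bal}, namely $\langle A(\zeta),\Phi\rangle=0$ whenever $\zeta\in\partial\E^{*}(\Phi)$, is exactly the formal energy conservation and follows by direct computation along smooth states. Assumption~\ref{energy as} reduces to pointwise convexity of $\eta$ on $\R\times\R^{d}$, weak* lower semicontinuity of $\int\tfrac12|\nabla\rho|^{2}\dd x$ on $H^{1}(\Omega)$, and linear coercivity of $\E$ over $\mathbbm V^{*}$ using the Hölder interpolation $\|m\|_{L^{2\gamma/(\gamma-1)}}\leq\|m/\sqrt\rho\|_{L^{2}}\|\sqrt\rho\|_{L^{2\gamma}}$ together with Young's inequality.

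The heart of the matter is~\ref{ass:K.lsc}. I would split the integrand $-\langle A(U),\Phi\rangle+\mathcal K(\Phi)\E(U)$ into three blocks and treat each separately. The kinetic block rewrites as the perspective function $\tfrac{m\otimes m}{\rho}:\bigl(\nabla\varphi+\tfrac{\mathcal K(\Phi)}{2}\mathbbm I_{d}\bigr)$, which is pointwise convex and nonnegative in $(\rho,m)$ precisely when the bracket is positive semidefinite---ensured by the first summand of~\eqref{eq:K.lsc.ek}. The internal block $\rho^{\gamma}\bigl(\diiv\varphi+\tfrac{\mathcal K(\Phi)}{\gamma-1}\bigr)$ is convex in $\rho$ whenever $\mathcal K(\Phi)/(\gamma-1)+\diiv\varphi\geq 0$, which is guaranteed by $\max\{\gamma-1,1\}\|(\diiv\varphi)_{-}\|_{L^{\infty}}\geq(\gamma-1)\|(\diiv\varphi)_{-}\|_{L^{\infty}}$. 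The quadratic-in-$\nabla\rho$ portion of the Korteweg block factorizes as $(\nabla\rho\otimes\nabla\rho):\bigl(\nabla\varphi+\tfrac12\diiv\varphi\,\mathbbm I_{d}+\tfrac{\mathcal K(\Phi)}{2}\mathbbm I_{d}\bigr)$, again positive semidefinite under~\eqref{eq:K.lsc.ek} (now using $\max\{\gamma-1,1\}\geq 1$) and hence convex and weakly lsc on $L^{2}(\Omega;\R^{d})$. The remaining bilinear Korteweg term $\rho\nabla\rho\cdot\nabla\diiv\varphi$ is actually weakly continuous at each fixed $t$ under the pointwise-in-$t$ weak* convergence produced by the abstract scheme: weak convergence in $\mathbbm V^{*}$ gives weak convergence of $\rho$ in $H^{1}(\Omega)$, hence strong $L^{p}$-convergence for $p<2^{*}$ by Rellich, and together with the weak $L^{2}$-convergence of $\nabla\rho$ it yields $\rho\nabla\rho\rightharpoonup\rho\nabla\rho$ weakly in $L^{1}(\Omega)$, so testing against $\nabla\diiv\varphi\in\mathcal C(\overline\Omega)$ commutes with the limit.

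For the discrete-level assumption~\ref{ass:K.conv} I would take $\tilde{\mathcal K}(\Phi)=\mathcal K(\Phi)+c_{0}\bigl(\|\varphi\|_{\mathcal C^{2}}+\|\psi\|_{\mathcal C^{1}}+\|\Delta\diiv\varphi\|_{L^{\infty}}\bigr)$ with a sufficiently large universal $c_{0}>0$. This is convex, continuous, maps bounded sets to bounded sets, dominates $\mathcal K$, and, crucially, supplies enough convexity through the term $\tilde{\mathcal K}(\Phi)\E(U)$ to absorb the bilinear Korteweg contribution---which is \emph{not} pointwise convex in $\rho$ and cannot be handled by compactness in the discrete setting. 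More precisely, the second variation of $\int\rho\nabla\rho\cdot\nabla\diiv\varphi\dd x$ in direction $\sigma$ equals $-\int\sigma^{2}\Delta\diiv\varphi\dd x$ plus boundary corrections, and it is controlled by the second variation $\tilde{\mathcal K}(\Phi)\int|\nabla\sigma|^{2}\dd x$ of $\tilde{\mathcal K}(\Phi)\cdot\tfrac12\|\nabla\rho\|_{L^{2}}^{2}$ through a Young inequality once $c_{0}$ is large enough. The lower bound~\eqref{eq:ass.K.lowerbound} follows since each block is bounded below by $-c(\Phi)(\E(U)+1)$ with $c(\Phi)$ locally uniform in $\|\Phi\|_{\mathbbm Y}$. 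With~\ref{space as}--\ref{ass:K.conv} verified, Theorem~\ref{main theorem1} yields an energy-variational solution of the abstract problem with $E(0+)=\E(\rho_{0},m_{0})$, which upon unpacking is exactly Definition~\ref{sol eu kor}. I expect the main obstacle to be the dichotomy between the time-continuous and discrete levels for the bilinear Korteweg term: at the continuous level it is handled by strong $L^{p}$-compactness of $\rho$ coming from the $H^{1}$-component of~\eqref{eq:energy.ek}, but at the discrete saddle-point level no such compactness is available, which is precisely why the framework requires two distinct regularity weights $\mathcal K$ and $\tilde{\mathcal K}$.
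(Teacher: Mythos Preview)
Your overall strategy is correct, and your handling of \ref{ass:K.lsc} (the kinetic, pressure, and quadratic-in-$\nabla\rho$ Korteweg blocks, plus the compactness argument for the bilinear term) matches the paper almost verbatim. There are, however, two genuine gaps.

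\medskip
\textbf{The convexity of the bilinear Korteweg term.} Your argument for \ref{ass:K.conv} does not go through. The second variation of $\int_\Omega\rho\nabla\rho\cdot\nabla\diiv\varphi\,\dd x$ in direction $\sigma$ is indeed $-\int_\Omega\sigma^{2}\Delta\diiv\varphi\,\dd x$ up to boundary terms, but this cannot be dominated by $\tilde{\mathcal K}(\Phi)\int_\Omega|\nabla\sigma|^{2}\,\dd x$ via Young's inequality: you need $\int_\Omega\sigma^{2}\,\dd x\le C\int_\Omega|\nabla\sigma|^{2}\,\dd x$, which is a Poincar\'e inequality and fails for general $\sigma\in H^{1}(\Omega)$ (take $\sigma$ constant). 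No amount of enlarging $c_{0}$ or adding $\|\Delta\diiv\varphi\|_{L^{\infty}}$ to $\tilde{\mathcal K}$ fixes this. The paper resolves this by first observing that the total mass $\int_\Omega\rho\,\dd x$ is conserved along any energy-variational solution, so one may take $h:=\rho-\bar\rho$ as the state variable with $\bar\rho$ fixed and $h$ mean-free. The state space becomes $(H^{1}_{(0)}(\Omega)\cap L^{\gamma}(\Omega))\times L^{2\gamma/(\gamma-1)}(\Omega;\R^{d})$, and on this space the Poincar\'e inequality~\eqref{eq:poincare} yields $\big|\int_\Omega h\nabla h\cdot\nabla\diiv\varphi\,\dd x\big|\le c_{P}\|\nabla\diiv\varphi\|_{L^{\infty}}\|\nabla h\|_{L^{2}}^{2}$, so the correct auxiliary weight is $\tilde{\mathcal K}(\varphi)=\mathcal K(\varphi)+2c_{P}\|\nabla\diiv\varphi\|_{L^{\infty}}+2\|(\diiv\varphi)_{-}\|_{L^{\infty}}$. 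The leftover linear term $\bar\rho\int_\Omega\nabla h\cdot\nabla\diiv\varphi\,\dd x$ is then harmless for convexity and only contributes to the lower bound~\eqref{eq:ass.K.lowerbound}.

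\medskip
\textbf{The energy-balance assumption \ref{ass energy bal}.} Your ``direct computation along smooth states'' is too quick. Given $(\psi,\varphi)\in\Y$, one needs an explicit element $(h,m)\in\partial\ol\E^{*}(\psi,\varphi)$, and because $\dom\ol\E$ contains the constraint $\rho=h+\bar\rho\ge0$, the subdifferential is governed by an elliptic obstacle problem: $m=\rho\varphi$, and $\rho$ solves $-\Delta\rho+\tfrac{\gamma}{\gamma-1}\rho^{\gamma-1}+\lambda=\psi+\tfrac{|\varphi|^{2}}{2}$ with Neumann data and a Lagrange multiplier $\lambda\in\partial\iota_{[0,\infty)}(\rho)$. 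The paper establishes $W^{2,p}$ regularity for this problem (via Yosida approximation) and then verifies $\langle A(h,m),(\psi,\varphi)\rangle=0$ by a careful integration by parts, using that $\lambda\diiv(\rho\varphi)=0$ a.e.\ because $\lambda$ vanishes where $\rho>0$ and $\nabla(\rho\varphi)$ vanishes a.e.\ on $\{\rho=0\}$. This step is not routine and needs to be spelled out.
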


In order to obtain solutions by using the abstract framework introduced Section~\ref{sec:abstract},
we encode the mean value of the density $\rho$ in the functional framework.
Observe that this mean value is invariant during the evolution.
Indeed, setting $s=0$, $\phi=0$ and $\psi\equiv\alpha$ for some constant $\alpha\in\R$, 
we obtain
\[
E(t)-\alpha\int_\Omega\rho(t,x)\dd{x}\leq E(0)-\alpha\int_\Omega\rho_0(x)\dd{x}
\]
for a.a.~$t\in[0,T]$.
As $\alpha\in\R$ is arbitrary, 
this implies that $\int_\Omega\rho\dd{x}$ is constant in time.

The constraint to mean-free functions will be used for finding a  regularity weight $\tilde{\mathcal K}$
suitable to satisfy assumption~\ref{ass:K.conv}.
In order to keep the linear structure of the spaces,
for a given initial value $\rho_0$, we set 
\[
\overline\rho
:=\frac{1}{|\Omega|}\int_\Omega\rho_0(x)\dd{x},
\qquad
h(t,x):=\rho(t,x)-\overline\rho,
\]
and we show existence of solutions in the form $(\rho,m)=(h+\overline\rho,m)$,
where $h$ has vanishing mean value.

We thus consider state variables $(h,m)$ and test functions $(\psi,\varphi)$ in the spaces
\begin{equation}\label{spaces}
\begin{split}
 \mathbbm{V}^* =\left(H_{(0)}^1(\Omega) \cap L^\gamma(\Omega) \right)\times L^\frac{2\gamma}{\gamma -1}(\Omega;\R^d), \qquad
    \mathbbm{Y}=\mathcal{C}^1_{(0)}(\overline{\Omega})\times \mathcal{V}, 
    \end{split}
\end{equation} 
with $\mathcal V$ from~\eqref{eq:space.testfct.mom}.
Here $H_{(0)}^1(\Omega)$ and $\mathcal{C}^1_{(0)}(\overline{\Omega})$ 
denote the subspaces of $H^1(\Omega)$ and $\C^1(\overline{\Omega})$ 
of elements with vanishing mean.
Observe that the state space $\V^*$ is the dual space of the reflexive and separable space $\V=\big((H_{(0)}^{1}(\Omega))^* + L^{\frac{\gamma}{\gamma-1}}(\Omega)\big)\times L^{\frac{2\gamma}{\gamma+1}}(\Omega;\R^d)$. 

The energy functional $\ol{\E}\colon\V^*\to[0,\infty]$ is given by
\begin{equation}
\label{eq:energy.ek.hm}
\ol\E(h,m):=\E(h+\ol\rho,m)=\int_\Omega \eta(h(x)+\ol\rho,m(x))+\frac{|\nabla h(x)|^2}{2}\dd{x}
\end{equation}
with $\E$ from~\eqref{eq:energy.ek}.
In this system the dissipation potential vanishes, that is, $\Psi \equiv 0$,
and the 
operator $A\colon\dom \ol\E\to\Y^*$ is given by
\begin{equation}
\label{A}
\begin{aligned}
\langle A(h,m),(\psi, \varphi)\rangle
&=\int_\Omega -m \cdot \nabla \psi -\left( \frac{m \otimes m}{h+\ol\rho}+(h+\ol\rho)^\gamma \mathbbm{I}_d\right):\nabla \varphi  \dd{x}   \\
 &\quad+\int_\Omega-(h+\ol\rho) \nabla h \cdot\nabla( \diiv \varphi)-\frac{1}{2}|\nabla h|^2 \; \diiv \varphi - \nabla h \otimes \nabla h :\nabla \varphi \dd{x}.
\end{aligned}
\end{equation}
Additional to the regularity weight~\eqref{eq:K.lsc.ek},
which yields sufficient lower semicontinuity properties in the sense of assumption~\ref{ass:K.lsc}, 
we need a larger regularity weight 
$\tilde{\mathcal K}:\mathcal V\rightarrow \mathbbm{R}$
given by
\begin{equation}
    \label{eq:K.conv.ek}
\tilde{\mathcal{K}}(\varphi)
=2 \|(\nabla \varphi)_{\mathrm{sym},-}\|_{L^\infty(\Omega)}+\max\{(\gamma-1,3)\}\|(\diiv\varphi)_{-}\|_{L^\infty(\Omega)} + 2c_P\|\nabla\diiv\varphi\|_{L^\infty(\Omega)},
\end{equation}
in order to satisfy the convexity assumption~\ref{ass:K.conv}.
Here, $c_P>0$ denotes the smallest constant such that the Poincar\'e inequality
\begin{equation}
    \label{eq:poincare}
    \forall h\in H^1_{(0)}(\Omega):\quad
    \int_\Omega|h|^2\dd{x}\leq c_P^2\int_\Omega|\nabla h|^2\dd{x}
\end{equation}
is satisfied.

\subsection{Existence of energy-variational solutions}

To prove Theorem~\ref{thm:existence.EulerKorteweg}, we show that 
the assumptions~\ref{space as}--\ref{ass:K.conv} are satisfied,
so that we can apply the general result from Theorem~\ref{main theorem1}.
We first collect properties of the energy functional.

\begin{lemma}[Properties of $\E$]\label{E coerc}
For $\ol\rho\geq 0$, let the functional $\ol\E\colon\V^*\to[0,\infty]$ be defined by~\eqref{eq:energy.ek.hm}.
is convex and weakly* lower semicontinuous
and there exists $c>0$ such that
\begin{equation}
    \label{eq:Egrowth.ek}
    \E(\rho,m):=\ol\E(\rho-\ol\rho,m) \geq c \Big(\|m\|_{L^{\frac{2\gamma}{\gamma+1}}(\Omega)}^{\frac{2\gamma}{\gamma+1}} + \|\rho\|^{\gamma}_{L^\gamma(\Omega)}+\|\nabla \rho\|_{L^2(\Omega)}^2\Big) 
\end{equation}
for all $(\rho,m)$ such that $(h,m)=(\rho-\ol\rho,m)\in\V^*$.
In particular, $\ol\E$ is coercive in the sense of~\eqref{eq:coercive}, has superlinear growth in the sense of~\eqref{eq:superlinear}
and satisfies
$\dom \partial\ol\E^*=\dom \ol\E^*=\V^{**}=\V$.
\end{lemma}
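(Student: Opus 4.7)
The plan is to derive the pointwise growth estimate first, as all three ``in particular'' claims follow from it together with Proposition~\ref{prop:Estar} and standard convex analysis. Convexity of $\ol\E$ reduces to the observation that $(\rho,m)\mapsto |m|^2/(2\rho)$ is the perspective of the convex quadratic $m\mapsto |m|^2/2$, hence jointly convex on $(0,\infty)\times\R^d$, with the natural lower-semicontinuous extension vanishing at the origin and equal to $+\infty$ elsewhere; adding the convex one-variable term $\rho\mapsto \rho^\gamma/(\gamma-1)$ and the Dirichlet term $h\mapsto |\nabla h|^2/2$ preserves this property.

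For the weak* lower semicontinuity, I would note that $\V^*$ is reflexive, since each of its factors is, so that weak* and weak convergence on $\V^*$ coincide. Fatou's lemma applied to the non-negative, lower-semicontinuous integrand $\eta$ gives strong lower semicontinuity of the integral functional $(h,m)\mapsto \int_\Omega \eta(h+\ol\rho,m)\de x$, which upgrades to weak lower semicontinuity by convexity. The Dirichlet term is treated identically, and the two combine to give weak* lower semicontinuity of $\ol\E$ on $\V^*$.

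The technical core of the lemma is the pointwise estimate~\eqref{eq:Egrowth.ek}. Two of the three contributions on the right-hand side are immediate from the definition of $\ol\E$, namely the bounds on $\|\rho\|_{L^\gamma}^\gamma$ and $\|\nabla\rho\|_{L^2}^2$. The only actual computation is the control of $\|m\|_{L^{2\gamma/(\gamma+1)}}^{2\gamma/(\gamma+1)}$, for which I would factor
\[
|m|^{2\gamma/(\gamma+1)} = \Big(\frac{|m|^2}{\rho}\Big)^{\gamma/(\gamma+1)}\rho^{\gamma/(\gamma+1)}
\]
and apply Hölder's inequality with conjugate exponents $(\gamma+1)/\gamma$ and $\gamma+1$, so that the integral of the left-hand side is dominated by the product of $(\int |m|^2/\rho\de x)^{\gamma/(\gamma+1)}$ and $(\int \rho^\gamma\de x)^{1/(\gamma+1)}$. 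Young's inequality then yields the desired linear bound in $\ol\E$.

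From the pointwise inequality, both~\eqref{eq:coercive} and~\eqref{eq:superlinear} follow, since each of the three quantities on the right-hand side controls the corresponding component of $(h,m)$ in $\V^*$ with a growth exponent strictly greater than one (namely $\gamma$, $2$, and $2\gamma/(\gamma+1)$). Finally, once superlinear growth is known, Proposition~\ref{prop:Estar} applied to $\ol\E$ directly yields $\dom\partial\ol\E^*=\dom\ol\E^*=\V^{**}$, and $\V^{**}=\V$ by reflexivity. The main obstacle, to the extent there is any, is the exponent bookkeeping in the Hölder estimate; everything else is essentially routine once convexity and lower semicontinuity are in place.
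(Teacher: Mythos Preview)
Your proposal is correct and follows essentially the same route as the paper. The only cosmetic difference is in the momentum estimate: the paper applies Young's inequality \emph{pointwise} to $\tfrac{|m|^2}{2\rho}+\tfrac{\rho^\gamma}{2(\gamma-1)}$ and then integrates, whereas you factor $|m|^{2\gamma/(\gamma+1)}$, apply H\"older on the integral, and finish with Young on the resulting numbers; the exponent bookkeeping is identical and both arguments yield the same bound.
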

\begin{proof}
    The functionals $\E$ and $\ol\E$ are both convex and weakly lower semicontinuous 
    as integral functionals with nonnegative convex integrand. 
    Since $\V$ is reflexive, this also implies weak* lower semicontinuity.
    To show~\eqref{eq:Egrowth.ek}, let $q=\frac{\gamma+1}{\gamma}$ with $q'=\gamma+1$.
    If $\rho=h+\ol\rho>0$, then Young's inequality implies 
\begin{align*}
     \E(\rho,m) &- \frac{\|\rho\|^{\gamma}_{L^\gamma(\Omega)}}{2(\gamma-1)}-\frac{1}{2}\|\nabla \rho\|_{L^2(\Omega)}^2 
     = \int_\Omega \frac{|m|^2}{2\rho}+\frac{\rho^\gamma}{2(\gamma-1)} \dd{x} = \int_\Omega \Big(\frac{|m|^2}{2\rho}\Big)^{\frac{\gamma}{\gamma+1}q}+\Big(\frac{\rho}{(2(\gamma-1))^{\frac{1}{\gamma}}}\Big)^{\frac{\gamma}{\gamma+1}q'} \dd{x} \\
    &\geq c(\gamma)\int_\Omega \Big(\frac{|m|^2}{2\rho}\,\frac{\rho}{(2(\gamma-1))^{\frac{1}{\gamma}}}\Big)^{\frac{\gamma}{\gamma+1}}\dd{x} =c(\gamma) \int_\Omega  |m|^{\frac{2\gamma}{\gamma+1}}\dd{x}=c(\gamma)\|m\|_{L^{\frac{2\gamma}{\gamma+1}}(\Omega)}^{\frac{2\gamma}{\gamma+1}}
 \end{align*}
 for some constant $c(\gamma)>0$.
This shows~\eqref{eq:Egrowth.ek} for $\rho>0$. 
The other cases can be treated similarly or are trivial.
In particular, $\E$ satisfies~\eqref{eq:coercive} and~\eqref{eq:superlinear} and, in accordance with Proposition \ref{prop:Estar}, 
we conclude $\dom \ol\E^*=\V^{**}=\V$ by reflexivity of $\V$.
\end{proof}

In the next lemma, we consider weak solutions to the problem 
\[
\begin{aligned}
    \Delta \rho + \frac{\gamma}{\gamma-1}|\rho|^{\gamma - 2}\rho+\lambda&=g 
    && \text{in }\Omega,
    \\
    n \cdot \nabla \rho &=0 
    &&\text{on }\partial \Omega,
\end{aligned}
\]
with the constraint $\lambda\in\partial\mathcal G(\rho)$, where 
\begin{equation}
    \label{eq:indicator.fctl}
    \mathcal G\colon H^1(\Omega)\cap L^\gamma(\Omega)\to[0,\infty],
    \qquad
    \mathcal G(\rho)=\int_\Omega\iota_{[0,\infty)}\dd{x}.
\end{equation}
This is a step towards identifying elements of the subdifferential of the energy $\ol\E$, 
which is necessary for checking the validity of the assumption \ref{ass energy bal}.

\begin{lemma}\label{reg rho}
Let $\Omega\subset\R^d$ be a bounded domain with $\C^{1,1}$-boundary,
and let
$g \in C(\overline{\Omega})$. Then there exists a unique $\rho \in H^1(\Omega)\cap L^\gamma(\Omega), \rho \geq 0$ such that  
    \begin{equation}
        \label{eq:reg.rho.pde}
        \int_\Omega 
        \nabla \rho\cdot \nabla \varphi +\Big( \frac{\gamma}{\gamma-1}\rho^{\gamma - 1}+\lambda \Big) \varphi \dd{x} 
        =\int_\Omega g \varphi \dd{x} 
       \end{equation}
    for all $\varphi \in H^1(\Omega)\cap L^\gamma(\Omega)$,
    where $\lambda\in\partial\mathcal G(\rho)$ with $\mathcal G$ from~\eqref{eq:indicator.fctl}
    Moreover, 
    $\lambda\in L^p(\Omega)$ and $\rho\in W^{2,p}$ for any  $1\leq p <\infty$,
    and $\lambda(x) \in \partial \iota_{[0,\infty)}(\rho(x))$ for a.a.~$x\in\Omega$.
\end{lemma}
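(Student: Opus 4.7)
My plan is to realize $\rho$ as the unique minimizer of a convex variational problem, derive the weak equation~\eqref{eq:reg.rho.pde} from first-order optimality, and then establish the $W^{2,p}$-regularity through a penalty approximation combined with classical elliptic regularity.

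First, I would introduce the functional
\[
\mathcal{J}(\rho):=\int_\Omega \frac{|\nabla \rho|^2}{2}+\frac{\rho_+^\gamma}{\gamma-1}-g\rho\dd{x}+\mathcal{G}(\rho)
\]
on the Banach space $H^1(\Omega)\cap L^\gamma(\Omega)$, with $\mathcal{G}$ as in~\eqref{eq:indicator.fctl}. This functional is proper, strictly convex thanks to the strict convexity of $s\mapsto s_+^\gamma$ on $[0,\infty)$, and weakly lower semicontinuous. The $L^\gamma$-term controls the $L^1$-norm and hence the mean value of $\rho$, so combined with the gradient contribution and the Poincar\'e--Wirtinger inequality I obtain coercivity in $H^1(\Omega)\cap L^\gamma(\Omega)$. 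The direct method of the calculus of variations then provides a unique minimizer $\rho\geq 0$.

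Next, the smooth part of $\mathcal{J}$ has G\^ateaux derivative $-\Delta\rho+\frac{\gamma}{\gamma-1}\rho^{\gamma-1}-g$ in the distributional sense, and convex subdifferential calculus produces an element $\lambda\in\partial\mathcal{G}(\rho)$, initially in a suitable dual space, such that the weak form~\eqref{eq:reg.rho.pde} holds. Once $\lambda\in L^1(\Omega)$ is established, the characterization of $\partial\mathcal{G}(\rho)$ translates directly into the pointwise conditions $\lambda\leq 0$ and $\lambda\rho=0$ almost everywhere, which together encode $\lambda(x)\in\partial\iota_{[0,\infty)}(\rho(x))$ a.e.

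The hard part will be the regularity claim, for which I would use a penalty approximation. Replacing $\mathcal{G}$ by $\mathcal{G}_\varepsilon(\rho):=\frac{1}{2\varepsilon}\int_\Omega (\rho_-)^2\dd{x}$ yields, for each $\varepsilon>0$, a unique minimizer $\rho_\varepsilon$ solving the semilinear Neumann problem
\[
-\Delta\rho_\varepsilon+\tfrac{\gamma}{\gamma-1}(\rho_\varepsilon)_+^{\gamma-1}+\lambda_\varepsilon=g\quad\text{in }\Omega,\qquad \partial_n\rho_\varepsilon=0\quad\text{on }\partial\Omega,
\]
with $\lambda_\varepsilon:=-\tfrac{1}{\varepsilon}(\rho_\varepsilon)_-$. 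The monotonicity of $s\mapsto \frac{\gamma}{\gamma-1}(s)_+^{\gamma-1}-\frac{1}{\varepsilon}(s)_-$ and the boundedness of $g\in\C(\overline{\Omega})$ allow a standard Stampacchia truncation argument to yield a uniform $L^\infty$-bound on $\rho_\varepsilon$. The key obstruction is then to obtain a uniform $L^\infty$-bound on $\lambda_\varepsilon$, since $1/\varepsilon$ blows up. I would do this via a Lewy--Stampacchia-type estimate: testing the equation on the contact set $\{\rho_\varepsilon<0\}$ and exploiting the sign of $\Delta\rho_\varepsilon$ there gives $\|\lambda_\varepsilon\|_{L^\infty(\Omega)}\leq \|g\|_{L^\infty(\Omega)}$. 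With $(\rho_\varepsilon)_+^{\gamma-1}$ and $\lambda_\varepsilon$ uniformly bounded in $L^\infty(\Omega)$, the $W^{2,p}$-regularity theory for the Neumann Laplacian on $\C^{1,1}$-domains delivers uniform bounds $\|\rho_\varepsilon\|_{W^{2,p}(\Omega)}\leq C(p)$ for every $1\leq p<\infty$. Passing to the weak limit, identifying the limit with $\rho$ by uniqueness, and extracting a weak limit of $\lambda_\varepsilon$ in $L^p(\Omega)$ completes the proof.
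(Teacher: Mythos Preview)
Your proposal is correct and follows the same overall architecture as the paper's proof: variational existence of the minimizer, a quadratic penalty (equivalently, Yosida approximation) of the constraint, uniform bounds on the penalized multiplier, passage to the limit, and finally $W^{2,p}$ elliptic regularity for the Neumann Laplacian on $\C^{1,1}$ domains. The paper also constructs $\rho$ as the unique minimizer of the functional $\mathcal{F}+\mathcal{G}$, replaces $\mathcal{G}$ by its Moreau--Yosida regularization $f_\varepsilon$, and passes to the limit.

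The one genuinely different step is how the uniform bound on $\lambda_\varepsilon$ is obtained. The paper tests the penalized equation with $f'_\varepsilon(\rho_\varepsilon)|f'_\varepsilon(\rho_\varepsilon)|^{p-2}$ and uses the sign of $f''_\varepsilon$ and of $\rho_\varepsilon f'_\varepsilon(\rho_\varepsilon)$ to obtain $\|f'_\varepsilon(\rho_\varepsilon)\|_{L^p}\le\|g\|_{L^p}$ for every finite $p$; it then separately tests with $|\rho_\varepsilon|^{p-\gamma}\rho_\varepsilon$ to get $L^p$ bounds on $\rho_\varepsilon$. You instead go for an $L^\infty$ bound via a Lewy--Stampacchia/maximum-principle argument. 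Your phrasing ``exploiting the sign of $\Delta\rho_\varepsilon$ on the contact set'' is informal, but it can be made rigorous by testing with $-(\rho_\varepsilon+\varepsilon\|g\|_{L^\infty})_-$, which yields $(\rho_\varepsilon)_-\le\varepsilon\|g\|_{L^\infty}$ and hence $\|\lambda_\varepsilon\|_{L^\infty}\le\|g\|_{L^\infty}$. Your route gives a slightly sharper bound and avoids the separate $L^p$ estimates on $\rho_\varepsilon$; the paper's route keeps everything at the level of energy-type test functions and never needs to invoke an $L^\infty$ bound on $\rho_\varepsilon$. Both lead to the same conclusion, and the pointwise identification $\lambda(x)\in\partial\iota_{[0,\infty)}(\rho(x))$ follows in either case once $\lambda$ is known to lie in $L^1$ and to satisfy the abstract subdifferential inequality.
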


\begin{proof}
The weak formulation~\eqref{eq:reg.rho.pde} can be regarded as the Euler--Lagrange equations
for the functional $\mathcal D=\mathcal F+\mathcal G$ with
\[
\mathcal F(\rho)=\int_\Omega|\nabla\rho|^2+\frac{1}{\gamma-1}\rho^\gamma-g\rho\dd{x}.
\]
Since $\mathcal D$ is strictly convex, proper and lower semicontinuous, 
it has a unique minimizer $\rho\in H^1(\Omega)\cap L^\gamma(\Omega)$,
which yields the existence of a unique weak solution to~\eqref{eq:reg.rho.pde}.

To show the increased integrability,
we use the Yosida approximation for $\iota_{[0,\infty)}(\rho)$, defined as 
\begin{align*}
    f_\epsilon(\rho)=\inf_{\eta \in \R }\{\frac{1}{2\epsilon}|\eta-\rho|^2+\iota_{[0,\infty)}(\rho)\}=\inf_{\eta \in \R,\eta \geq 0 }\{\frac{1}{2\epsilon}|\eta-\rho|^2\}=\begin{cases}
       0\quad &\text{if } \rho \geq 0, \\
       \frac{1}{2\epsilon}|\rho|^2 \quad  &\text{if } \rho < 0.
    \end{cases}
\end{align*}
We first consider a regularized version of the problem, given by the weak formulation
\begin{align}\label{reg pde}
  \int_\Omega \nabla \rho_\epsilon\cdot \nabla \varphi +\Big( \frac{\gamma}{\gamma-1}|\rho_\epsilon|^{\gamma - 2}\rho_\epsilon+f'_\epsilon(\rho_\epsilon) \Big) \varphi \dd{x} &=\int_\Omega g \varphi \dd{x}
\end{align}
for all $\varphi\in H^1(\Omega)\cap L^{\gamma}(\Omega)$.
 From \cite[Theorem 4.7]{troltzsch}  it follows that there exists $\rho_\epsilon \in H^1(\Omega)\cap L^\infty(\Omega)$ that satisfies \eqref{reg pde}.
To obtain \textit{a priori} estimates on $f'_\epsilon(\rho_\epsilon)$, we  test the equation \eqref{reg pde} with $f'_\epsilon|f'_\epsilon|^{p-2}$. Since
\[
f'_\epsilon(\rho(x)) = 
\begin{cases}
0, & \text{if } \rho(x) \ge 0, \\[2mm]
\frac{\rho(x)}{\epsilon}, & \text{if } \rho(x) < 0,
\end{cases}
\]
we see that $|f'_\epsilon(\rho_\epsilon)|^{p-2}f'_\epsilon(\rho_\epsilon)$ is a viable test function. After a short calculation, we obtain 
\[
\begin{split}
\int_\Omega 
    \Big[(p-1)|f'_\epsilon(\rho_\epsilon)|^{p-2}f^{''}_\epsilon(\rho_\epsilon) \Big] |\nabla \rho_\epsilon|^2 
    + \Big(\frac{\gamma}{\gamma+1}|\rho_\epsilon|^{\gamma-2}\rho_\epsilon 
    &+ f'_\epsilon(\rho_\epsilon)
\Big)
|f'_\epsilon(\rho_\epsilon)|^{p-2} f'_\epsilon(\rho_\epsilon) \dd{x}
\\
&=\int_\Omega 
g \, |f'_\epsilon(\rho_\epsilon)|^{p-2} f'_\epsilon(\rho_\epsilon)
\dd{x}.
\end{split}
\]
Due to $f''_\epsilon(\rho_\epsilon)\geq 0$ 
and $f'_\epsilon(\rho_\epsilon)\rho_\epsilon\geq 0$,
all terms on the left-hand side are nonnegative. 
Omitting the first and the second term, we obtain
    \begin{align*}
\int_\Omega |f'_\epsilon(\rho_\epsilon)|^{p} \dd{x}
\leq \int_\Omega 
g \, |f'_\epsilon(\rho_\epsilon)|^{p-2} f'_\epsilon(\rho_\epsilon) 
\dd{x} \leq \frac{1}{p}\|g\|_{L^p(\Omega)}^p+\frac{p-1}{p}\|f'_\epsilon(\rho_\epsilon)\|_{L^{p}(\Omega)}^{p}
    \end{align*}
    by H\"older's and Young's inequality,
which implies the uniform bound 
\begin{align}\label{a priofi f'}
    \|f'_\epsilon(\rho_\epsilon)\|_{L^{p}(\Omega)}^{p} &\leq \|g\|_{L^p(\Omega)}^p.
\end{align}
To derive uniform estimates of $\rho_\epsilon$ in $L^p(\Omega)$,
first consider $p\geq\gamma$. 
We test \eqref{reg pde} with $|\rho_\epsilon|^{p-\gamma}\rho_\epsilon$,
which is an element of $H^1(\Omega)\cap L^\infty(\Omega)$. 
This yields 
\[
    \int_\Omega  (p-\gamma+1)|\rho_\epsilon|^{p-\gamma}|\nabla \rho_\epsilon|^2
    +\frac{\gamma}{\gamma-1}|\rho_\epsilon|^{p}
    + |\rho_\epsilon|^{p-\gamma} f'_\epsilon(\rho_\epsilon)\rho_\epsilon \dd{x}
=
\int_\Omega 
g |\rho_\epsilon|^{p-\gamma}\rho_\epsilon \dd{x}
\leq \frac{1}{\delta p}\|g\|_{L^q}^q+ \frac{\delta}{p'}\|\rho_\epsilon\|_{L^p}^{p}.
\]
for $\delta>0$ 
and $q=\frac{p}{\gamma-1}$
by H\"older's and Young's inequalities.
Again, all terms on the left-hand side are nonnegative,
and we can omit the first and the third one.
Choosing $\delta>0$ sufficiently small reveals
\begin{equation}
\label{eq:apriori.rhoeps}
\| \rho_\varepsilon\|_{L^p(\Omega)}^p
\leq c\|g\|_{L^q(\Omega)}^q
\end{equation}
for some $c>0$.
As $\Omega$ is bounded, this gives a uniform bound for all $p\in[1,\infty)$.
To derive \textit{a priori} estimates for $\nabla \rho_\epsilon$, we 
take $p=\gamma$ in the previous identity, 
omit the second and third term and argue similarly
to obtain
\begin{equation}
\label{eq:apriori.nablarho}
\|\nabla \rho_\epsilon\|_{L^2}^2+\Big(\frac{\gamma}{\gamma - 1} -\frac{1}{\gamma}\Big)\|\rho_\epsilon\|_{L^\gamma}^\gamma\leq \frac{1}{\gamma'} \|g\|_{L^{\gamma'}}^{\gamma'} ,
\end{equation}
which also implies a uniform bound on $\|\rho_\epsilon\|_{H^1(\Omega)}$ 
by the Gagliardo--Nirenberg inequality.

By the uniform bounds~\eqref{a priofi f'}, \eqref{eq:apriori.rhoeps} and~\eqref{eq:apriori.nablarho},
we can pass to a subsequence such that $f'(\rho_\epsilon)\rightharpoonup\lambda$ in $L^p(\Omega)$ 
and $\rho_\varepsilon\rightharpoonup\rho$ in $L^p(\Omega)\cap H^1(\Omega)$
for some $\lambda\in L^p(\Omega)$ and $\rho\in L^p(\Omega)\cap H^1(\Omega)$.
The Rellich--Kondrachov theorem implies that $\rho_\varepsilon\to\rho$ in 
$L^2(\Omega)$,
which implies convergence a.e.~in $\Omega$ for a subsequence.
Moreover, by~\eqref{a priofi f'} we have
\[
\frac{1}{\epsilon^p}\int_\Omega|\rho(x)|^p \mathbbm{1}_{\{\rho_\epsilon<0\}}\dd{x}
=\|f'(\rho_\epsilon)\|_{L^p(\Omega)}\leq \|g\|_{L^p(\Omega)}^p,
\]
Letting $\epsilon \to 0$, we see that $ |\rho_\epsilon|^2 \, \mathbbm{1}_{\{\rho_\epsilon < 0\}} \to 0$ in $L^1(\Omega)$, which implies $\rho \geq 0$.
Using Vitali's convergence theorem for the nonlinear term,
we can now pass to the limit $\epsilon\to0$ in~\eqref{reg pde}
to obtain the weak formulation~\eqref{eq:reg.rho.pde}
for any $\varphi\in H^1(\Omega)\cap L^\gamma(\Omega)$.

It remains to show the pointwise characterization of $\lambda(x)\in\partial\iota_{[0,\infty)}(\rho(x))$. 
With the functional $\mathcal F$ from above, 
we can write~\eqref{reg pde} as $-D\mathcal F(\rho_\epsilon)=f'_\epsilon(\rho)$.
By the the convexity of $f_\epsilon$, we thus have
\[
\langle -D\mathcal{F}(\rho_\epsilon),\varphi-\rho_\epsilon \rangle  \leq \int_\Omega f_\epsilon(\varphi) - f_\epsilon(\rho_\epsilon) \dd{x}
\leq \int_\Omega f_\epsilon(\varphi) - f_1(\rho_\epsilon) \dd{x}
\]
for $\epsilon\leq 1$ as $f_1\leq f_\epsilon$.
Let us only consider $\varphi \geq 0$ for the moment. Then $f_\epsilon(\varphi)=0$. We further have $\langle- D\mathcal{F}(\rho_\epsilon),\varphi \rangle=\langle f_\epsilon'(\rho_\epsilon),\varphi\rangle\to\langle\lambda,\varphi\rangle$.
It remains to investigate the convergence of 
\[
    \langle D\mathcal{F}(\rho_\epsilon),\rho_\epsilon \rangle+\int_\Omega f_1(\rho_\epsilon) \dd{x}=\int_\Omega |\nabla \rho_\epsilon|^2 
+  \frac{\gamma}{\gamma -1} |\rho_\epsilon|^{\gamma} - g \rho_\epsilon  +f_1(\rho_\epsilon)\dd{x}.
\]
As a mapping of $\rho_\varepsilon$,
this defines a 
convex and lower semicontinuous functional, so that 
\begin{align*}
    \int_\Omega |\nabla \rho|^2 
+  \frac{\gamma}{\gamma -1} |\rho|^{\gamma} - g \rho +f_1(\rho) \dd{x}\leq \liminf_{\epsilon \to 0}\int_\Omega |\nabla \rho_\epsilon|^2 
+  \frac{\gamma}{\gamma -1} |\rho_\epsilon|^{\gamma} - g \rho_\epsilon  +f_1(\rho_\epsilon)\dd{x}.
\end{align*}
Since $\rho \geq 0$ a.e., we have
$f_1(\rho(x))=0=\iota_{[0,\infty)}(\rho(x))$ a.e.~in $\Omega$,
and we arrive at
\[
\langle \lambda, \varphi -\rho \rangle=\langle -D\mathcal{F}(\rho),\varphi-\rho \rangle  \leq \int_\Omega \iota_{[0,\infty)}(\varphi(x)) - \iota_{[0,\infty)}(\rho(x)) \dd{x}.
\]
Note that this inequality holds for all $\varphi \in H^1(\Omega)\cap L^\gamma(\Omega)$, since it is trivially satisfied if $\varphi\geq 0 $ does not hold. 
This variational inequality gives the pointwise identification $\lambda(x)\in \partial \iota_{[0,\infty)}(\rho(x))$.
Finally, due to $\C^{1,1}$-regularity of the boundary of $\Omega$, 
classical elliptic theory for the Neumann--Laplace problem,
see~\cite[Theorem 2.2.2.5]{grisvard1985elliptic} for example, implies $\rho\in W^{2,p}(\Omega)$.
This concludes the proof of the lemma.  
\end{proof}

With the help of Lemma~\ref{reg rho}, 
we can further study the conjugate functional $\ol\E^*$
and characterize the subdifferential $\partial\ol\E^*$.

\begin{lemma}\label{subdiff eu kort}
Let $\ol\rho\geq 0$ and 
let $\ol\E\colon\V^*\to[0,\infty]$ be defined by~\eqref{eq:energy.ek.hm}.
Then, for all $(\xi,\zeta) \in \mathcal{C}^1_{(0)}(\Omega) \times \mathcal{V}$ there exist $(h,m) \in \partial \ol\E^*(\xi,\zeta)$ and $\lambda \in L^p(\Omega)$ 
with $\lambda(x)\in\partial\iota_{[0,\infty)}(\rho(x))$ for a.a.~$x\in\Omega$,
where $\rho=h+\ol\rho$, that satisfy 
$m=\rho\zeta$ and
\begin{equation}
    \label{eq:subdiff.pde.ek}
    \begin{aligned}
          \int_\Omega \nabla \rho\cdot \nabla \chi +\Big( \frac{\gamma}{\gamma-1}\rho^{\gamma - 1}+\lambda \Big) \chi \dd{x} &=\int_\Omega \xiz \chi\dd{x}   
        \end{aligned}
\end{equation}
for all $\chi \in \V$. 
Moreover, $\rho,h\in W^{2,p}(\Omega)$ and $m\in W^{2,p}(\Omega;\R^d)$ for any $1\leq p<\infty$.
In particular, $\rho\geq 0$ and $\lambda\leq 0$ a.e.~in $\Omega$,
and $\lambda(x)=0$ if $\rho(x)>0$ for a.a.~$x\in\Omega$.
\end{lemma}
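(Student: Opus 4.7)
My plan is to use Fenchel reciprocity to reduce the identification of $\partial\ol\E^*(\xi,\zeta)$ to a minimization problem over $\V^*$, solve that problem by the direct method, and then match its Euler--Lagrange equation to the setting of Lemma~\ref{reg rho}. Since $\ol\E$ has superlinear growth by Lemma~\ref{E coerc}, Proposition~\ref{prop:Estar} gives $\dom\ol\E^*=\V$, and $(\xi,\zeta)\in\C^1_{(0)}(\ol\Omega)\times\mathcal{V}$ embeds naturally into $\V$ via $\C^1_{(0)}(\ol\Omega)\hookrightarrow(H^1_{(0)}(\Omega))^*+L^{\gamma/(\gamma-1)}(\Omega)$ and $\mathcal{V}\hookrightarrow L^{2\gamma/(\gamma-1)}(\Omega;\R^d)$. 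Thus $(h,m)\in\partial\ol\E^*(\xi,\zeta)$ is equivalent, by Fenchel reciprocity, to $(h,m)$ being a minimizer of
\[
\mathcal{J}(h',m'):=\ol\E(h',m')-\int_\Omega h'\xi+m'\cdot\zeta\dd{x}
\]
over $\V^*$. Strict convexity, coercivity, and weak$^*$ lower semicontinuity (via Lemma~\ref{E coerc}) then give existence and uniqueness of such a minimizer by the direct method.

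To identify the minimizer explicitly, I would first minimize $\eta(\rho,\cdot)-\zeta\cdot m$ pointwise. Strict convexity in $m$ together with $\eta(0,m)=+\infty$ for $m\neq 0$ forces $m=\rho\zeta$ (with $m=0$ on $\{\rho=0\}$). Substituting, $\rho=h+\ol\rho$ must minimize
\[
\tilde{\mathcal J}(\rho)=\int_\Omega \frac{|\nabla\rho|^2}{2}+\frac{\rho^\gamma}{\gamma-1}-\xiz\rho\dd{x}
\]
over $\rho\in H^1(\Omega)\cap L^\gamma(\Omega)$ with $\rho\geq 0$ and the mean constraint $\int_\Omega\rho\dd{x}=|\Omega|\ol\rho$ inherited from $h\in H^1_{(0)}$. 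Introducing a Lagrange multiplier $\mu\in\R$ for that constraint, the associated unconstrained minimization of $\tilde{\mathcal J}+\mu\int\rho\dd{x}$ over nonnegative $\rho$ is precisely the setting of Lemma~\ref{reg rho} with datum $g=\xiz-\mu\in\C(\ol\Omega)$. Existence of an admissible $\mu$ follows from a continuity-monotonicity argument: by uniqueness and comparison for the obstacle problem of Lemma~\ref{reg rho}, the map $\mu\mapsto\int_\Omega\rho_\mu\dd{x}$ is continuous and strictly decreasing, tending to $+\infty$ as $\mu\to-\infty$ and to $0$ as $\mu\to+\infty$, hence attaining $|\Omega|\ol\rho$ for a unique $\mu$.

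Lemma~\ref{reg rho} then supplies $\rho\in W^{2,p}(\Omega)$ with $\rho\geq 0$ and $\lambda\in L^p(\Omega)$ satisfying $\lambda(x)\in\partial\iota_{[0,\infty)}(\rho(x))$ for a.a.~$x\in\Omega$, together with
\[
\int_\Omega \nabla\rho\cdot\nabla\varphi+\Bigl(\tfrac{\gamma}{\gamma-1}\rho^{\gamma-1}+\lambda\Bigr)\varphi\dd{x}=\int_\Omega(\xiz-\mu)\varphi\dd{x}
\]
for all $\varphi\in H^1(\Omega)\cap L^\gamma(\Omega)$. Restricting $\varphi$ to the mean-free subspace (the test-function class paired with $h\in H^1_{(0)}$) makes the $\mu$-term vanish and yields the identity claimed in the lemma. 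Setting $h:=\rho-\ol\rho\in H^1_{(0)}\cap W^{2,p}$ and $m:=\rho\zeta$, the product rule together with $\zeta\in\mathcal{V}\subset\C^2(\ol\Omega;\R^d)$ gives $m\in W^{2,p}(\Omega;\R^d)$, while $\lambda\leq 0$ and $\lambda\equiv 0$ on $\{\rho>0\}$ read off from the pointwise characterization of $\lambda$. The main obstacle I anticipate is the Lagrange-multiplier step: strict monotonicity of $\mu\mapsto\int\rho_\mu\dd{x}$ must be extracted from the comparison structure of the obstacle problem, which ultimately rests on the strict convexity of $\tilde{\mathcal J}$; the borderline case $\ol\rho=0$ forces $\rho\equiv 0$ and has to be handled separately as a limiting case $\mu\to+\infty$.
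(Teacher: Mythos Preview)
Your proposal is correct but takes a more laborious route than the paper. The paper also uses Fenchel reciprocity $(h,m)\in\partial\ol\E^*(\xi,\zeta)\Leftrightarrow(\xi,\zeta)\in\partial\ol\E(h,m)$ and the pointwise minimization giving $m=\rho\zeta$, but then it writes the Euler--Lagrange condition for the remaining minimization in $h$ \emph{directly} on the mean-free space $H^1_{(0)}\cap L^\gamma$: this produces the weak formulation with an obstacle multiplier $\lambda$ living in the dual of $H^1_{(0)}\cap L^\gamma$. The key trick is then to \emph{extend} $\lambda$ to $H^1\cap L^\gamma$ by declaring $\langle\lambda,1\rangle=0$; the equation on mean-free test functions is equivalent to the equation on all of $H^1\cap L^\gamma$ with the right-hand side shifted by the constant $\frac{1}{|\Omega|}\int_\Omega\bigl(\tfrac{\gamma}{\gamma-1}\rho^{\gamma-1}-\tfrac{|\zeta|^2}{2}\bigr)\dd{x}$. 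This constant is exactly your Lagrange multiplier $\mu$, but it appears \emph{a posteriori} from the extension rather than having to be found by a shooting argument. Lemma~\ref{reg rho} then applies directly with this continuous right-hand side.

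The advantage of the paper's route is that it bypasses your monotonicity/continuity analysis of $\mu\mapsto\int_\Omega\rho_\mu\dd{x}$ entirely (and with it the awkward borderline case $\ol\rho=0$ you flagged): existence of the minimizer is already guaranteed by the direct method (as you also note), so there is no need to re-derive it parametrically in $\mu$. Your approach, on the other hand, is more constructive and makes the value of the multiplier explicit, but at the cost of an extra comparison argument for the obstacle problem that, while plausible, you would have to justify carefully.
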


\begin{proof}
Let $(\xi,\zeta) \in \mathcal{C}^1_{(0)}(\Omega) \times \mathcal{V}=\dom\partial\ol\E$, 
compare Lemma~\ref{E coerc}.
 Since $\ol\E$ is a proper, convex, and lower semicontinuous functional, we have
\[
(h,m) \in \partial \ol\E^*(\xi,\zeta) \Leftrightarrow (\xi,\zeta) \in \partial \ol\E(h,m),
\]
by~\cite[Proposition 2.33]{barbu} for instance.
In particular, $(h,m)\in\dom\ol\E$, 
which implies $\rho:=h+\ol\rho\geq 0$.
In virtue the definition of $\ol\E$, see~\eqref{eq:energy.ek.hm}, 
this directly yields $m=(h+\ol\rho)\zeta=\rho\zeta$,
and $(\xi,\zeta) \in \partial \ol\E(h,m)$
reduces to $0\in\partial\mathcal C(h)$, 
where ${\C}:H^1_{(0)}(\Omega)\cap L^\gamma(\Omega) \rightarrow \R\cup\{\infty\}$
is given by $\C:=\mathcal F+\mathcal G$ with
\[
{\mathcal{F}}(h)=\int_\Omega \frac{|h+\ol\rho|^\gamma}{\gamma-1}+\frac{|\nabla h|^2}{2}-\rho\Big(\xi+\frac{|\zeta|^2}{2}\Big)\dd{x},
\]
and $\mathcal G$ as in~\eqref{eq:indicator.fctl}
Since $\mathcal{F}$ is Gateaux differentiable, 
the condition $0\in\partial\C(h)$ corresponds
to 
\begin{equation}
    \label{eq:subdiff.pde.ek.1}
    \begin{aligned}
          \int_\Omega \nabla h\cdot \nabla \chi +\frac{\gamma}{\gamma-1}(h+\ol\rho)^{\gamma - 1}\chi\dd{x} +\langle\lambda,\chi\rangle &=\int_\Omega \xiz \chi\dd{x}   
        \end{aligned}
\end{equation}
for all $\chi \in \V=H^1_{(0)}(\Omega)\cap L^\gamma(\Omega)$
and for some $\lambda\in\partial\mathcal G(h)\subset \V^*$.
Then $\lambda$ can be extended to a continuous functional on $H^1(\Omega)\cap L^\gamma(\Omega)$
with $\langle\lambda,1\rangle=0$,
and~\eqref{eq:subdiff.pde.ek.1} for all $\chi\in\V$ is equivalent to
\[
    \begin{aligned}
          \int_\Omega \nabla h\cdot \nabla \chi +\frac{\gamma}{\gamma-1}(h+\ol\rho)^{\gamma - 1}\chi\dd{x} +\langle\lambda,\chi\rangle 
          =\int_\Omega \Big(\xi+\frac{|\zeta|^2}{2}+\frac{1}{|\Omega|}\int_\Omega \frac{\gamma}{\gamma-1} (h+\ol\rho)^{\gamma - 1}-\frac{|\zeta|^2}{2}\dd{x}\Big)\chi\dd{x}
    \end{aligned}
\]
for all $\chi\in H^1(\Omega)\cap L^\gamma(\Omega)$.
The term in parenthesis defines a continuous function,
so that the regularity result from Lemma~\ref{reg pde} implies 
the asserted regularity of $h=\rho-\ol\rho$ and $\lambda$,
so that~\eqref{eq:subdiff.pde.ek.1} is equivalent to~\eqref{eq:subdiff.pde.ek}.
\end{proof}

We next show that the operator $A$, as defined in~\eqref{A}, satisfies~\ref{ass energy bal}.

\begin{lemma}\label{lem:energybalance.ek}
    For every $(\psi,\varphi)\in\Y$
    there exists $(\rho,m)\in\partial\ol\E^*(\psi,\varphi)$ such that
    \[\langle A(h,m),(\psi,\varphi)\rangle=0.\]
\end{lemma}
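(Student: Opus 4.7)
The plan is to show the identity by a direct calculation based on the characterization of $\partial\ol\E^*(\psi,\varphi)$ provided by Lemma~\ref{subdiff eu kort}. Since $\Psi\equiv 0$ for the Euler--Korteweg system, the only task is to produce a preimage $(h,m)\in\partial\ol\E^*(\psi,\varphi)$ for which $\langle A(h,m),(\psi,\varphi)\rangle=0$, and this preimage is already singled out in Lemma~\ref{subdiff eu kort}: namely $m=\rho\varphi$ with $\rho=h+\ol\rho\ge0$ solving the elliptic problem \eqref{eq:subdiff.pde.ek} with Lagrange multiplier $\lambda$, and with $W^{2,p}$-regularity of $\rho$ and $m$. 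The test function $\chi:=\diiv(\rho\varphi)$ will play a central role: it is admissible in \eqref{eq:subdiff.pde.ek} because $\varphi\cdot n=0$ on $\partial\Omega$ forces $\int_\Omega\diiv(\rho\varphi)\dd{x}=0$, i.e. it lies in $\V=H^1_{(0)}(\Omega)\cap L^\gamma(\Omega)$ (using the regularity provided by Lemma~\ref{subdiff eu kort}).

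First I would substitute $m=\rho\varphi$ and $\nabla h=\nabla\rho$ into the formula \eqref{A} for $A$. Two algebraic observations simplify matters considerably: the convective term satisfies $(\varphi\otimes\varphi):\nabla\varphi=\varphi\cdot\nabla(|\varphi|^2/2)$, so the first two integrands combine to $-\rho\varphi\cdot\nabla(\psi+|\varphi|^2/2)$; and integration by parts using $\varphi\cdot n=0$ converts this into $\int_\Omega(\psi+|\varphi|^2/2)\diiv(\rho\varphi)\dd{x}$. Next I would insert the PDE \eqref{eq:subdiff.pde.ek} tested against $\chi=\diiv(\rho\varphi)$ to rewrite this last expression in terms of $\nabla\rho\cdot\nabla\diiv(\rho\varphi)$, a pressure-type contribution, and a Lagrange-multiplier term.

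The Lagrange-multiplier term vanishes: expanding $\diiv(\rho\varphi)=\rho\diiv\varphi+\varphi\cdot\nabla\rho$, the piece $\lambda\rho\diiv\varphi$ is zero because $\lambda(x)=0$ whenever $\rho(x)>0$, while the piece $\lambda\varphi\cdot\nabla\rho$ vanishes a.e.\ by the classical fact that $\nabla\rho=0$ a.e.\ on $\{\rho=0\}$ for $\rho\in W^{2,p}(\Omega)$. For the pressure term, I would use $\rho^{\gamma-1}\diiv(\rho\varphi)=\rho^\gamma\diiv\varphi+\frac{1}{\gamma}\varphi\cdot\nabla(\rho^\gamma)$ followed by integration by parts on $\int\varphi\cdot\nabla(\rho^\gamma)\dd{x}$, so that $\frac{\gamma}{\gamma-1}\int_\Omega\rho^{\gamma-1}\diiv(\rho\varphi)\dd{x}$ collapses to exactly $\int_\Omega\rho^\gamma\diiv\varphi\dd{x}$, matching the corresponding term in $\langle A(h,m),(\psi,\varphi)\rangle$.

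The final and technically heaviest step is expanding $\nabla\rho\cdot\nabla\diiv(\rho\varphi)$ pointwise as
\begin{equation*}
\diiv\varphi\,|\nabla\rho|^2+\rho\nabla\rho\cdot\nabla\diiv\varphi+(\nabla\varphi):(\nabla\rho\otimes\nabla\rho)+\varphi\cdot\nabla\bigl(\tfrac12|\nabla\rho|^2\bigr),
\end{equation*}
and integrating the last summand by parts (again using $\varphi\cdot n=0$) to produce $-\frac12\int_\Omega|\nabla\rho|^2\diiv\varphi\dd{x}$. Adding everything together reproduces precisely the capillary integrals in $\langle A(h,m),(\psi,\varphi)\rangle$ with the opposite sign, so all terms cancel and $\langle A(h,m),(\psi,\varphi)\rangle=0$. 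The main obstacle is bookkeeping: one has to make sure $\diiv(\rho\varphi)\in\V$ (mean-free and of the right integrability, which follows from the $W^{2,p}$-regularity), and that each integration by parts is legitimate, which is why the boundary condition $\varphi\cdot n=0$ encoded in $\mathcal V$ is essential throughout.
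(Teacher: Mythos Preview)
Your proof is correct and follows essentially the same route as the paper: choose the element $(h,m)$ with $m=\rho\varphi$ furnished by Lemma~\ref{subdiff eu kort}, test the elliptic identity \eqref{eq:subdiff.pde.ek} with $\chi=\diiv(\rho\varphi)\in\V$, and reduce everything to $\int_\Omega\lambda\,\diiv(\rho\varphi)\dd{x}=0$ via the pointwise characterization of $\lambda$. The only cosmetic difference is the direction in which the capillary identity is read---you expand $\nabla\rho\cdot\nabla\diiv(\rho\varphi)$ at the end, whereas the paper first collapses the three Korteweg integrands into that expression---but the underlying computation and the handling of the Lagrange multiplier are the same.
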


\begin{proof}
    Let $(\rho,m)\in\partial\ol\E^*(\psi,\varphi)$ and $\lambda$ be as in Lemma~\ref{subdiff eu kort}.
    Using $m=\rho\varphi$ with $\rho=h+\ol\rho$, we have
    \[
    \begin{aligned}
        \langle A(h,m),(\psi, \varphi)\rangle
        &=-\int_\Omega \rho\varphi \cdot \nabla \psi +  \rho\varphi\cdot\nabla\frac{|\varphi|^2}{2}+ \rho^\gamma \diiv\varphi \dd{x}
        \\
        &\qquad-\int_\Omega \nabla\rho\cdot\nabla(\rho\diiv\varphi) 
        -\frac{1}{2}|\nabla\rho|^2\diiv\varphi
        +\nabla\rho\otimes\nabla\rho:\nabla\varphi\dd{x}.
    \end{aligned}
    \]
    Since $\rho\in W^{2,p}(\Omega)$ for any $p\in(1,\infty)$, we have the identity
    \[
    -\int_\Omega\frac{1}{2}|\nabla\rho|^2\diiv\varphi\dd{x}
    =\int_\Omega\nabla\rho\cdot(\nabla^2\rho\,\varphi)\dd{x}
    =\int_\Omega\nabla\rho\cdot\nabla(\nabla\rho\cdot\varphi)\dd{x}
    -\int_\Omega\nabla\rho\cdot(\nabla\varphi\nabla\rho)\dd{x},
    \]
    which yields, together with integration by parts, that
    \[
    \begin{aligned}
        \langle A(h,m),(\psi, \varphi)\rangle
        &=\int_\Omega \diiv(\rho\varphi) \Big(\psi+\frac{|\varphi|^2}{2}\Big)- \rho^\gamma \diiv\varphi - \nabla\rho\cdot\nabla\diiv(\rho\varphi) 
        \dd{x}.
    \end{aligned}
    \]
    Since $\diiv(\rho\varphi)$ is mean-value free, we have $\diiv(\rho\varphi)\in\V$,
    and we can use the weak formulation~\eqref{eq:subdiff.pde.ek}
    to obtain
    \[
    \begin{aligned}
        \langle A(h,m),(\psi, \varphi)\rangle
        &=\int_\Omega \diiv(\rho\varphi) \Big(\frac{\gamma}{\gamma-1}\rho^{\gamma-1}+\lambda \Big) - \rho^\gamma \diiv\varphi
        \dd{x}
        \\
        &=\int_\Omega  \frac{1}{\gamma-1}\rho^{\gamma}\diiv\varphi+\rho \frac{\gamma}{\gamma-1}\rho^{\gamma-1}\nabla\rho\cdot\varphi+
        \lambda\diiv(\rho\varphi)
        \dd{x}
        =\int_\Omega \lambda\diiv(\rho\varphi)
        \dd{x}.
    \end{aligned}
    \]
    by the divergence theorem and $\varphi\cdot n=0$.
Since $\lambda \in \partial \iota_{[0,\infty)}(\rho)$ pointwise, 
on the set where $\rho(x)>0$, we have $\lambda(x)=0$. 
Since on the set where $(\rho\varphi)(x)=0$, the gradient $\nabla( \rho \varphi)=0$ vanishes a.e., by~\cite[Corollary 2.1]{chipot} for instance,
we conclude that  $\langle A(h,m),(\psi, \varphi)\rangle=0$.
\end{proof}

In the next lemmas we ensure the validity of the assumptions~\ref{ass:K.lsc} and~\ref{ass:K.conv}.
We first show that the choice of $\tilde{\mathcal K}$ as in~\eqref{eq:K.conv.ek} 
induces sufficient convexity.

\begin{lemma}\label{convex ek}
    Let $\ol\rho>0$. 
    For each $(\psi,\varphi) \in \Y$,
    the mapping 
    \[
    (h,m) \mapsto - \langle A(h,m),(\psi,\varphi) \rangle +\tilde{\mathcal{K}}(\varphi)\ol\E(h,m),
    \]
    with $A$ and $\tilde{\mathcal K}$ as in~\eqref{A} and~\eqref{eq:K.conv.ek},
    is convex and weakly* lower semicontinuous in $\V^*$.
    Moreover, if $\|(\psi,\varphi)\|_{\Y}\leq R$ for some $R>0$,
    then there exists a constant $c(R)>0$ such that
    \begin{equation}
        \label{eq:AKlowerbound.ek}
        -\langle A(h,m),(\psi,\varphi) \rangle +\tilde{\mathcal{K}}(\varphi)\ol\E(h,m)
        \geq -c\big(\ol\E(h,m)+1\big)
    \end{equation}
    for all $(h,m)\in\V^*$.
\end{lemma}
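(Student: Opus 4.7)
The main task is to establish the convexity of the mapping
$\mathcal{L}(h,m):=-\langle A(h,m),(\psi,\varphi)\rangle+\tilde{\mathcal{K}}(\varphi)\ol\E(h,m)$;
the weak$^*$ lower semicontinuity on $\V^*$ will then follow by standard arguments, since $\V$ is reflexive (a product of reflexive $L^p$ and $(H^1_{(0)})^*$ components), so that weak$^*$ and weak sequential lower semicontinuity coincide on $\V^*$, and convex strongly lower semicontinuous functionals are automatically weakly lower semicontinuous by Mazur's lemma. I split $\mathcal{L}$ into four pieces: the affine term $\int_\Omega m\cdot\nabla\psi\,\dd x$; a kinetic piece absorbing $\tilde{\mathcal{K}}(\varphi)\tfrac{|m|^2}{2\rho}$; a pressure piece absorbing $\tilde{\mathcal{K}}(\varphi)\tfrac{\rho^\gamma}{\gamma-1}$; and a gradient-density piece absorbing $\tilde{\mathcal{K}}(\varphi)\tfrac{|\nabla h|^2}{2}$, where $\rho=h+\ol\rho$.

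The first three pieces are handled pointwise. For the kinetic piece, symmetry of $m\otimes m$ gives
\[
\frac{m\otimes m}{\rho}:\nabla\varphi+\tilde{\mathcal{K}}(\varphi)\frac{|m|^2}{2\rho}=\frac{1}{\rho}\Bigl[\tfrac{\tilde{\mathcal{K}}(\varphi)}{2}\mathbbm{I}_d+(\nabla\varphi)_{\sym}\Bigr]:(m\otimes m),
\]
and the bound $\tilde{\mathcal{K}}(\varphi)\geq 2\|(\nabla\varphi)_{\sym,-}\|_{L^\infty(\Omega)}$ makes the matrix in brackets pointwise positive semidefinite, so the integrand (extended by $+\infty$ for $\rho<0$) is the perspective of a nonnegative quadratic form and hence jointly convex in $(\rho,m)$. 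For the pressure piece, $\tilde{\mathcal{K}}(\varphi)\geq(\gamma-1)\|(\diiv\varphi)_-\|_{L^\infty(\Omega)}$ makes the scalar factor $\diiv\varphi+\tilde{\mathcal{K}}(\varphi)/(\gamma-1)$ pointwise nonnegative, so that the integrand is a nonnegative multiple of the convex function $\rho\mapsto\rho^\gamma$.

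The main obstacle is the gradient-density piece, which contains the bilinear cross term $\int_\Omega(h+\ol\rho)\nabla h\cdot\nabla\diiv\varphi\,\dd x$ that is not pointwise convex in $h$. Grouped with the remaining quadratic-in-$\nabla h$ contributions, this piece reads
\[
\mathcal{L}_{\mathrm{grad}}(h)=\int_\Omega M(x):(\nabla h\otimes\nabla h)\,\dd x+\int_\Omega (h+\ol\rho)\nabla h\cdot\nabla\diiv\varphi\,\dd x,
\]
with $M=\tfrac{\tilde{\mathcal{K}}(\varphi)}{2}\mathbbm{I}_d+\tfrac{1}{2}\diiv\varphi\,\mathbbm{I}_d+(\nabla\varphi)_{\sym}$. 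As a quadratic functional of $h$, its convexity on $H^1_{(0)}(\Omega)$ is equivalent to the nonnegativity of the Hessian
\[
\mathcal{L}_{\mathrm{grad}}''(h)[\phi,\phi]=2\int_\Omega M:(\nabla\phi\otimes\nabla\phi)\,\dd x+2\int_\Omega\phi\,\nabla\phi\cdot\nabla\diiv\varphi\,\dd x
\]
for every $\phi\in H^1_{(0)}(\Omega)$. The pointwise bound on $\lambda_{\min}(M(x))$ shows that the first term is bounded below by $(\tilde{\mathcal{K}}(\varphi)-2\|(\nabla\varphi)_{\sym,-}\|_{L^\infty}-\|(\diiv\varphi)_-\|_{L^\infty})\|\nabla\phi\|_{L^2}^2$, and Cauchy--Schwarz together with the Poincar\'e inequality~\eqref{eq:poincare} (usable because $\phi$ has zero mean) yields the second term bounded in modulus by $2c_P\|\nabla\diiv\varphi\|_{L^\infty}\|\nabla\phi\|_{L^2}^2$. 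The definition of $\tilde{\mathcal{K}}(\varphi)$ --- and in particular its summand $2c_P\|\nabla\diiv\varphi\|_{L^\infty(\Omega)}$ together with the coefficient $\max\{\gamma-1,3\}\geq 1$ in front of $\|(\diiv\varphi)_-\|_{L^\infty}$ --- is tailored precisely so that the sum of these two bounds is nonnegative, yielding convexity of $\mathcal{L}_{\mathrm{grad}}$.

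Finally, for the affine lower bound~\eqref{eq:AKlowerbound.ek}, I estimate each term of $-\langle A(h,m),(\psi,\varphi)\rangle$ by $c(R)(\ol\E(h,m)+1)$ using H\"older's inequality, the coercivity~\eqref{eq:Egrowth.ek} of Lemma~\ref{E coerc}, and (for the linear-in-$m$ contribution) the continuous embedding $L^{2\gamma/(\gamma+1)}(\Omega)\hookrightarrow L^1(\Omega)$; the bilinear term involving $(h+\ol\rho)\nabla h\cdot\nabla\diiv\varphi$ is split into its linear and quadratic parts in $h$, and the quadratic part is bounded by $c\|\nabla\diiv\varphi\|_{L^\infty}\|\nabla h\|_{L^2}^2\lesssim\ol\E(h,m)$ via Poincar\'e. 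Since $\tilde{\mathcal{K}}(\varphi)\ol\E(h,m)\geq 0$ and all occurring constants depend only on $R\geq\|(\psi,\varphi)\|_{\Y}$ through the corresponding $L^\infty$-norms of $\psi,\varphi$ and their derivatives, this yields the required estimate~\eqref{eq:AKlowerbound.ek}.
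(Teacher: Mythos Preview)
Your proposal is correct and follows essentially the same strategy as the paper: split $\tilde{\mathcal K}(\varphi)\ol\E$ according to the three energy contributions and pair each with the matching term from $-\langle A,(\psi,\varphi)\rangle$, using positive-semidefiniteness of $\tfrac{\tilde{\mathcal K}}{2}\mathbbm I_d+(\nabla\varphi)_{\sym}$ for the kinetic and tensor-gradient terms, nonnegativity of $\diiv\varphi+\tfrac{\tilde{\mathcal K}}{\gamma-1}$ for the pressure and $\tfrac12|\nabla\rho|^2\diiv\varphi$ terms, and the Poincar\'e inequality on $H^1_{(0)}(\Omega)$ for the cross term $\int_\Omega h\,\nabla h\cdot\nabla\diiv\varphi$.

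The only notable differences are organizational. The paper treats the three gradient-density contributions ($\rho\nabla\rho\cdot\nabla\diiv\varphi$, $\tfrac12|\nabla\rho|^2\diiv\varphi$, $\nabla\rho\otimes\nabla\rho:\nabla\varphi$) separately, each paired with its own summand of $\tilde{\mathcal K}(\varphi)\tfrac{|\nabla h|^2}{2}$, whereas you collapse them into a single quadratic form $\mathcal L_{\mathrm{grad}}$ and analyze its Hessian globally; both reach the same inequality. For the lower bound~\eqref{eq:AKlowerbound.ek}, the paper first discards all the pieces already shown nonnegative, leaving only the two genuinely linear terms $\int_\Omega m\cdot\nabla\psi$ and $\int_\Omega\ol\rho\,\nabla h\cdot\nabla\diiv\varphi$, and then applies Young's inequality together with~\eqref{eq:Egrowth.ek}; this is slightly sharper and cleaner than estimating every term of $-\langle A,(\psi,\varphi)\rangle$ individually as you propose, but your route is equally valid.
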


\begin{proof}
Clearly, we can change to the state variables $(\rho,m)=(h+\ol\rho,m)$,
consider the energy $\E$ as in~\eqref{eq:energy.ek},
and study properties of the mapping 
\begin{equation}
    \label{eq:fct.convexified.ek}
(\rho,m) \mapsto - \langle A(\rho-\ol\rho,m),(\psi,\varphi) \rangle +\tilde{\mathcal{K}}(\varphi)\E(\rho,m)
\end{equation}
for $(\rho-\ol\rho,m)\in\V^*$ instead.
Since this mapping
is an integral functional, and $\V^*$ is reflexive,
its convexity and weakly* lower semicontinuity follow 
if the integrand is pointwise convex in $\Omega$. 
We use this fact and we decompose the integrand into several contributions,
which we treat separately
and combine it with the associated term from $\tilde{\mathcal K}(\varphi)\E(h,m)$.

The first contribution in $-A$ from~\eqref{A}, 
namely the integral over $m \cdot \nabla \psi$, 
is linear in $m$ and thus convex.

For the second term in~\eqref{A}, we observe that
\begin{equation*}
    \frac{m \otimes m}{\rho}: \nabla \varphi + \frac{|m|^2}{2\rho}{2\|(\nabla \varphi)_{\sym,-}\|_{L^\infty(\Omega)}} 
    =  \frac{m \otimes m}{\rho}:\left((\nabla \varphi)_{\sym} +\|(\nabla \varphi)_{\sym,-}\|_{L^\infty(\Omega)}\mathbbm{I}_d \right)\geq 0 
\end{equation*}
since both $m \otimes m$ and $\nabla \varphi +\|(\nabla \varphi)_{\sym,-}\|_{L^\infty(\Omega)}\mathbbm{I}_d $  are symmetric positive semidefinite matrices. 
Therefore, this defines a nonnegative and quadratic function in $m$, 
which is thus convex.
 
For the third contribution in~\eqref{A}, which is associated with the pressure term,
we have
 \begin{align*}
     & \rho^\gamma \mathbbm{I}_d:\nabla \varphi + \max\{\gamma-1, 1\}\|(\diiv \varphi)_{-}\|_{L^\infty(\Omega)}
     \frac{1}{\gamma -1}\rho^\gamma \\
     &\qquad= \rho^\gamma (\diiv \varphi)_+ +\rho^\gamma \Big( (\diiv \varphi)_- +  \frac{1}{\gamma -1}\max\{\gamma-1, 1\}\|(\diiv \varphi)_{-}\|_{L^\infty(\Omega)} \Big)  \geq 0,
 \end{align*}
 which corresponds to a nonnegative multiple of the convex mapping $\rho \mapsto \rho^\gamma$ in every point in $\Omega$. Therefore, this function is also convex.
 
 In order to examine the terms related to the 
 fourth contribution in~\eqref{A}, 
 we use that $h=\rho-\ol\rho$ has vanishing mean value.
 We obtain
 \[
    \begin{aligned}
    &\int_\Omega \rho \nabla \rho \cdot \nabla\diiv \varphi \dd{x}
    +2c_P\|\nabla\diiv\varphi\|_{L^\infty(\Omega)}\frac{1}{2}\|\nabla\rho\|_{L^2(\Omega)}^2
    \\
    &\qquad
    = \int_\Omega\ol\rho\nabla h\cdot  \nabla\diiv \varphi\dd{x}
    +\int_\Omega h\nabla h  \cdot  \nabla(\diiv \varphi)\dd{x}
    + c_P\|\nabla\diiv\varphi\|_{L^\infty(\Omega)}\|\nabla h\|_{L^2(\Omega)}^2.
    \end{aligned}
 \]
 While the first term is linear in $\nabla h$, the other terms define a quadratic functional in $(h,\nabla h)$,
 which is nonnegative
 due the Poincar\'e inequality~\eqref{eq:poincare}.
 Moreover, the weak lower semicontinuity follows directly from the compact embedding $H^1(\Omega)\hookrightarrow L^2(\Omega)$.
For the second last contribution in~\eqref{A},
we note that
\[
\begin{aligned}
&\frac{1}{2}|\nabla \rho|^2 \diiv \varphi +\max\{\gamma-1, 1\}\|(\diiv \varphi)_{-}\|_{L^\infty(\Omega}\frac{1}{2}|\nabla \rho|^2  \\
&\qquad
= \frac{1}{2}|\nabla \rho|^2 (\diiv \varphi)_+ 
+ \frac{1}{2}|\nabla \rho|^2 \Big( (\diiv \varphi)_- +\text{max}\{\gamma-1, 1\}\|(\diiv \varphi)_{-}\|_{L^\infty(\Omega)}\Big) 
\geq 0,
\end{aligned}
\]
which is a nonegative multiple of $|\nabla\rho|^2$ and thus convex in $\nabla\rho$.

With respect to the last term in~\eqref{A}, 
we observe that
\begin{align*}
\nabla \rho \otimes \nabla \rho :\nabla \varphi  + \frac{1}{2}|\nabla \rho|^22\|(\nabla \varphi)_{\sym,-}\|_{L^\infty(\Omega)} 
=\nabla \rho \otimes \nabla \rho : \Big( (\nabla \varphi)_{\sym}+\|(\nabla \varphi)_{\sym,-}\|_{L^\infty(\Omega)}\mathbbm{I}_d\Big)
\geq 0
\end{align*}
since $\nabla \rho \otimes \nabla \rho$ and $(\nabla \varphi)_{\text{sym}}+\|(\nabla \varphi)_{\text{sym},}\|_{L^\infty}\mathbbm{I}_d$ are symmetric positive semidefinite matrices.
Therefore, this term is quadratic and nonnegative and thus convex in $\nabla\rho$.

In conclusion, the mapping from~\eqref{eq:fct.convexified.ek} is 
convex and weakly* lower semicontinuous,
which implies the asserted convexity and semicontinuity properties.

To check the validity of the inequality \eqref{eq:AKlowerbound.ek},
we combine the previous estimates on the single contributions and obtain
by Young's inequality that
\[
\begin{aligned}
&-\langle A(\rho-\ol\rho,m),(\psi,\varphi) \rangle +\tilde{\mathcal{K}}(\varphi)\E(\rho,m)
\geq 
\int_\Omega m \cdot \nabla \psi+\ol\rho\nabla \rho\cdot  \nabla\diiv \varphi\dd{x}
\\
&\qquad
\geq -c_1\int_\Omega |m|^{\frac{2\gamma}{\gamma+1}}+|\nabla \psi|^{\frac{2\gamma}{\gamma-1}} \dd{x}
-c_2\int_\Omega \rho^\gamma +(\ol\rho |\nabla \diiv \varphi|)^{\frac{\gamma}{\gamma-1}}\dd{x}
\geq -c_3\big(\E(\rho,m)+1\big)
\end{aligned}
\]
by the coercivity estimate~\eqref{eq:Egrowth.ek},
where $c_1,c_2,c_3>0$, 
and $c_3$ only depends on $R>0$ if $\|(\psi,\varphi)\|_{\Y}\leq R$.
This shows~\eqref{eq:AKlowerbound.ek} and completes the proof.
\end{proof}

Finally, we verify that the choice of $\mathcal K$ as in~\eqref{eq:K.lsc.ek}
ensures the weak* lower semicontinuity from assumption~\ref{ass:K.lsc}.

\begin{lemma}\label{lsc ek}
    Let $\ol\rho\geq 0$.
    For each $(\psi,\varphi) \in \Y$,
    the mapping 
    \[
    (h,m) \mapsto - \langle A(h,m),(\psi,\varphi) \rangle +{\mathcal{K}}(\varphi)\ol\E(h,m),
    \]
    with $A$ and ${\mathcal K}$ as in~\eqref{A} and~\eqref{eq:K.lsc.ek},
    is weakly* lower semicontinuous in $\V^*$.
\end{lemma}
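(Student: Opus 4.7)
The proof mirrors the decomposition strategy of Lemma~\ref{convex ek}, with one essential modification: since $\mathcal K$ in~\eqref{eq:K.lsc.ek} lacks the Poincar\'e budget $2c_P\|\nabla\diiv\varphi\|_{L^\infty(\Omega)}$ that appears in $\tilde{\mathcal K}$, the non-convex cross term $\int_\Omega h\nabla h\cdot\nabla\diiv\varphi\dd{x}$ can no longer be absorbed into a non-negative convex quadratic form in $(h,\nabla h)$. Instead I treat it as a continuous term under weak convergence by exploiting the compact embedding $H^1(\Omega)\hookrightarrow L^2(\Omega)$.

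Concretely, let $(h_n,m_n)\rightharpoonup(h,m)$ in $\V^*$, which by reflexivity coincides with weak* convergence. Uniform boundedness of weakly convergent sequences gives that $\{h_n\}$ is bounded in $H^1_{(0)}(\Omega)\cap L^\gamma(\Omega)$ and $\{m_n\}$ is bounded in $L^{\frac{2\gamma}{\gamma-1}}(\Omega;\R^d)$. By Rellich--Kondrachov I pass to a (not relabeled) subsequence with $h_n\to h$ strongly in $L^2(\Omega)$ while $\nabla h_n\rightharpoonup\nabla h$ weakly in $L^2(\Omega;\R^d)$.

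I then split the integrand defining $F(h,m):=-\langle A(h,m),(\psi,\varphi)\rangle+\mathcal K(\varphi)\ol\E(h,m)$ exactly as in the proof of Lemma~\ref{convex ek}, except that the fourth contribution $\int_\Omega(h+\ol\rho)\nabla h\cdot\nabla\diiv\varphi\dd{x}$ is left unpaired with any portion of $\mathcal K(\varphi)\ol\E(h,m)$. This yields three kinds of summands. The linear pieces $\int_\Omega m\cdot\nabla\psi\dd{x}$ and $\int_\Omega\ol\rho\,\nabla h\cdot\nabla\diiv\varphi\dd{x}$ pass to the limit by weak convergence of $m_n$ and $\nabla h_n$. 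The cross term $\int_\Omega h\nabla h\cdot\nabla\diiv\varphi\dd{x}$ also converges in the full limit, since $h_n\,\nabla\diiv\varphi\to h\,\nabla\diiv\varphi$ strongly in $L^2(\Omega;\R^d)$ while $\nabla h_n\rightharpoonup\nabla h$ weakly in $L^2(\Omega;\R^d)$. The remaining four blocks inherited from Lemma~\ref{convex ek}---associated with $\frac{|m|^2}{\rho}$, $\rho^\gamma$, $\frac{|\nabla h|^2}{2}\diiv\varphi$ and $\nabla h\otimes\nabla h:\nabla\varphi$ together with their assigned portions of $\mathcal K(\varphi)\ol\E$---are pointwise convex and non-negative in $(\rho,m,\nabla h)$; the coefficients in $\mathcal K$ are chosen precisely so that this structure persists with the smaller weight. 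By the convex-integrand argument already used in Lemma~\ref{convex ek} these pieces are weakly* lower semicontinuous on $\V^*$, and the unused remainders of $\mathcal K(\varphi)\ol\E(h,m)$ are non-negative multiples of the coercive energy, hence weakly* lower semicontinuous as well.

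Combining the three ingredients gives $\liminf_{n\to\infty}F(h_n,m_n)\geq F(h,m)$, which is the claimed weak* lower semicontinuity. The \emph{main obstacle} is precisely the cross term $\int_\Omega h\nabla h\cdot\nabla\diiv\varphi\dd{x}$: within the convexity framework of Lemma~\ref{convex ek} it was absorbed via a Young--Poincar\'e estimate funded by $\tilde{\mathcal K}$, whereas with the smaller weight $\mathcal K$ this budget is unavailable and one must switch to the genuinely stronger compactness furnished by $H^1(\Omega)\hookrightarrow L^2(\Omega)$. This is exactly where the distinction between the two regularity weights becomes visible in the existence theory for the Euler--Korteweg system.
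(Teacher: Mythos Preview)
Your proof is correct and follows essentially the same route as the paper's own argument. The paper's proof is terser---it simply observes that all terms except the functional $\rho\mapsto\int_\Omega \rho\,\nabla\rho\cdot\nabla(\diiv\varphi)\dd{x}$ can be handled by repeating the reasoning of Lemma~\ref{convex ek}, and that this remaining term is weakly continuous by the compact embedding $H^1(\Omega)\hookrightarrow L^2(\Omega)$---but your more explicit decomposition and the identification of the cross term $\int_\Omega h\,\nabla h\cdot\nabla\diiv\varphi\dd{x}$ as the one place where compactness replaces convexity is exactly the point. One minor expository remark: when you pass to a subsequence via Rellich--Kondrachov, you should first extract a subsequence realizing the $\liminf$ before refining further, so that the final inequality transfers back to the original sequence; this is standard and implicit in your sketch.
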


\begin{proof}
Writing again $\rho=h+\ol\rho$, 
we only have to show the weak* lower semicontinuity of the functional
\[
\rho\mapsto\int_\Omega \rho \nabla \rho \cdot \nabla(\diiv \varphi) \dd{x},
\]
defined on $L^{\gamma}(\Omega)\cap H^1(\Omega)$
since for the remaining terms we can repeat the reasoning from Lemma~\ref{convex ek}. 
However, this property follows directly from the compact embedding $H^1(\Omega) \hookrightarrow L^2(\Omega)$.
\end{proof}

After these preparations, 
we can finally prove the existence of energy-variational solutions to the Euler--Korteweg system.

\begin{proof}[Proof of Theorem~\ref{thm:existence.EulerKorteweg}]
Consider initial data $(\rho_0,m_0)$ and define 
$\ol\rho:=\frac{1}{|\Omega|}\int_\Omega\rho_0\dd{x}$.
Firstly, we observe that spaces $\V^*$ and $\Y$ from \eqref{spaces} satisfy \ref{space as}, 
and the energy $\ol\E\colon\V^* \rightarrow [0,\infty]$ from~\eqref{eq:energy.ek.hm} satisfies~\ref{energy as} due to Lemma~\ref{E coerc}.
Setting $\Psi\equiv 0$, we have verified~\ref{ass energy bal} in Lemma~\ref{lem:energybalance.ek}. 
Moreover, by Lemma~\ref{convex ek} and Lemma~\ref{lsc ek}, assumptions \ref{ass:K.lsc} and \ref{ass:K.conv} are satisfied.
In conclusion, Theorem \ref{main theorem1} implies the existence of an energy-variational solution
$(U,E)=(h,m,E)$ 
to the abstract evolution equation~\eqref{eq:general}
in the sense of Definition~\ref{sol def}
with initial value $U_0=(\rho_0-\ol\rho,m_0)$ and $E(0+)=\ol\E(\rho_0-\ol\rho,m_0)$.
Observe that we can consider test functions with values in the full vector space $\Y$
since $\dom\ol\E^*=\V^{**}=\V$ as was shown in Lemma~\ref{E coerc},
compare Remark~\ref{rem:testfunctions}.
Defining now $\rho=h+\ol\rho$,
we obtain an energy-variational solution 
to the Euler--Korteweg system~\eqref{au}
as claimed.
\end{proof}

\section{Binormal curvature flow}
\label{sec:binormal}

In this section, we apply the general theory to the binormal curvature flow,
which describes the motion of a curve in the three-dimensional space 
in the direction of the binormal vector, at a speed proportional to the curvature.
Consider a closed curve with (time-dependent) arc-length parametrization $\gamma:[0,T]\times\R \to  \R^3$,
that is, $|\partial_s\gamma(t,s)|=1$ 
and $\gamma(t,s+L)=\gamma(t,s)$ for $(t,s)\in[0,T]\times\R$, 
where $L>0$ is the length of the curve.
The curve evolves according to binormal curvature flow 
if $\gamma$ is subject to
\begin{align}\label{eq:strongBinormal}
    \t \gamma  = \partial _s\gamma  \times \partial_{ss}\gamma \quad \text{ in } [0,T]\times \R. 
\end{align}
The binormal curvature flow describes the asymptotic evolution of vortex rings according to the incompressible Euler equations. 
We refer to~\cite{BNCF1} and~\cite{BNCF2} for 
more details on this connection,
and an introduction into the general topic
and the associated generalized solution concepts.
By studying the binormal curvature flow in terms of energy-variational solutions, 
we treat them in the same solution framework as the Euler equations,
compare~\cite{envarhyp}.
Such a combination of both systems can be beneficial, 
for instance, for investigating the singular limit. 

We note that the solution framework introduced below is less regular than the one considered by Jerrad and Smets~\cite{BNCF1}.
Firstly, we consider general divergence-free measures in the context of energy-varatiational solutions, which constitute a convex superset of the integral 1-currents used in~\cite{BNCF1}. 
In other words, in the proposed framework we lose a certain level of regularity of the solution, especially the $1$-rectifiability of the curves.
Secondly, we have the restriction on the class of test functions 
since the energy functional has only linear growth,
compare Remark~\ref{rem:testfunctions}.
Omitting this restriction, we further show that the concept of energy-variational solutions is selective enough to retain a weak-strong uniqueness principle in the same spirit as in~\cite{BNCF1}.

\subsection{Energy-variational framework}
To derive a suitable energy-variational formulation of the binormal curvature flow~\eqref{eq:strongBinormal},
we first consider a weak formulation that was first introduced in~\cite{Jerrard2002vortexfilamentdyn}.

\begin{lemma}\label{lem:binormal.weakform}
    Let $\gamma\colon[0,T]\times\mathbbm T^1\to\R^3$ be a smooth solution to~\eqref{eq:strongBinormal},
    where $\mathbbm T^1:=\R/\ell\mathbbm Z$ for some $\ell>0$.
    Then for all $X\in \C^1([0,T];\C^2(\R^3;\R^3))$ it holds 
    \begin{equation}\label{eq:binormal.weak}
    \Big[\int_{\gamma_\sigma} X\cdot\tau_{\gamma_\sigma} \de\mathcal H^1\Big]\Big|_{\sigma=0}^t 
    =\int_0^t \int_{\gamma_\sigma} \partial_t X \cdot\tau_{\gamma_\sigma} \de\mathcal H^1\de\sigma
    -\int_{\gamma_\sigma}\nabla(\curl X):\tau_{\gamma_\sigma}\otimes\tau_{\gamma_\sigma}\de\mathcal H^1 \de\sigma
    \end{equation}
    for all $t\in[0,T]$, where
    $\gamma_\sigma:=\gamma(\sigma,\cdot)$ for $\sigma\in[0,T]$ and $\tau_{\gamma_\sigma}$ is the oriented tangent vector at $\gamma_\sigma$.
\end{lemma}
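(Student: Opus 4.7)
The plan is to reduce the geometric line integrals to ordinary Riemann integrals over the parameter circle via the arc-length parametrization, differentiate under the integral sign, and then invoke the PDE~\eqref{eq:strongBinormal}. Since $|\partial_s\gamma|\equiv 1$ and $\tau_{\gamma_\sigma}=\partial_s\gamma(\sigma,\cdot)$, for every $\sigma\in[0,T]$ we have
\[
\int_{\gamma_\sigma}X\cdot\tau_{\gamma_\sigma}\de\mathcal{H}^1=\int_{\mathbbm{T}^1}X(\gamma(\sigma,s))\cdot\partial_s\gamma(\sigma,s)\de s.
\]
Writing $\tau:=\partial_s\gamma$ and applying $\partial_\sigma$ under the integral, the chain rule together with the identity $\partial_\sigma\partial_s\gamma=\partial_s\partial_t\gamma$ produces three contributions: the desired term $\partial_tX(\gamma)\cdot\tau$, a ``convective'' term $(\nabla X(\gamma)\,\partial_t\gamma)\cdot\tau$, and $X(\gamma)\cdot\partial_s\partial_t\gamma$. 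I would integrate the last contribution by parts in $s$, using $\ell$-periodicity of $\gamma(\sigma,\cdot)$ on $\mathbbm{T}^1$ to eliminate the boundary terms, and rewrite it as $-(\nabla X(\gamma)\,\tau)\cdot\partial_t\gamma$.

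The key step is then to recognize that the two remaining contributions combine to the skew-symmetric expression $\tau\cdot\bigl[(\nabla X-(\nabla X)^{T})\partial_t\gamma\bigr]=\tau\cdot\bigl[(\curl X)\times\partial_t\gamma\bigr]$, where I exploit the standard identity relating the antisymmetric part of $\nabla X$ to $\curl X$. Substituting $\partial_t\gamma=\tau\times\partial_s\tau$ from the PDE and applying the Grassmann (BAC--CAB) identity, together with $\tau\cdot\partial_s\tau=\tfrac{1}{2}\partial_s|\tau|^2=0$, I arrive at
\[
\tau\cdot\bigl[(\curl X)\times(\tau\times\partial_s\tau)\bigr]=(\curl X)\cdot\partial_s\tau.
\]
A second integration by parts in $s$, again justified by periodicity, then rewrites this integrand as $-\nabla(\curl X):\tau\otimes\tau$.

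Collecting everything yields
\[
\frac{\de}{\de\sigma}\int_{\gamma_\sigma}X\cdot\tau_{\gamma_\sigma}\de\mathcal{H}^1=\int_{\gamma_\sigma}\partial_tX\cdot\tau_{\gamma_\sigma}\de\mathcal{H}^1-\int_{\gamma_\sigma}\nabla(\curl X):\tau_{\gamma_\sigma}\otimes\tau_{\gamma_\sigma}\de\mathcal{H}^1,
\]
and integrating in $\sigma$ from $0$ to $t$ gives~\eqref{eq:binormal.weak}. The principal technical point is the careful coordination of the two integrations by parts with the two vector-calculus identities (the curl/skew identity and BAC--CAB); once this bookkeeping is done, no analytic difficulty remains, since the entire computation is performed on smooth periodic curves.
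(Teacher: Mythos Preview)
Your argument is correct: the parametrize--differentiate--integrate-by-parts scheme, combined with the skew/curl identity and the BAC--CAB expansion, yields exactly~\eqref{eq:binormal.weak}. The paper itself does not reproduce a proof here but simply invokes \cite[Lemma~1]{BNCF1}; your write-up is essentially the self-contained computation behind that citation, so the approaches coincide in substance, with yours supplying the omitted details.
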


\begin{proof}
    The result follows directly from~\cite[Lemma 1]{BNCF1}.
\end{proof}

Since~\eqref{eq:binormal.weak} only involves first-order spatial derivatives of $\gamma$ in terms of the tangent vector $\tau_t$,
this identity may serve as the basis of a weak formulation of~\eqref{eq:strongBinormal},
compare~\cite{Jerrard2002vortexfilamentdyn,BNCF1}.
Moreover, a straightforward generalization of~\eqref{eq:binormal.weak} is to 
consider regular vector measures $\mu\in \mathcal M(\R^3;\R^3)$ instead of curves.
For those, the tangent field $\tau_\mu=\frac{\dd\mu}{\dd|\mu|}$
is defined as the Radon--Nikodym derivative of $\mu$ with respect to $|\mu|$,
and the weak form~\eqref{eq:binormal.weak} is replaced with
    \begin{align}
        \label{eq:binormal.measures}
\Big[\int_{\R^3} X \cdot \de \mu_\sigma\Big]\Big|_{\sigma=0}^t =\int_0^t \int_{\R^3} \partial_t X \cdot \de \mu_\sigma\de\sigma - \nabla (\curl X) : \tau_{\mu_\sigma}\otimes \tau_{\mu_\sigma} \de |\mu_\sigma| \de \sigma \,
    \end{align}
for $X\in \C^1([0,T];\C^2(\R^3;\R^3))$,
where $\mu_\sigma=\mu(\sigma)\in\mathcal M(\R^3;\R^3)$.
Observe that a curve pameterized by $\gamma\colon\mathbbm T^1\to\R^3$ can be identified with the Radon measure $\mu=\tau_\gamma\mathcal H^1\lfloor_{M}$,
where $M=\gamma(\mathbbm T^1)$,
that is,
\begin{equation}\label{eq:curves.as.measures}
    \forall X\in \C_0(\R^3;\R^3):\ \int_{\R^3} X \cdot\de\mu_{\gamma}
    =\int_{M} X\cdot \tau_{\gamma}\de\mathcal H^1
    =\int_{\mathbbm T^1} X(\gamma_t(s))\cdot\partial_s \gamma_t(s)\de s.
\end{equation}
In virtue of this identification, the formuation~\eqref{eq:binormal.measures} is consistent with~\eqref{eq:binormal.weak}
in the following sense.

\begin{lemma}[Consistency with smooth solutions]
    \label{lem:consistency.transport}
    Let $\gamma\colon[0,T]\times\mathbbm T^1\to\R^3$ be a smooth solution to~\eqref{eq:strongBinormal}.
    Set $M_t=\gamma_t(\mathbbm T^1)$ for $t\in[0,T]$,
    and define the measure $\mu_{\gamma_t}=\tau_{\gamma_t} \mathcal H^1\lfloor_{M_t}\in\mathcal M(\R^3;\R^3)$.    
    Then $\mu_\gamma$ fulfills~\eqref{eq:binormal.measures}
    for all $X\in \C^1([0,T];\C_c^2(\R^3;\R^3))$ and $t\in[0,T]$.
\end{lemma}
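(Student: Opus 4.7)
The plan is to reduce the assertion directly to Lemma~\ref{lem:binormal.weakform} via the measure-theoretic identification~\eqref{eq:curves.as.measures}. No new analytic input is required; what must be checked is only that the two sides of~\eqref{eq:binormal.measures} coincide termwise with their parametric counterparts in~\eqref{eq:binormal.weak}. Note that since $\C_c^2(\R^3;\R^3)\subset\C^2(\R^3;\R^3)$, every admissible $X$ in the present lemma is also admissible in Lemma~\ref{lem:binormal.weakform}.

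The key computation is the polar decomposition of $\mu_{\gamma_t}$. Since $\gamma_t\colon\mathbbm T^1\to\R^3$ is smooth and parametrized by arc length, the unit-length property $|\tau_{\gamma_t}(x)|=1$ holds for $\mathcal H^1$-a.e.~$x\in M_t$. Consequently, the total-variation measure of $\mu_{\gamma_t}=\tau_{\gamma_t}\mathcal H^1\lfloor_{M_t}$ satisfies $|\mu_{\gamma_t}|=\mathcal H^1\lfloor_{M_t}$, and the Radon--Nikodym derivative with respect to $|\mu_{\gamma_t}|$ is $\tau_{\mu_{\gamma_t}}=\tau_{\gamma_t}$ almost everywhere. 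Hence, for any $Y\in\C_0(\R^3;\R^3)$ and any continuous matrix field $\Xi\colon\R^3\to\R^{3\times 3}$ with compact support, we have
\[
\int_{\R^3}Y\cdot\de\mu_{\gamma_t}=\int_{M_t}Y\cdot\tau_{\gamma_t}\de\mathcal H^1,\qquad
\int_{\R^3}\Xi:\tau_{\mu_{\gamma_t}}\otimes\tau_{\mu_{\gamma_t}}\de|\mu_{\gamma_t}|=\int_{M_t}\Xi:\tau_{\gamma_t}\otimes\tau_{\gamma_t}\de\mathcal H^1.
\]

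Applying these identities pointwise in $\sigma$ with $Y=X(\sigma,\cdot)$ and $Y=\partial_t X(\sigma,\cdot)$ in the first two terms of~\eqref{eq:binormal.measures}, and with $\Xi=\nabla(\curl X)(\sigma,\cdot)$ in the last term, the identity~\eqref{eq:binormal.measures} reduces exactly to~\eqref{eq:binormal.weak}, which holds by Lemma~\ref{lem:binormal.weakform}. The argument contains no genuine analytical obstacle; it is a pure compatibility check between the parametric and measure-theoretic formulations, and the only step requiring some care is the polar decomposition of $\mu_{\gamma_t}$, which ultimately rests on the arc-length parametrization of $\gamma_t$.
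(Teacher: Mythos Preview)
Your proof is correct and follows exactly the route indicated in the paper: the paper's own proof is the single sentence ``This follows directly from Lemma~\ref{lem:binormal.weakform} and the previous considerations,'' and you have simply unpacked what those previous considerations amount to, namely the identification~\eqref{eq:curves.as.measures} together with the polar decomposition $|\mu_{\gamma_t}|=\mathcal H^1\lfloor_{M_t}$ and $\tau_{\mu_{\gamma_t}}=\tau_{\gamma_t}$.
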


\begin{proof}
    This follows directly from Lemma~\ref{lem:binormal.weakform} and the previous considerations.
\end{proof}

The existence of weak solutions defined through~\eqref{eq:binormal.weak} or~\eqref{eq:binormal.measures} is unknown,
which motivated the study of a more generalized solution concept in~\cite{BNCF1}
based on the notion of varifolds. 
We follow here a different direction and introduce energy-variational solutions
by adapting Definition~\ref{sol def} to the current framework.

To obtain a Banach space that is suitable for our analysis,
it is natural to consider divergence-free measures $\mu\in\mathcal M(\R^3;\R^3)$.
This class is the weak* closure of the linear span
of the set of closed oriented curves, if we identify them with measures in the sense of~\eqref{eq:curves.as.measures}.
Moreover, an oriented curve is closed if and only if 
the integral over any gradient field along the curve vanishes,
which means that the associated measure is divergence free.

As the underlying space, we consider the closure of all smooth and compactly supported 
divergence-free vector fields, given by
\[
\V:=\overline{\{\varphi\in\C_c^\infty(\R^3;\R^3)\mid\diiv\varphi=0\}}^{\|\cdot\|_{\C_0(\R^3;\R^3)}}
\subset \C_0(\R^3;\R^3).
\]
Then the state space is given by the class of divergence-free Radon measures.
\begin{proposition}
The dual space of $\V$ can be identified with
\[
\V^*=\{\mu\in\mathcal M(\R^3;\R^3)\mid \diiv\mu=0 \text{ in the sense of distributions}\}.
\]
\end{proposition}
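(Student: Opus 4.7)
The plan is to establish an isometric identification between $\V^*$ and the set $\mathcal M_0:=\{\mu\in\mathcal M(\R^3;\R^3)\mid \diiv\mu=0\}$ via the canonical pairing $\Phi(\mu)(\varphi):=\int_{\R^3}\varphi\cdot \de\mu$. Continuity of $\Phi(\mu)$ with $\|\Phi(\mu)\|_{\V^*}\le \|\mu\|_{\mathcal M(\R^3;\R^3)}$ is immediate. Since $\V$ is by definition a closed subspace of $\C_0(\R^3;\R^3)$, the Hahn--Banach theorem combined with the Riesz representation $\C_0(\R^3;\R^3)^*\cong \mathcal M(\R^3;\R^3)$ yields $\V^*\cong \mathcal M(\R^3;\R^3)/\V^\perp$; the strategy is to identify $\V^\perp$ with the gradient measures and to exhibit a unique divergence-free representative in every coset.

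I would first characterize $\V^\perp$. If $\nu\in \V^\perp$, then in particular $\int\varphi\cdot \de\nu=0$ for every divergence-free $\varphi\in \C_c^\infty(\R^3;\R^3)$, so the distributional Poincar\'e lemma on $\R^3$ produces a distribution $p$ with $\nu=\nabla p$. Conversely, any such gradient measure annihilates $\V$ by density and an integration by parts that uses the fact that elements of $\V$ are distributionally divergence-free. Injectivity of $\Phi$ then reduces to the following: if $\mu\in \mathcal M_0$ is a gradient $\nabla p$, then $\Delta p=0$ distributionally, so $p$ is harmonic and smooth and $\nabla p$ is a componentwise harmonic field that is simultaneously a finite Radon measure; the mean-value property then forces this locally integrable harmonic field of globally finite total variation to vanish, whence $\mu=0$.

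For surjectivity, given $\Lambda\in \V^*$, Hahn--Banach yields an extension to $\C_0(\R^3;\R^3)$ represented by some $\tilde\mu\in\mathcal M(\R^3;\R^3)$, and I aim to find a distribution $p$ with $\nabla p\in\mathcal M(\R^3;\R^3)$ solving $\Delta p=\diiv\tilde\mu$, so that $\mu:=\tilde\mu-\nabla p\in \mathcal M_0$ still represents $\Lambda$. The isometry $\|\Phi(\mu)\|_{\V^*}=\|\mu\|_{\mathcal M(\R^3;\R^3)}$ then follows from a Helmholtz-type density argument: any $\psi\in \C_c^\infty(\R^3;\R^3)$ decomposes as $\psi=\psi_{\mathrm{sol}}+\nabla q$ with $q$ the Newtonian potential of $\diiv\psi$, and the solenoidal part $\psi_{\mathrm{sol}}$ can be approximated in sup norm by divergence-free $\C_c^\infty$ fields via a cut-off combined with a Bogovskii correction, whose smallness follows from Sobolev embeddings in three dimensions and the algebraic decay of $\psi_{\mathrm{sol}}$ at infinity.

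The hard part will be the existence step in the surjectivity argument: solving $\Delta p=\diiv\tilde\mu$ with $\nabla p$ a finite Radon measure when $\diiv\tilde\mu$ is only a first-order distribution. The Newtonian-potential formula does not automatically yield a gradient with measure regularity, so the argument must exploit that $\tilde\mu$ itself is a Radon measure; this can be done either through an explicit Helmholtz decomposition adapted to $\mathcal M(\R^3;\R^3)$, or dually via the continuity of an appropriate projection mapping a suitable subspace of $\C_0(\R^3;\R^3)$ onto $\V$.
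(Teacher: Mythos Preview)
Your approach is substantially more detailed than the paper's, and it addresses points the paper omits entirely: the paper's proof never discusses injectivity of $\Phi$ nor the isometry, only surjectivity. Your injectivity argument via Weyl's lemma and the mean-value property for $L^1$ harmonic functions is correct and fills a real gap.

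For surjectivity the paper takes a different, shorter route that avoids the step you flag as hard. Rather than extending $\ell$ arbitrarily by Hahn--Banach and then correcting by subtracting a gradient, the paper first defines the extension on the algebraic sum $\V\oplus\mathbbm G$ (with $\mathbbm G=\{\nabla\psi:\psi\in \C_c^\infty(\R^3)\}$) by $\tilde\ell(\varphi+\nabla\psi):=\ell(\varphi)$, using only $\V\cap\mathbbm G=\{0\}$, and then invokes Hahn--Banach to extend to $\C_0(\R^3;\R^3)$. The representing measure is then automatically divergence-free since it annihilates all gradients. This bypasses the PDE $\Delta p=\diiv\tilde\mu$ entirely.

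There is a genuine obstacle in your surjectivity plan as written. Producing $p$ with $\Delta p=\diiv\tilde\mu$ and $\nabla p\in\mathcal M(\R^3;\R^3)$ for arbitrary $\tilde\mu\in\mathcal M(\R^3;\R^3)$ is precisely the Helmholtz/Leray projection on the space of finite measures, and this operator is \emph{not} bounded there (it is built from Riesz transforms, which fail on $L^1$ and on $\mathcal M$). Concretely, for $\tilde\mu=\delta_0 e_1$ the candidate $p=\partial_1 G$ yields $\nabla p=\nabla\partial_1 G$, a principal-value Calder\'on--Zygmund kernel that is not a Radon measure, and no harmonic correction fixes this local singularity. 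So the ``explicit Helmholtz decomposition adapted to $\mathcal M(\R^3;\R^3)$'' you allude to does not exist in the form your argument requires. Your alternative dual suggestion, continuity of a solenoidal projection on a subspace of $\C_0$, is exactly what would be needed to justify boundedness of $\tilde\ell$ on $\V+\mathbbm G$ in the paper's argument; the paper asserts this step without further comment, so both routes ultimately rest on the same unproven ingredient.
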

\begin{proof}
    One the one hand, it is clear that any $\mu\in\mathcal M(\R^3;\R^3)$ with $\diiv\mu=0$
    defines a linear functional on $\V$ through $\varphi\mapsto\int_{\R^3}\varphi\cdot\dd{\mu}$.
    On the other hand, let $\ell\in\V^*$, and define the class of the gradient fields by $\mathbbm G=\{\nabla\psi\mid\psi\in C_c^\infty(\R^3)\}$. Since $\mathbbm G\cap\V=\{0\}$,
    the Hahn--Banach theorem guarantees the existence of an extension $\tilde{\ell}\in\mathcal \C_0(\R^3;\R^3)^*$ such that $\tilde\ell(\varphi+\xi)=\ell(\varphi)$ 
    for all $\varphi\in\V$ and $\xi\in\mathbbm G$.
    By the Riesz representation theorem, $\tilde{\ell}$ is given by a measure $\mu$,
    and we conclude that $\mu$ is divergence-free.
\end{proof}

On $\V^*$ we define the energy 
$\mathcal{E}: \V^* \to [0,\infty]$ via 
\begin{equation}
    \label{eq:entropy.binormal}
    \mathcal{E}(\mu) 
    :=\int_{\R^3} \de |\mu| 
    =\|\mu\|_{\mathcal M(\R^3;\R^3)},
\end{equation}
and we have $\dom\E=\V^*$.
The space $\Y$ of test functions is given by 
\[
\Y : = \{\varphi\in\C_0^\infty(\R^3;\R^3)\mid \diiv\varphi=0\}
\] 
The dissipation potential is trivial, that is, $ \Psi \equiv 0$, and 
the operator $ A : \dom \E \to \Y^*$ is given by 
 \begin{align}
     \langle A (\mu) , \varphi \rangle := \int_{\R^3} \nabla (\nabla \times \f \varphi) : \tau _{\mu}\otimes \tau_{\mu} \de |\mu| \,,
 \end{align}
with tangent field $ \tau_\mu  = \frac{\de \mu}{\de |\mu|}$.
Now, we are in the position to transfer the general definition of energy-variational solutions~(Definition~\ref{sol def}) to the current context.
As is shown below, a suitable regularity weight to ensure existence is given by
\begin{equation}
\label{eq:regweigth.binormal}
\mathcal{K}(\f \varphi):= 3\| ( \nabla ( \nabla \times \f \varphi))_{\sym}\|_{L^\infty(\R^3;\R^{3\times 3 })}. 
\end{equation}

\begin{definition}\label{def:binormal}
    Let $\mu_0\in \mathcal M(\R^3;\R^3)$ with $\diiv\mu_0=0$.
    We call $ (\mu, E)\in L^\infty_{w^*} (0,T; \mathcal{M}(\R^3;\R^3))\times \BV([0,T])$ an energy-variational solution to the binormal curvature flow
    with initial value $\mu_0$ if $ E \geq \E(\mu)$ a.e.~in $(0,T)$ and if
    \begin{align}\label{envar:binormal}
    \begin{split}
                        \left [ E - \int_{\R^3} \f \varphi \cdot \de \mu  \right]\Big|_s^t + \int_s^t\int_{\R^3}  \t \f \varphi\cdot \de \mu - \int_{\R^3} \nabla (\nabla \times \f \varphi) : \tau_\mu \otimes \tau_\mu \de |\mu| 
        \de \tau 
        \\
        +\int_s^t 3 \| ( \nabla ( \nabla \times \f \varphi))_{\sym}\|_{L^\infty(\R^3;\R^{3\times 3 })} \left [ \int_{\R^3} \de |\mu| - E \right]\de \tau \leq 0
        \end{split}
\end{align}
    for all $ \f \varphi \in \C^1([0,T]; \C^2_0(\R^3;\R^3))$ with $\diiv\varphi=0$ and 
    $|\varphi|\leq 1$,
    and for a.e.~$0\leq s<t\leq T$, including $s =0 $ with $ \mu(0) = \mu_0$. 
\end{definition}

\begin{remark}
    \label{rem:testfct.binormal}
    We only consider test functions $\varphi$ with $|\varphi|\leq 1$ in Definition~\ref{def:binormal}.
    This constraint corresponds to the restriction in Definition~\ref{sol def}
    that the test functions take values in $\Y\cap\dom\partial\E^*$,
    see Lemma~\ref{lem:energy.binormal} below.
    Further note that the formulation~\eqref{envar:binormal} is invariant under addition of gradient fields $\nabla\psi$ to the test functions
    since $\mu$ is divergence-free and $\curl\nabla\psi=0$,
    so that a larger class of functions would be admissible.
    For instance, one may consider compactly supported test functions 
    $\varphi\in\C^1([0,T];\C^\infty_c(\R^3;\R^3))$ 
    with Helmholtz decomposition $\varphi=\curl\chi+\nabla\psi$
    such that $|\curl\chi|\leq 1$.
    Note that the Helmholtz decomposition is indeed available for $\varphi\in\C^\infty_c(\R^3;\R^3)$,
    and one can construct $\chi\in\C_0^\infty(\R^3;\R^3)$ and $\psi\in\C_0^\infty(\R^3)$ by the vector potentials
    \[
    \chi(x)=\frac{1}{4\pi}\int_{\R^3}\frac{\curl\varphi(y)}{|x-y|}\dd{y},
    \qquad
    \psi(x)=-\frac{1}{4\pi}\int_{\R^3}\frac{\diiv\varphi(y)}{|x-y|}\dd{y}.
    \]
\end{remark}

\subsection{Existence of energy-variational solutions}

The abstract theory developed before leads to the following existence theorem.

\begin{theorem}\label{thm:binormalex}
Let $ \mu_0 \in \V^*$ such that $\E(\mu_0)<\infty$. Then there exists an energy-variational solution in the sense of Definition~\ref{def:binormal} with  $E(0)= \int_{\R^3}\de |\mu_0|$.  
\end{theorem}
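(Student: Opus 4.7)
The plan is to apply the abstract existence result Theorem~\ref{main theorem1} with $\tilde{\mathcal K}=\mathcal K$ and the choices of $\V$, $\Y$, $\E$, $\Psi\equiv 0$, $A$, $\mathcal K$ introduced in Section~\ref{sec:binormal}, for which Assumptions~\ref{space as}--\ref{ass:K.conv} need to be verified. Assumption~\ref{space as} reduces to separability of $\V$, together with the observation that only density of the embedding $\Y\hookrightarrow\V$ (not compactness) is actually invoked in the proof of Theorem~\ref{main theorem1}. Assumption~\ref{energy as} is immediate since $\E(\mu)=\|\mu\|_{\V^*}$ is a norm, hence convex, weakly* lower semicontinuous, and coercive with $\alpha=1$, $\beta=0$; by Proposition~\ref{prop:Estar} this gives $B_1\subset\dom\partial\E^*\subset\V^{**}$, which justifies the restriction $|\varphi|\leq 1$ on test functions in Definition~\ref{def:binormal}, cf.~Remark~\ref{rem:testfct.binormal}. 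Since $\Psi\equiv 0$, Assumption~\ref{ass energy bal} is satisfied by $\zeta=0$: one has $0\in\partial\E^*(\Phi)$ whenever $\|\Phi\|_{\V^{**}}\leq 1$ by Fenchel--Young, and $\langle A(0),\Phi\rangle=0$ trivially.

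Both regularity-weight assumptions rest on the identity
\[
-\langle A(\mu),\varphi\rangle + \mathcal K(\varphi)\,\E(\mu)
= \int_{\R^3}\bigl[\mathcal K(\varphi) - M(x):\tau_\mu\otimes\tau_\mu\bigr]\,\de|\mu|(x)
=: \int_{\R^3} j(x,\de\mu),
\]
where $M=(\nabla(\curl\varphi))_\sym$ and the integrand is written in Reshetnyak form via the positively $1$-homogeneous function $j(x,\xi)=\mathcal K(\varphi)|\xi|-\xi^\top M(x)\xi/|\xi|$. Since $|M:\tau\otimes\tau|\leq \|M\|_{L^\infty}\leq \mathcal K(\varphi)/3$, the integrand is pointwise nonnegative, and the weak* lower semicontinuity required in~\ref{ass:K.lsc} follows from the classical lsc of $\mu\mapsto\int g\,\de|\mu|$ on $\mathcal M(\R^3;\R^3)$ for continuous $g\geq 0$, after noting that a bounded weak*-convergent sequence in $\V^*$ also converges weakly* in $\mathcal M(\R^3;\R^3)$ (by extraction and uniqueness of the limit pairing on $\V$). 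The lower bound~\eqref{eq:ass.K.lowerbound} in~\ref{ass:K.conv} is likewise immediate from the nonnegativity of the integrand.

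The delicate step is the convexity of the functional required in~\ref{ass:K.conv}. By the Reshetnyak representation theorem, this is equivalent to the pointwise convexity of $j(x,\cdot)$ on $\R^3$. Away from the origin, $j(x,\cdot)$ is smooth and positively $1$-homogeneous, so its Hessian must vanish in the radial direction; a direct calculation at $\xi=\omega$ on the unit sphere, with $N:=\mathcal K(\varphi)\mathbb I_3 - M(x)$, shows that its restriction to the tangent plane $\omega^\perp$ equals $2\tilde N - (\omega^\top N\omega)\mathbb I_{\omega^\perp}$, where $\tilde N$ denotes the restriction of $N$ to $\omega^\perp$. Positive semidefiniteness reduces to the pointwise inequality
\[
\mathcal K(\varphi) \;\geq\; 2\lambda_{\max}(M(x)) - \lambda_{\min}(M(x)),
\]
which is exactly ensured by $\mathcal K(\varphi) \geq 3\|M\|_{L^\infty}$ (since both $\lambda_{\max}$ and $-\lambda_{\min}$ are bounded by $\|M\|_{L^\infty}$) and is essentially sharp in the scenario $\lambda_{\max}=-\lambda_{\min}=\|M\|_{L^\infty}$. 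This is the analytic reason behind the factor $3$ in~\eqref{eq:regweigth.binormal}, and carrying out this Hessian computation correctly is the main obstacle of the proof. Once all assumptions are verified, Theorem~\ref{main theorem1} produces the desired energy-variational solution with $E(0+)=\E(\mu_0)$.
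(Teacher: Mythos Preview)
Your approach matches the paper's: verify \ref{space as}--\ref{ass:K.conv} with $\tilde{\mathcal K}=\mathcal K$ and invoke Theorem~\ref{main theorem1}, the crux being the convexity of the $1$-homogeneous integrand $j(x,\cdot)$, which you (like the paper's Lemma~\ref{lem:convexity.1hom}) reduce to the eigenvalue inequality $\mathcal K(\varphi)\geq 2\lambda_{\max}(M(x))-\lambda_{\min}(M(x))$. Your Hessian computation on the unit sphere is equivalent to the paper's argument.

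One slip: your lower-semicontinuity argument for \ref{ass:K.lsc} is not correct as written. The functional is \emph{not} of the form $\mu\mapsto\int g\,\de|\mu|$ for a fixed continuous $g\geq 0$, because the integrand depends on $\tau_\mu=\de\mu/\de|\mu|$; nonnegativity alone does not give weak* lower semicontinuity for such functionals of measures (oscillation of the tangent field can destroy it). The correct tool is Reshetnyak's lower-semicontinuity theorem, whose hypothesis is precisely the convexity and positive $1$-homogeneity of $j(x,\cdot)$ that you establish for \ref{ass:K.conv}. Hence your convexity computation already delivers the required lsc; the paper handles both assumptions at once via Reshetnyak in Lemma~\ref{lem:bilow}. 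Your side remark that the compactness of $\Y\hookrightarrow\V$ is not actually invoked in the proof of Theorem~\ref{main theorem1} is a fair observation that the paper itself leaves implicit.
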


In order to apply the general result from Theorem~\ref{main theorem1},
we verify necessary assumptions~\ref{space as}--\ref{ass:K.conv}
in the following two lemmata. 

\begin{lemma}[Properties of the energy functional]
    \label{lem:energy.binormal}
    Let the functional $ \mathcal{E}:\V^* \to [0,\infty]$ be defined by~\eqref{eq:entropy.binormal}.
    Then $\E$ is convex and weakly* lower semicontinuous and trivially satisfies 
    $\E(\mu)\geq\|\mu\|_{\V^*}$.
    Moreover, for all $\xi\in\V^{**}$ it holds
    \begin{equation}
        \label{Estar.binormal}
        \E^*(\xi)=
        \begin{cases}
            0 & \text{if }\|\xi\|_{\V^{**}}\leq 1,
            \\
            \infty &\text{else.}
        \end{cases}
    \end{equation}
    In particular, $\dom\E^*=\dom\partial\E^*$,
    and 
    $0\in\partial\E^*(\xi)$ for all $\xi\in\dom\partial\E^*$,
    so that
    for any $\varphi \in \Y\cap\dom\partial\E^*$ there exists an element 
    $\mu \in \partial \E^*(\f\varphi)$ (namely $\mu=0$) such that 
    $\langle A (\mu),\varphi\rangle= 0$. 
\end{lemma}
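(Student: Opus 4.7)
The plan is to establish the six assertions by explicitly computing the Fenchel conjugate $\mathcal{E}^*$ and then reading off the subdifferential structure and the energy-balance identity.

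Convexity of $\mathcal{E}$ is immediate, as it is the total variation norm restricted to $\V^*$. The trivial bound $\mathcal{E}(\mu) \geq \|\mu\|_{\V^*}$ follows since $\|\varphi\|_{\V} = \|\varphi\|_{\mathcal{C}_0(\R^3;\R^3)} \leq 1$ implies $|\varphi| \leq 1$ pointwise and hence $|\langle \mu, \varphi\rangle| \leq \int|\varphi|\de|\mu| \leq \mathcal{E}(\mu)$; passing to the supremum over such $\varphi$ gives the inequality. For weak* lower semicontinuity, I would argue that if $\mu_n \stackrel{*}{\rightharpoonup} \mu$ in $\V^*$ with $\liminf \mathcal{E}(\mu_n) < \infty$, a subsequence is bounded in $\mathcal{M}(\R^3;\R^3)$, so by sequential Banach--Alaoglu a further subsequence has a weak* cluster point $\tilde\mu \in \mathcal M$ that is divergence-free and agrees with $\mu$ on $\V$. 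Since the annihilator of $\V$ intersected with the divergence-free measures is trivial (a gradient measure that is also divergence-free is harmonic in the distributional sense, and the only finite harmonic vector-valued Radon measure on $\R^3$ is zero), one infers $\tilde\mu = \mu$, and weak* lower semicontinuity of the total variation on $\mathcal{M}$ completes the step.

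For $\mathcal{E}^*$, the upper bound $\mathcal{E}^* \leq \iota_{\overline B_1(\V^{**})}$ follows from $\mathcal{E}\geq \|\cdot\|_{\V^*}$ by order-reversal under conjugation, and combined with $\mathcal{E}^*(\xi) \geq \langle 0, \xi\rangle - \mathcal{E}(0) = 0$ (from choosing $\mu = 0$ in the defining supremum) yields $\mathcal{E}^*(\xi) = 0$ for $\|\xi\|_{\V^{**}} \leq 1$. When $\|\xi\|_{\V^{**}} > 1$, pick $\mu_0 \in \V^*$ with $\|\mu_0\|_{\V^*} \leq 1$ and $\langle \mu_0, \xi\rangle \geq 1+\delta$ via the definition of the bidual norm, and invoke the isometric identification $\|\mu_0\|_{\mathcal{M}} = \|\mu_0\|_{\V^*}$ for divergence-free measures to obtain $\mathcal{E}^*(\xi) \geq t(\langle\mu_0,\xi\rangle - \mathcal{E}(\mu_0))\geq t\delta \to \infty$ as $t\to \infty$. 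Knowing $\mathcal{E}^* = \iota_{\overline B_1(\V^{**})}$, the rest is soft: since $\mathcal{E}^*$ attains its global minimum $0$ on its entire effective domain, the subdifferential at every such point contains $0$ (it is $\{0\}$ at interior points of $\overline B_1$ and the outer normal cone at boundary points, which still contains $0$), giving both $\dom\partial\mathcal{E}^* = \dom\mathcal{E}^*$ and the membership $0 \in \partial\mathcal{E}^*(\xi)$. Finally, taking $\mu = 0 \in \partial\mathcal{E}^*(\varphi)$, the integration in $\langle A(0), \varphi\rangle = \int_{\R^3}\nabla(\curl\varphi):\tau_0 \otimes \tau_0 \de|0|$ is against the zero measure and hence vanishes.

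The main obstacle is the isometric identification $\|\mu\|_{\V^*} = \|\mu\|_{\mathcal{M}(\R^3;\R^3)}$ for divergence-free $\mu$, which is implicitly used in both the weak* lower semicontinuity step and in the lower bound on $\mathcal{E}^*$. The direction $\|\mu\|_{\V^*} \leq \|\mu\|_{\mathcal{M}}$ is trivial; the converse requires approximating the total variation of $\mu$ by pairings $\langle \mu, \varphi\rangle$ with divergence-free test functions of supremum norm at most one. I would obtain this via a Smirnov-type decomposition of $\mu$ into elementary closed curves, for each of which a compactly supported divergence-free vector potential tangent to the curve with unit sup-norm can be built explicitly. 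Even if the full identification proved difficult, the downstream use of the lemma in the paper only needs $0 \in \partial \mathcal{E}^*(\varphi)$ whenever $\varphi \in \Y$ with $\|\varphi\|_{\mathcal{C}_0} \leq 1$, which is already supplied by the upper bound $\mathcal{E}^* \leq \iota_{\overline B_1(\V^{**})}$ alone.
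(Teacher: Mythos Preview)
Your argument is correct and considerably more careful than the paper's own proof, which disposes of the entire lemma in three sentences: it simply notes that $\mathcal{E}$ equals the total-variation norm on the weakly* closed subspace $\V^*$, declares the identification of $\mathcal{E}^*$ to be ``a standard example from calculus of variations'', and says the remaining assertions follow. In particular, the paper does not explicitly address the point you correctly isolate as the only genuine obstacle, namely the isometric identification $\|\mu\|_{\V^*}=\|\mu\|_{\mathcal{M}(\R^3;\R^3)}$ for divergence-free $\mu$.

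You are right that without this identification the ``$\infty$'' case of \eqref{Estar.binormal} does not follow: one would only know $\mathcal{E}^*\le\iota_{\overline B_1(\V^{**})}$ and $\mathcal{E}^*\ge 0$, leaving open whether $\dom\mathcal{E}^*$ is strictly larger than the unit ball. Your proposed route via a Smirnov-type decomposition into elementary solenoids is the natural way to prove the isometry (and also underlies the uniqueness of the divergence-free representative used in your weak*-lsc step). Your observation that the downstream applications of the lemma---in particular, verifying assumption \ref{ass energy bal} and restricting test functions to $\{|\varphi|\le 1\}$ in Definition~\ref{def:binormal}---only require $0\in\partial\mathcal{E}^*(\varphi)$ for $\|\varphi\|_{\V^{**}}\le 1$, which already follows from the easy inequality $\mathcal{E}^*\le\iota_{\overline B_1}$, is both correct and a useful remark.
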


\begin{proof}
    As $\E=\|\cdot\|_{\mathcal M(\R^3;\R^3)}$ on the weakly* closed subspace $\V^*$,
    the convexity and weak* lower semicontinuity follow directly,
    and the coercivity estimate is trivial.
    The identification of $\E^*$ 
    is a standard example from calculus of variations,
    and~\eqref{Estar.binormal} directly implies the remaining assertions.
\end{proof}

To show that the choice of the regularity weight $\mathcal K$ as in~\eqref{eq:regweigth.binormal}
is suitable, we use the following convexity result.

\begin{lemma}
    \label{lem:convexity.1hom}
    Let $\mathbbm M\in\R^{d\times d}_{\sym}$, $d\in\N$, be a symmetric matrix, 
    and let $\lambda_m,\lambda_M\in\R$ be the smallest and the largest eigenvalue of $\mathbbm M$, respectively.
    For $\gamma\in\R$ consider the function
    \[
    f\colon\R^d\to\R,\quad f(\xi)=\gamma|\xi|+ \frac{\mathbbm M\xi\cdot\xi}{|\xi|},
    \]
    where we set $f(0)=0$.
    Then $f$ is convex if and only if $\gamma\geq \lambda_M-2\lambda_m$.
\end{lemma}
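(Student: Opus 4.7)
The plan is to exploit the fact that $f$ is positively $1$-homogeneous and smooth away from the origin, so that convexity on $\R^d$ reduces to a pointwise condition on the Hessian on $\R^d\setminus\{0\}$. I would first observe that a continuous, positively $1$-homogeneous function $f$ which is $C^2$ on $\R^d\setminus\{0\}$ is convex on $\R^d$ if and only if (a) $\nabla^2 f(\xi)$ is positive semidefinite for every $\xi\neq 0$, and (b) $f(v)+f(-v)\geq 0$ for all $v\in\R^d$. Condition (a) gives convexity of $f$ along segments that avoid the origin; condition (b) handles the remaining case of segments through $0$, where $f$ is piecewise linear with one-sided slopes $-f(-v)$ and $f(v)$ at the origin. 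Since the present $f$ is even, (b) reduces to $f\geq 0$, equivalently $\gamma\geq -\lambda_m$, which is weaker than the claimed threshold $\gamma\geq \lambda_M-2\lambda_m$ because $\lambda_M\geq\lambda_m$. So only (a) is binding, and the task becomes to characterize when $\nabla^2 f\succeq 0$ on $\R^d\setminus\{0\}$.

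The next step is to apply Euler's identity: since $\nabla f$ is $0$-homogeneous, differentiating $\nabla f(\lambda\xi)=\nabla f(\xi)$ in $\lambda$ at $\lambda=1$ yields $\nabla^2 f(\xi)\xi=0$. Thus $\xi$ lies in the kernel of the Hessian, and positive semidefiniteness is equivalent to nonnegativity on the orthogonal complement $\xi^\perp$. Because $\nabla^2 f$ is $(-1)$-homogeneous, it suffices to test unit vectors $\xi$. A direct computation—applying product and chain rules to $\xi\mapsto|\xi|$ and $\xi\mapsto\mathbbm M\xi\cdot\xi/|\xi|$ and using $\xi\cdot v=0$ to discard cross terms—yields the compact formula
\[
\nabla^2 f(\xi)(v,v)=(\gamma-\mathbbm M\xi\cdot\xi)\,|v|^2 + 2\,\mathbbm Mv\cdot v
\qquad\text{for } |\xi|=1,\ v\perp\xi.
\]

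It then remains to recognize that (a) is equivalent to the scalar inequality $\gamma\geq \mathbbm M\xi\cdot\xi-2\,\mathbbm Mv\cdot v$ for all unit $\xi,v\in\R^d$ with $\xi\perp v$. The Rayleigh--Ritz bounds $\mathbbm M\xi\cdot\xi\leq \lambda_M$ and $\mathbbm Mv\cdot v\geq \lambda_m$ bound the right-hand side by $\lambda_M-2\lambda_m$; to see that this bound is attained, I would choose unit eigenvectors of $\mathbbm M$ corresponding to $\lambda_M$ and $\lambda_m$, which by the spectral theorem can be picked orthogonal whenever $\lambda_M>\lambda_m$. The degenerate case $\lambda_M=\lambda_m$ is immediate: then $\mathbbm M=\lambda\mathbbm I_d$ and $f=(\gamma+\lambda)|\xi|$, which is convex iff $\gamma\geq -\lambda=\lambda_M-2\lambda_m$. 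The main technical step is the Hessian calculation; the subtler conceptual point is the reduction from convexity on all of $\R^d$ to Hessian positivity on $\R^d\setminus\{0\}$, which requires a little care because $f$ is only continuous, not $C^2$, at the origin.
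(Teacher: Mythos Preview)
Your proposal is correct and follows essentially the same route as the paper: both arguments compute the Hessian of $f$ away from the origin, reduce by homogeneity to unit $\xi$, and test only directions $v\perp\xi$ to obtain the scalar condition $\gamma+2\mathbbm M v\cdot v-\mathbbm M\xi\cdot\xi\geq 0$, from which the eigenvalue characterization follows. You are in fact more careful than the paper on two points it glosses over---the behavior on segments through the origin (the paper just writes ``As $f$ is continuous, it suffices to consider $\xi\neq 0$'') and the orthogonality constraint when selecting eigenvectors for $\lambda_M$ and $\lambda_m$---so your write-up would strengthen rather than weaken the argument.
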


\begin{proof}
    We follow the proof of~\cite[Theorem 1.2]{DacorognaHaeberly1996convhomog},
    where a similar function on the space $\R^{2\times2}$ was considered.
    To study convexity, we first compute the second-order derivatives of $f$.
    As $f$ is continuous, it suffices to consider $\xi\neq 0$.
    We have
    \[
    \begin{aligned}
    \partial_j f(\xi)
    &=\gamma \frac{\xi_j}{|\xi|}+\frac{2(\mathbbm M\xi)_j}{|\xi|}-\frac{(\mathbbm M\xi\cdot\xi)\xi_j}{|\xi|^3},
    \\
    \partial_j\partial_k f(\xi)
    &=\gamma\frac{\delta_{jk}}{|\xi|}-\gamma\frac{\xi_j\xi_k}{|\xi|^3}
    +\frac{2\mathbbm M_{jk}}{|\xi|}-\frac{4(\mathbbm M\xi)_j\xi_k+(\mathbbm M\xi\cdot\xi)\delta_{jk}}{|\xi|^3}
    +\frac{3(\mathbbm M\xi\cdot\xi)\xi_j\xi_k}{|\xi|^5}.
    \end{aligned}
    \]
    Hence, $f$ is convex if and only if for all $\xi\neq 0$ and $\mu\in\R^d$ the quantity
    \[
    \begin{aligned}
    \sum_{j,k=1}^d\partial_j\partial_k f(\xi)\mu_j\mu_k
    &=\gamma\Big(\frac{|\mu|^2}{|\xi|}-\frac{(\xi\cdot\mu)^2}{|\xi|^2}\Big)
    \\
    &+\frac{1}{|\xi|^5}\Big(
    2|\xi|^4(\mathbbm M\mu\cdot\mu)-4|\xi|^2(\mathbbm M\xi\cdot\mu)(\xi\cdot\mu)
    -|\xi|^2(\mathbbm M\xi\cdot\xi)|\mu|^2+3(\mathbbm M\xi\cdot\xi)(\xi\cdot\mu)^2
    \Big)
    \end{aligned}
    \]
    is nonnegative.
    As $\nabla^2 f$ is homogeneous of degree $-1$, it suffices to consider $|\xi|=1$.
    We decompose $\mu=\alpha\xi+\beta\eta$ with $s,t\in\R$ and $|\eta|=1$ such that $\xi\cdot\eta=0$.
    Then it holds
    \[
    \begin{aligned}
    &|\mu|^2=\alpha^2+\beta^2, \quad
    \xi\cdot\mu=\alpha, \quad
    \mathbbm M\xi\cdot\mu= \alpha (\mathbbm M\xi\cdot\xi)+\beta (\mathbbm M\xi\cdot\eta),
    \\
    &M\mu\cdot\mu= \alpha^2 (\mathbbm M\xi\cdot\xi)+2\alpha\beta (\mathbbm M\xi\cdot\eta)+\beta^2(\mathbbm M\eta\cdot\eta),
    \end{aligned}
    \]
    and we have
    \[
    \sum_{j,k=1}^d\partial_j\partial_k f(\xi)\mu_j\mu_k
    =\beta^2\big(\gamma+2\mathbbm M\eta\cdot\eta-\mathbbm M\xi\cdot\xi\big).
    \]
    Therefore, $f$ is convex if and only if 
    \[
    \gamma+2\inf_{|\eta|=1}\mathbbm M\eta\cdot\eta - \sup_{|\xi|=1}\mathbbm M\xi\cdot\xi \geq 0.
    \]
    As $\mathbbm M$ is symmetric, this corresponds to the asserted characterization.
\end{proof}

\begin{lemma}[Regularity weight]\label{lem:bilow}
    For every $\varphi\in\Y$, the mapping $\mu \mapsto - \langle A(\mu),\f \varphi \rangle + \mathcal{K}(\f \varphi) \E(\mu)$
    is nonnegative, convex and weakly* lower semicontinuous in $\V^*$.
\end{lemma}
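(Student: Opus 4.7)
The plan is to rewrite the functional as an integral of a $1$-homogeneous, position-dependent integrand in the tangent field $\tau_\mu=\de\mu/\de|\mu|$, and then invoke Reshetnyak's classical lower semicontinuity theorem. Since $\tau_\mu\otimes\tau_\mu$ is symmetric, only the symmetric part of $\nabla(\curl\f\varphi)$ contributes to $\langle A(\mu),\f\varphi\rangle$. Setting $\mathbbm M(x):=(\nabla(\curl\f\varphi))_{\sym}(x)$ and $\kappa:=\mathcal{K}(\f\varphi)=3\|\mathbbm M\|_{L^\infty(\R^3)}$, this yields
\[
-\langle A(\mu),\f\varphi\rangle+\mathcal{K}(\f\varphi)\,\E(\mu)
=\int_{\R^3} h(x,\tau_\mu(x))\,\de|\mu|(x),
\qquad
h(x,\xi):=\kappa|\xi|-\frac{\mathbbm M(x)\xi\cdot\xi}{|\xi|},
\]
with $h(x,0):=0$. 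The integrand $h$ is continuous on $\R^3\times\R^3$ and positively $1$-homogeneous in $\xi$.

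Nonnegativity of $h$ is immediate from $|\mathbbm M(x)\xi\cdot\xi|\leq\|\mathbbm M(x)\||\xi|^2\leq(\kappa/3)|\xi|^2$, which gives $h(x,\xi)\geq (2\kappa/3)|\xi|\geq 0$. For pointwise convexity of $h(x,\cdot)$ in $\xi$, I would invoke Lemma~\ref{lem:convexity.1hom} with the matrix $-\mathbbm M(x)$ and $\gamma=\kappa$: denoting the eigenvalues of $\mathbbm M(x)$ by $\lambda_1(x)\leq\lambda_2(x)\leq\lambda_3(x)$, the required condition reads $\kappa\geq 2\lambda_3(x)-\lambda_1(x)$, which is implied by $|\lambda_i(x)|\leq\|\mathbbm M(x)\|\leq\kappa/3$. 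The prefactor $3$ in~\eqref{eq:regweigth.binormal} is precisely the sharp calibration for this estimate.

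Convexity of the functional $\mu\mapsto\int_{\R^3}h(x,\de\mu/\de|\mu|)\,\de|\mu|$ then follows from pointwise convexity combined with $1$-homogeneity: for $\mu_1,\mu_2\in\V^*$ and the common reference $\sigma:=|\mu_1|+|\mu_2|$, positive $1$-homogeneity rewrites the functional as $\int_{\R^3} h(x,\de\mu/\de\sigma)\,\de\sigma$, whose convexity in $\de\mu/\de\sigma$ is inherited pointwise from $h$. Weak* lower semicontinuity follows from Reshetnyak's theorem on $\mathcal M(\R^3;\R^3)$: a nonnegative continuous integrand that is convex and positively $1$-homogeneous in its second argument produces a sequentially weakly* l.s.c.~functional on $\mathcal M(\R^3;\R^3)$. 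Since bounded weak* convergence in $\V^*$ coincides with bounded weak* convergence in the ambient space $\mathcal M(\R^3;\R^3)$ (by Banach--Alaoglu applied in $\mathcal M$ together with uniqueness of weak* limits in $\V^*$), the property transfers to $\V^*$.

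The main technical point is the convexity check through Lemma~\ref{lem:convexity.1hom}, where the sharp ratio between $\|\mathbbm M\|_{L^\infty}$ and the eigenvalue combination $2\lambda_{\max}-\lambda_{\min}$ dictates the coefficient in the definition of $\mathcal K$. The rest of the argument is a direct invocation of standard results on integrals against vector-valued Radon measures.
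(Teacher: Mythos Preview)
Your proof is correct and follows essentially the same route as the paper: rewrite the functional as $\int_{\R^3} h(x,\tau_\mu)\,\de|\mu|$ with a continuous, positively $1$-homogeneous integrand, verify pointwise convexity of $h(x,\cdot)$ via Lemma~\ref{lem:convexity.1hom}, and conclude convexity and weak* lower semicontinuity by Reshetnyak's theorem. You supply more detail than the paper does---the explicit eigenvalue check $2\lambda_3-\lambda_1\le 3\|\mathbbm M\|$, the reference-measure argument for convexity, and the transfer of lower semicontinuity from $\mathcal M(\R^3;\R^3)$ to $\V^*$---but the structure and the key ingredients are identical.
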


\begin{proof}
    We rewrite the mapping as
    \[
    \begin{aligned}
    \mu \mapsto
    \int_{\R^3} -\nabla (\nabla \times \f \varphi) : \tau_\mu \otimes \tau_\mu 
        + 3 \| ( \nabla ( \nabla \times \f \varphi))_{\sym}\|_{L^\infty(\R^3;\R^{3\times 3 })} \de |\mu|
    =\int_{\R^3} f(x,\tau_\mu(x))\de|\mu|(x)
    \end{aligned}
    \]
    with
    \[
    f(x,\xi)
    :=-\frac{[\nabla (\curl \varphi)(x)]_{\sym} : \xi \otimes \xi}{|\xi|}
    + 3 \| ( \nabla ( \curl \varphi))_{\sym}\|_{L^\infty(\R^3;\R^{3\times 3 })} |\xi|.
    \]
    For this identity, we use that $|\tau_\mu|=1$ and that $\tau_\mu\otimes\tau_\mu$ is symmetric.
    The function $f$ is continuous, and it is $1$-homogeneous with respect to $\xi$. 
    Moreover, $f(x,\cdot)$ is convex by Lemma~\ref{lem:convexity.1hom}.
    By a version of Reshetnyak's semicontinuity theorem, 
    see~\cite[Proposition 2.37]{AmbrosioFuscoPallara2000} for instance,
    this implies the asserted convexity and weak* lower semicontinuity.
    The nonnegativity trivial.
\end{proof}

We can now conclude the existence of energy-variational solutions to the binormal curvature flow 
from Theorem~\ref{main theorem1}.

\begin{proof}[Proof of Theorem~\ref{thm:binormalex}]
    In order to apply the general existence theorem (Theorem~\ref{main theorem1}) for this setting,
    we verify the assumptions~\ref{space as}--\ref{ass:K.conv}.
    These follow from the previous lemmata with the choices $\Psi=0$ and $\mathcal K=\tilde{\mathcal K}$.
\end{proof}

\subsection{Weak-strong uniqueness principle}

Although the concept of energy-variational solutions 
seems to be less restrictive than the generalized solutions from~\cite{BNCF1,BNCF2}, we can still adapt the  for weak-strong uniqueness from~\cite[Theorem~2]{BNCF1} to our setting, making it a reasonable concept. 
However, we have to slightly modify the definition of energy-variational solutions
by omitting the norm restriction on the admissible test functions
that appears in Definition~\ref{def:binormal}.

Recall the identification of a curve $\gamma$
with a measure $\mu_\gamma$ from~\eqref{eq:curves.as.measures}.

\begin{theorem}[Weak-strong uniqueness] \label{thm:weakstrong}
Let $L>0$ and $\gamma: [0,T] \times \mathbbm T^1 \rightarrow \mathbbm{R}^3$ denote a smooth classical solution of the binormal curvature flow without self-intersection,
that is, the arc-length parameterization $\gamma_t:=\gamma(t, \cdot)$ solves~\eqref{eq:strongBinormal} and is injective for any $t\in[0,T]$.
Let $(\mu,E)$ be an energy-variational solution to the binormal curvature flow on $[0,T]$ 
with $E(0+)=\E(\mu_0)$ and
such that~\eqref{envar:binormal} holds
for all $\varphi \in \C^1([0,T]; \C^2_c(\R^3;\R^3))$ and for a.e.~$0\leq s<t\leq T$.
If the initial values of $\gamma$ and $\mu$ coincide
in the sense that $\mu_{\gamma_0}=\mu_0$,
then $\mu_{\gamma_t}=\mu_t$ for a.a~$t\in[0,T]$.
\end{theorem}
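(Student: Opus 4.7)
The plan is a relative-energy argument based on testing the energy-variational inequality against a carefully chosen extension of the smooth tangent field. Since $\gamma$ is smooth and $\gamma_t$ is injective for every $t\in[0,T]$, there exists $\delta_0>0$ such that the closest-point projection $\pi_t\colon N_{\delta_0}(M_t)\to M_t$ onto $M_t=\gamma_t(\mathbbm T^1)$ and the squared distance $d_t^2(x)$ are jointly smooth on $[0,T]\times N_{\delta_0}(M_t)$. Fix a smooth cutoff $\chi\colon[0,\infty)\to[0,1]$ with $\chi(0)=1$, $\chi'(0)<0$, and $\chi\equiv 0$ on $[\delta_0^2/4,\infty)$, and define
\[
U(t,x):=\chi(d_t(x)^2)\,\tau_{\gamma_t}(\pi_t(x))
\]
on $N_{\delta_0}(M_t)$ and $U(t,x):=0$ otherwise. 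Then $U\in\C^1([0,T];\C_c^2(\R^3;\R^3))$, $U(t,\gamma_t(s))=\tau_{\gamma_t}(s)$, $|U|\leq 1$, and there is $c>0$ with $1-U(t,x)\cdot\tau\geq c\,d_t(x)^2+\tfrac12|\tau-U(t,x)|^2$ for every unit $\tau\in\R^3$ and every $x\in N_{\delta_0}(M_t)$.

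Introduce the relative energy
\[
R(t):=E(t)-\int_{\R^3}U(t)\cdot\de\mu_t=\big[E(t)-\mathcal E(\mu_t)\big]+\int(1-U(t)\cdot\tau_{\mu_t})\de|\mu_t|\geq 0,
\]
which satisfies $R(0+)=0$ by the hypotheses $\mu_0=\mu_{\gamma_0}$, $E(0+)=\mathcal E(\mu_0)$ and $U(0,\gamma_0(s))=\tau_{\gamma_0}(s)$. Inserting $\varphi=U$ into the extended energy-variational inequality granted by the theorem and using $E-\mathcal E(\mu)\leq R$ to absorb the $\mathcal K$-term gives
\[
R(t)\leq R(s)+\int_s^t\!\Big(\int\nabla(\curl U):\tau_\mu\otimes\tau_\mu\de|\mu|-\int\partial_tU\cdot\de\mu\Big)\de\tau+\int_s^t\mathcal K(U)R(\tau)\de\tau.
\]

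The heart of the proof is the pointwise bound
\[
-\partial_tU(t,x)\cdot\tau+\nabla(\curl U)(t,x):\tau\otimes\tau\leq C\,\big(1-U(t,x)\cdot\tau\big)
\]
for every $t\in[0,T]$, $x\in\R^3$ and unit $\tau\in\R^3$, with $C$ depending only on $\|\gamma\|_{\C^3([0,T]\times\mathbbm T^1)}$. Outside $N_{\delta_0}(M_t)$ this is trivial since $U\equiv 0$. At $(x,\tau)=(\gamma_t(s),\tau_{\gamma_t}(s))$ the left-hand side must vanish: differentiating the identity $U(t,\gamma_t(s))=\partial_s\gamma_t(s)$ in $t$ and using $\partial_t\gamma=\partial_s\gamma\times\partial_{ss}\gamma$ yields $\partial_tU(t,\gamma)\cdot\tau_\gamma=(\partial_s\gamma\times\partial_{sss}\gamma)\cdot\partial_s\gamma=0$, while a direct (though lengthy) computation in tubular coordinates adapted to $M_t$ shows that $\nabla(\curl U)(t,\gamma):\tau_\gamma\otimes\tau_\gamma=0$ for the above $U$, possibly after refining the ansatz by a normal correction cancelling a leading-order term coming from the curvature of $\gamma$. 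Taylor-expanding the smooth functions $\partial_tU$ and $\nabla\curl U$ in the normal directions to $M_t$, together with a quadratic expansion of the $\tau$-dependence around $U(t,x)$, then bounds the left-hand side by a constant multiple of $d_t(x)^2+|\tau-U(t,x)|^2$, which by the construction of $U$ is in turn bounded by a multiple of $1-U\cdot\tau$. Integrating against $\de|\mu|$ shows that the bracket in the relative-energy inequality is dominated by $CR(\tau)$, and Gronwall's lemma applied to $R(t)\leq C\int_0^tR\,\de\tau$ with $R(0+)=0$ forces $R\equiv 0$.

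From $R\equiv 0$ we read off $E=\mathcal E(\mu)$ and $U\cdot\tau_{\mu_t}=1$ $|\mu_t|$-a.e., which combined with $|U|\leq 1=|\tau_{\mu_t}|$ forces $\mathrm{supp}(\mu_t)\subset M_t$ and $\tau_{\mu_t}=\tau_{\gamma_t}$ on $M_t$. Hence $\mu_t=\kappa(t)\,\mu_{\gamma_t}$ for some nonnegative $\kappa$, constant in space on the simple closed loop $M_t$ by $\diiv\mu_t=0$. The initial condition gives $\kappa(0)=1$; testing the energy-variational inequality with $\varphi=U$ (which, invoking the smooth identity of Lemma~\ref{lem:binormal.weakform}, collapses to $E(t)\leq E(s)$ and hence $\kappa$ non-increasing) and with $\varphi=2U$ (which analogously reduces to $(1-2)L(\kappa(t)-\kappa(s))\leq 0$ and hence $\kappa$ non-decreasing) pins $\kappa\equiv 1$, so $\mu_t=\mu_{\gamma_t}$ as claimed. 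The principal obstacle of the scheme is the pointwise estimate in the third paragraph: producing an extension $U$ for which the leading terms at $M_t$ actually cancel may require an iterative refinement of the naive ansatz (adding normal corrections encoding curvature and torsion of $\gamma$), and the Taylor-remainder bookkeeping is the technical core of the argument, analogous to the relative-energy computation in~\cite{BNCF1}.
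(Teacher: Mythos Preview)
Your architecture matches the paper's: relative energy against an extended tangent field, Gronwall via a pointwise bound, support and tangent identification, then pinning the multiplicative constant. The gap is your treatment of the pointwise estimate $-\partial_tU\cdot\tau+\nabla(\curl U):\tau\otimes\tau\leq C(1-U\cdot\tau)$. You hedge (``possibly after refining the ansatz by a normal correction''), which means you have not actually established it for your $U$; the Taylor-remainder sketch does not close without knowing the leading-order cancellations hold, and for a generic cutoff $\chi$ they need not. The paper sidesteps this entirely by taking the specific field $X_\gamma(t,x)=\big(1-(d_t(x)/r)^2\big)^3\,\tau_t(P_t(x))$ of Jerrard--Smets and citing \cite[Proposition~4]{BNCF1}, where the bound is proved for precisely this choice. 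You should do the same rather than leave the crux of the argument conditional.

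In the final step there is a minor slip: testing with $\varphi=U$ yields $0\leq0$ (since $\int U\cdot\de\mu_\tau=\kappa(\tau)L=E(\tau)$), not $E(t)\leq E(s)$; the non-increasing direction comes from $\varphi=0$. Your $\varphi=2U$ step is correct once you note, via Lemma~\ref{lem:binormal.weakform} applied to the smooth $\gamma$, that the two integral contributions vanish along $M_\sigma$. The paper organizes this slightly differently: it rescales $\varphi=X/\alpha$ and lets $\alpha\searrow0$ to recover the weak \emph{equality}~\eqref{eq:binormal.measures} (using linearity of the test-function class to upgrade the inequality), then inserts $X=X_\gamma$ and computes explicitly that $\int_{M_\sigma}\partial_tX_\gamma\cdot\tau_\sigma\,\de\mathcal H^1=0$ and $\int_{M_\sigma}\nabla(\curl X_\gamma):\tau_\sigma\otimes\tau_\sigma\,\de\mathcal H^1=0$, giving $(\theta_t-1)L=0$ directly. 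Your two-sided testing and the paper's rescaling-to-equality are equivalent uses of the extended test-function hypothesis.
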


To show this result, we strongly follow
the proof of \cite[Theorem~2]{BNCF1}. We only need to adapt the concluding argument a little. 
As one ingredient, 
we use the following elementary result
on divergence-free vector measures
supported on a curve.

\begin{lemma}\label{lem:const}
Let $M\subset\mathbbm R^3$ be a closed, connected $\C^1$-curve 
with arc-length parametrization $\gamma:\mathbbm T^1\to M$.
Let
\[
\mu=\theta\,\tau_\gamma\,\mathcal H^1\lfloor_M
\]
with $\theta\in L^1(M)$. If $\operatorname{div}\mu=0$ in $\mathcal D'(\mathbbm R^3)$, then $\theta$ is constant a.e.\ on $M$.
\end{lemma}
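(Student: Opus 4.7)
The plan is to reduce the divergence-free condition on $\mu$ to the statement that $\tilde\theta := \theta \circ \gamma$ has vanishing weak derivative on $\T^1$, which forces $\theta$ to be a.e.\ constant on $M$.

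First, I would rewrite the divergence-free condition in parametric form. For any $\psi \in \C_c^1(\R^3)$, the identity $|\gamma'| \equiv 1$ gives $\tau_\gamma(\gamma(s)) = \gamma'(s)$, and $\mathcal{H}^1 \lfloor_M$ pulls back to $\de s$ under $\gamma$. Hence
\begin{equation*}
0 \,=\, \int_{\R^3} \nabla \psi \cdot \de \mu \,=\, \int_{\T^1} \tilde\theta(s)\, \gamma'(s) \cdot \nabla \psi(\gamma(s))\, \de s \,=\, \int_{\T^1} \tilde\theta(s)\, \frac{\de}{\de s}\bigl[\psi \circ \gamma\bigr](s)\, \de s,
\end{equation*}
where $\tilde\theta \in L^1(\T^1)$ since $\theta \in L^1(M)$.

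Second, I would show that every $f \in \C^1(\T^1)$ can be realized as $\psi \circ \gamma$ for some $\psi \in \C_c^1(\R^3)$. Since $\gamma$ is arc-length parametrized and $M$ is a connected closed $\C^1$-curve (hence without self-intersection), $\gamma : \T^1 \to M$ is a $\C^1$-embedding. The tubular neighborhood theorem then provides an open neighborhood $U \supset M$ together with a $\C^1$-retraction $\pi : U \to M$. Choosing a cutoff $\chi \in \C_c^\infty(\R^3)$ with $\chi \equiv 1$ on $M$ and $\mathrm{supp}\,\chi \subset U$, the function $\psi := \chi \cdot (f \circ \gamma^{-1} \circ \pi)$ lies in $\C_c^1(\R^3)$ and satisfies $\psi \circ \gamma = f$.

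Combining these two steps yields $\int_{\T^1} \tilde\theta(s)\, f'(s)\, \de s = 0$ for every $f \in \C^1(\T^1)$. Equivalently, $\tilde\theta$ has vanishing distributional derivative on $\T^1$, so it is a.e.\ constant; since $\gamma$ is a bijection $\T^1 \to M$, this gives that $\theta$ is a.e.\ constant on $M$. The main obstacle is the surjectivity step, which ensures the class of admissible test functions along $M$ is rich enough; this is standard once $\gamma$ is recognized as a $\C^1$-embedding, and the assumption that $M$ has no self-intersection is essential here, since otherwise $\tilde\theta$ could fail to descend to a well-defined function on $M$ even if constant on $\T^1$.
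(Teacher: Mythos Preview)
Your proof is correct and follows essentially the same approach as the paper: both rewrite the divergence-free condition as $\int_{\T^1}\tilde\theta\,(\psi\circ\gamma)'\,\de s=0$ and conclude that $\tilde\theta'=0$ in $\mathcal D'(\T^1)$. The paper simply asserts this last implication, whereas you make it explicit by showing via the tubular-neighborhood construction that every $f\in\C^1(\T^1)$ arises as $\psi\circ\gamma$; this extra step is a genuine improvement in rigor, since otherwise one only tests against the (a priori smaller) class $\{\psi\circ\gamma:\psi\in\C_c^\infty(\R^3)\}$.
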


\begin{proof}
For any $\varphi\in \C_c^\infty(\mathbbm R^3)$ the assumption $\diiv\mu=0$ yields
\[
0=\langle\operatorname{div}\mu,\varphi\rangle
=-\langle\mu,\nabla\varphi\rangle
=-\int_M \theta(x)\,\tau_\gamma(x)\cdot\nabla\varphi(x)\,\dd\mathcal H^1(x).
\]
With the arc-length parameter $s\in\mathbbm T^1$, such that $x=\gamma(s)$ and $\dd \mathcal H^1(x)=\dd s$, this gives
\[
0=-\int_{\mathbbm T^1} \theta(\gamma(s))\,\frac{\dd}{\dd s}\big(\varphi(\gamma(s))\big)\,\dd s
=\int_{\mathbbm T^1} \frac{\dd}{\dd s}\theta(\gamma(s))\,\varphi(\gamma(s))\,\dd s
\]
by integration by parts.
Thus $(\theta \circ \gamma)'=0$ in $\mathcal D'(\mathbbm T^1)$, whence $\theta$ is constant a.e.~on $M$.
\end{proof}

We further introduce some tools from~\cite{BNCF1}. 
For $\gamma=(\gamma_t)_{t\in[0,T]}$ as in Theorem~\ref{thm:weakstrong},
we set
$$
r \equiv 
r(\gamma):=\frac{1}{2} \min _{t \in [0,T]} \min \left(\left\|\partial_{s s} \gamma_t\right\|_{\infty}^{-1}, r_s(t)\right)>0.
$$
Here, the security radius $r_s(t)$, $t\in[0,T]$, is defined as the largest positive real number with the property that every point $x\in\R^3$ with $\operatorname{dist}\left(x, \gamma(t)\right)<r_s(t)$ has a unique closest point $P_t(x)$ on $\gamma_t$. 
Indeed, such a point always exists and it holds $r(t)>0$ as $\gamma_t\in \C^1(\R)$
has no self-intersections.
Define the vector field $X_{\gamma}$ on $\mathbbm{R}^3 \times [0,T]$ by 
\begin{equation}\label{Xgamma}
    X_{\gamma}(t,x)=f\left(\operatorname{dist}\left(x, \gamma_t\right)^2\right) \tau_t\left(P_t(x)\right),
\end{equation}
where $\tau_t$ is the oriented unit tangent vector along $\gamma_t$ and 
$$
f\left(d^2\right)= 
\begin{cases}
\left(1-(d / r)^2\right)^3 & \text { for } 0 \leq d^2 \leq r^2, \\ 
0 & \text { for } d^2 \geq r^2.
\end{cases}
$$
The derivation of the following proposition is technical and due to~\cite[Proposition~4]{BNCF1}.

\begin{proposition}\label{prop:1}
    Let $\gamma$ be as in Theorem~\ref{thm:weakstrong}, and let $X_{\gamma}$ be given by~\eqref{Xgamma}.
    For any $\xi_0 \in \mathbbm S^2 \subset \mathbbm{R}^3$, the estimate
$$
\left|\partial_t X_{\gamma} \cdot \xi_0-\nabla\left(\curl X_{\gamma}\right):\left(\xi_0 \otimes \xi_0\right)\right| \leq K\left(1-X_{\gamma} \cdot \xi_0\right)
$$
holds on $[0,T^*]\times\mathbbm{R}^3$, where
$$
K:=\frac{54}{r^2}+14\left\|\partial_{s s s} \gamma\right\|_{L^{\infty}\left([0,T^*] \times \mathbbm T^1\right)}.
$$
\end{proposition}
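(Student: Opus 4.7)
The plan is to establish the inequality pointwise in $(t,x)\in[0,T^*]\times\R^3$. Note that the right-hand side is automatically nonnegative, since $|X_\gamma|\leq f(d^2)\leq 1$ and $|\xi_0|=1$ imply $X_\gamma\cdot\xi_0\leq 1$. Outside the tubular neighborhood $T_r:=\{x:\operatorname{dist}(x,\gamma_t)<r\}$, the field $X_\gamma$ vanishes identically; and because $f(d^2)=(1-d^2/r^2)^3$ has a triple zero at $d=r$, the functions $\partial_t X_\gamma$ and $\nabla(\curl X_\gamma)$ also vanish there. On $T_r^c$ one has $1-X_\gamma\cdot\xi_0=1$, and the inequality is trivially satisfied. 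Thus the work reduces to $T_r$, where the tubular-neighborhood projection $P_t$ is smooth.

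Inside $T_r$, I would introduce the parameterization $x=\gamma(t,s^*(t,x))+\zeta(t,x)$, where $s^*=s^*(t,x)$ is determined by the orthogonality condition $\tau(t,s^*)\cdot(x-\gamma(t,s^*))=0$, and $d=|\zeta|$. Differentiating this implicit relation and using the binormal flow equation $\partial_t\gamma=\partial_s\gamma\times\partial_{ss}\gamma$, together with $\partial_s\gamma=\tau$, yields explicit expressions for $\partial_t s^*$ and $\nabla s^*$ in terms of $\zeta$, $\tau(s^*)$, and $\partial_s\tau(s^*)$. Writing $X_\gamma=f(d^2)\,\tau(P_t(x))$ and applying the chain rule then gives closed-form expressions for $\partial_t X_\gamma$ and for $\nabla(\curl X_\gamma):\xi_0\otimes\xi_0$; the identity $\curl(g\,\tau)=\nabla g\times\tau+g\curl\tau$ is used, and second spatial derivatives of $\tau(P_t(x))$ bring in $\partial_{sss}\gamma$ after a second application of BNCF.

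The key algebraic step is that the \emph{leading} terms in $\partial_t X_\gamma\cdot\xi_0$ and $\nabla(\curl X_\gamma):\xi_0\otimes\xi_0$ cancel exactly: this is where the binormal flow equation is essential, and it is precisely the pointwise analogue of the transport/weak formulation~\eqref{eq:binormal.weak}. After this cancellation, the residual quantities are of two structural types. Terms of the first type carry factors involving $f'(d^2)$ or $f''(d^2)$; on $T_r$ these are controlled by $c\,r^{-2}(1-d^2/r^2)^k$ for $k\in\{1,2\}$, and hence by $c\,r^{-2}(1-f(d^2))$ after using $1-(1-u)^3\geq u$ for $u\in[0,1]$. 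Terms of the second type are proportional to $f(d^2)$ times quantities built from $\tau(P_t(x))$ and $\xi_0$ that vanish whenever $\tau(P_t(x))=\xi_0$, so they can be bounded by $c\,f(d^2)(1-\tau(P_t(x))\cdot\xi_0)$. Both kinds of bounds collapse onto the decomposition
\[
1-X_\gamma\cdot\xi_0 \;=\; (1-f(d^2)) \;+\; f(d^2)\bigl(1-\tau(P_t(x))\cdot\xi_0\bigr),
\]
which is a sum of two nonnegative quantities that dominate each type of error separately.

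The main obstacle is a careful bookkeeping of the numerical constants: one must collect every residual appearing from derivatives of $f$, from the implicit derivatives of $s^*$, and from substituting $\partial_t\tau$ via BNCF, and show that together they contribute at most $54/r^2$ for the $f$-derivative terms and at most $14\|\partial_{sss}\gamma\|_\infty$ for the terms involving the third arc-length derivative of $\gamma$. There is no conceptual difficulty beyond this, but the combinatorial accounting (including control of the cubic cut-off $f$ by geometric quantities like $\|\partial_{ss}\gamma\|_\infty^{-1}$ through the definition of $r$) is delicate, and is essentially the computation carried out in~\cite[Proposition~4]{BNCF1}.
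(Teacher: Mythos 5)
The paper does not actually prove this proposition: its ``proof'' consists of the single line ``See \cite[Proposition~4]{BNCF1}.'' Your outline reconstructs the strategy of that cited proof --- reduction to the tubular neighbourhood, differentiation of the nearest-point projection, cancellation of the leading terms via the binormal flow equation, and control of the residuals through the splitting $1-X_\gamma\cdot\xi_0=(1-f(d^2))+f(d^2)(1-\tau(P_t(x))\cdot\xi_0)$ --- so in approach you are aligned with the only proof the paper offers, and you are explicit that the constant bookkeeping is deferred to the reference, exactly as the paper itself does.

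One step in your sketch is stated too loosely to stand as written. For unit vectors one has $1-\tau\cdot\xi_0=\tfrac12|\tau-\xi_0|^2$, so a residual term that merely ``vanishes whenever $\tau(P_t(x))=\xi_0$'' is in general only $O(|\tau-\xi_0|)=O\bigl(\sqrt{1-\tau\cdot\xi_0}\bigr)$, which is \emph{not} dominated by $c\,f(d^2)(1-\tau\cdot\xi_0)$ when $\tau$ is close to $\xi_0$. To close this, the terms of your ``second type'' must either vanish to \emph{second} order in $\tau-\xi_0$, or --- as in the Jerrard--Smets computation --- the first-order pieces must come paired with factors of the distance $d$ (or of $f'$, $f''$), so that a Young-type inequality splits the product $d\,|\tau-\xi_0|$ into $d^2/r^2+|\tau-\xi_0|^2$, each half of which is then absorbed into $(1-f(d^2))/r^2$ and $f(d^2)(1-\tau\cdot\xi_0)$ respectively. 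This pairing is precisely where the definition of $r$ through $\|\partial_{ss}\gamma\|_\infty^{-1}$ and the security radius enters, and it is the genuinely delicate part of the argument; without it the claimed bound by $K(1-X_\gamma\cdot\xi_0)$ does not follow from first-order vanishing alone. With that caveat made precise, the rest of your outline is consistent with the cited proof.
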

\begin{proof}
    See~\cite[Proposition~4]{BNCF1}.
\end{proof}

Now we have provided all the necessary ingredients to
deduce Theorem~\ref{thm:weakstrong}.

\begin{proof}[Proof of Theorem~\ref{thm:weakstrong}]
For $\varphi=X_{\gamma}$ as defined in~\eqref{Xgamma}, 
we obtain from~\eqref{envar:binormal} that
\begin{align*}
    \left[ E - \E(\mu) + \int_{\R^3}\left( 1 -  X_{\gamma} \cdot \frac{\de \mu}{\de |\mu|}\right) \de |\mu|\right] \Big |_0^t + \int_0^t \int_{\R^3} \t X_{\gamma}   \cdot \frac{\de\mu}{\de |\mu|} -  \nabla (\curl X_{\gamma}) : \frac{\de\mu}{\de |\mu|} \otimes \frac{\de\mu}{\de |\mu|}  \de |\mu| \de \sigma 
    \\
    + \int_s^t \| ( \nabla ( \curl X_{\gamma}))_{\sym,+}\|_{L^\infty(\R^3;\R^{3\times 3 })} \left [ \int_{\R^3} \de |\mu| - E \right]\de \sigma \leq 0\,.
\end{align*}
    Invoking Proposition~\ref{prop:1}, we infer the inequality 
    \begin{equation*}
        \left[ E - \E(\mu) + \int_{\R^3}\left( 1 -  X_{\gamma} \cdot \frac{\de \mu}{\de |\mu|}\right) \de |\mu|\right] \Big |_0^t \leq \int_0^tC(X) \left[ E - \E(\mu) + \int_{\R^3}\left( 1 -  X_{\gamma} \cdot \frac{\de \mu}{\de |\mu|}\right) \de |\mu|\right] \de \sigma \,,
    \end{equation*}
    where $C(X_{\gamma}):=\max\left\{K,\| ( \nabla ( \curl X_{\gamma}))_{\sym,+}\|_{L^\infty(\R^3;\R^{3\times 3 })}\right\}$. 
    Since $\mu_{\gamma_0}=\mu_0$ implies $\tau_{0}=\tau_{\mu_0}$,
    and since $E(0+)=\E(\mu_0)$,
    Gronwall's inequality lets us conclude 
    \begin{equation}
        \label{eq:binorm.weakstrong.gronwall}
         E = \E(\mu) \quad \text{and}\quad  \int_{\R^3}\left( 1 -  X_{\gamma} \cdot \frac{\de \mu}{\de |\mu|}\right) \de |\mu| = 0 
    \end{equation}
    a.e.~in $[0,T]$. 
    In particular, for a.e.~$t\in[0,T]$, 
    the second identity implies $X_{\gamma}(t,\cdot) = \frac{\de \mu_t}{\de |\mu_t|}$ $|\mu_t|$-a.e.,
    and the construction of $X_{\gamma} $ and the property $\big|\frac{\de \mu_t}{\de |\mu_t|}\big|=1$ 
    show that $|\mu_t|$ is absolutely continuous with respect to $|\mu_{\gamma_t}|$,
    and that
    $ \frac{\de \mu_t}{\de |\mu_t|}= \tau_{t} $ $|\mu_t|$-a.e,
    for $\tau_{t}$ as in~\eqref{Xgamma}.
    Let $M_t=\gamma(\mathbbm T^1)$ be the support of the curve $\gamma_t$,
    and set $M=\bigcup_{t\in[0,T]}\{t\}\times M_t$.
    If we consider $\mu=(\mu_t)_{t\in[0,T]}$ and $\mu_\gamma=(\mu_{\gamma_t})_{t\in[0,T]}$ as elements of $\mathcal M([0,T]\times\R^3;\R^3)$,
    the Radon-Nikodym theorem yields the decomposition
    $\dd\mu=\frac{\dd\mu}{\dd|\mu|}\dd|\mu|=\tau\,\theta\,\dd|\mu_\gamma|$,
    with $\theta=\frac{\dd|\mu|}{\dd|\mu_\gamma|} \in L^1(M)$, $\theta\geq0$.
    This implies
    $\dd\mu_t = \theta_t \dd\mu_{\gamma_t}=\theta_t\,\tau_t \dd\mathcal H^1\lfloor_{M_t}$ for $\theta_t=\theta(t,\cdot)$.
    As $\mu_t$ is divergence-free for a.a.~$t\in[0,T]$, Lemma~\ref{lem:const} implies that $ \theta_t$ is constant on $M_t$.
    Moreover, we have $\theta_0=1$ since $\mu_0=\mu_{\gamma_0}$.

It remains to show $\theta\equiv 1$.
Since $ E=\E(\mu)$ by~\eqref{eq:binorm.weakstrong.gronwall},
the defect term in the energy-variational formulation~\eqref{envar:binormal} vanishes.
As we omitted the restriction $|\varphi|\leq 1$,
we can choose $\varphi = \frac{X}{\alpha}$ with $\alpha>0$ and $X\in \C^1([0,t^*];\C^2(\R^3;\R^3))$
in~\eqref{envar:binormal},
multiply the resulting inequality by $\alpha$ and pass to the limit with $\alpha \searrow 0$.
Then we see that $\mu$ satisfies the generalized formulation~\eqref{eq:binormal.measures}.
Actually, we first obtain~\eqref{eq:binormal.measures} with inequality sign, 
but the linear structure of the class of test functions leads to an equality. 
With $X=X_{\gamma}$ from~\eqref{Xgamma}, this yields 
\begin{align}\label{inserting}
\begin{split}
         (\theta_t-1)L(\gamma)
         &=\bigg[  \int_{\mathbbm T^1} \theta_\sigma\circ\gamma  \de s \bigg] \bigg|_{\sigma=0}^t 
         =  \left[ \int_{\R^3} \theta_\sigma X_{\gamma}(\sigma) \cdot \tau_\sigma \de \mathcal{H}^1\lfloor_{M_\sigma} \right] \Big|_{\sigma=0}^t 
         \\& = \int_0^t \theta_\sigma \int_{\R^3} \left[ \t X_{\gamma}(\sigma)\cdot \tau_{\sigma} -\nabla (\curl X_{\gamma})(\sigma) : \tau_{\sigma} \otimes \tau_{\sigma} \right]\de \mathcal{H}^1 \lfloor_{M_\sigma}  \de \sigma 
\end{split}
\end{align}
for any $t>0$.
To simplify the right-hand side, observe that
for any smooth vector field $X$, it holds 
$$
\begin{aligned}
    \big[\nabla (\curl X) : \tau_{\gamma} \otimes \tau_{\gamma} \big]\circ\gamma
=\epsilon_{i j k}\left(\left(\partial_{i l} X^j\right) \circ \gamma\right) \partial_s \gamma^l \partial_s \gamma^k
=\epsilon_{i j k} \partial_s\left(\left(\partial_i X^j\right) \circ \gamma\right) \partial_s \gamma^k ,
\end{aligned}
$$
where $\epsilon_{i j k}$ is the usual Levi--Civita symbol, 
and summation over repeated indices is implicit. 
Moreover, since $\gamma$ is a classical solution to the binormal curvature flow~\eqref{eq:strongBinormal}, standard vector calculus implies 
$\partial_t\gamma\times\partial_s\gamma=\partial_{ss}\gamma$.
By integration by parts, for any $\sigma\in[0,T]$ we thus obtain 
\begin{align*}
    \int_{\R^3}\left[ \nabla (\curl X) : \tau_{\sigma} \otimes \tau_{\sigma} \right]\de \mathcal{H}^1\lfloor_{M_\sigma} 
    &= - \int_{\mathbbm T^1} \epsilon_{i j k} \left(\partial_i X^j \circ \gamma \right) \partial_{ss} \gamma^k \de s 
    \\
    & = - \int_{\mathbbm T^1} \epsilon_{i j k} \left(\partial_i X^j \circ \gamma\right) ( \partial_t \gamma\times  \partial_s \gamma)^k \de s
     \\
    & = - \int_{\mathbbm T^1} \epsilon_{i j k} \left(\partial_i X^j \circ \gamma \right) \epsilon_{klm} ( \partial_t \gamma^l  \partial_s \gamma^m)   \de s
    \\
     & = - \int_{\mathbbm T^1} (\delta_{il}\delta_{jm}-\delta_{im}\delta_{jl}) \left(\partial_i X^j \circ \gamma \right) ( \partial_t \gamma ^l \partial_s \gamma^m)  \de s
     \\
     & = - \int_{\mathbbm T^1}  \left(\partial_l X^m \circ \gamma \right) ( \partial_t \gamma^l  \partial_s \gamma^m)-\left(\partial_m X^l \circ \gamma \right) ( \partial_t \gamma^l  \partial_s \gamma^m)   \de s
     \\
     & = - \int_{\mathbbm T^1}  \partial _t \left( X^m \circ \gamma \right)   \partial_s \gamma^m- \partial_s \left( X^l \circ \gamma \right)  \partial_t \gamma^l    \de s
     \\
     & = - \int_{\mathbbm T^1}  \partial _t \left( X^m \circ \gamma \right)   \partial_s \gamma^m+  \left( X^l \circ \gamma \right)  \partial_t  \partial_s \gamma^l    \de s\,,
\end{align*}
where we simply write $\gamma=\gamma_\sigma$.
Choosing $ X = X_{\gamma}$, we find that both terms in the last line coincide on $\mathbbm T^1$ and
$$ 
 \int_{\R^3}\left[ \nabla (\curl X_{\gamma}) : \tau_{\sigma} \otimes \tau_{\sigma} \right]\de \mathcal{H}^1\lfloor_{M_\sigma}   = - \frac{\de}{\de t} \int_{\mathbbm T^1} | \partial_s \gamma|^2 \de s = 0
$$
for all $\sigma\in[0,T]$ as $ \gamma_\sigma$ is parameterized by arc length. 
Moreover,
it holds $\t X _{\gamma}(\sigma)\cdot \tau_{\sigma} =0$ on $M_\sigma$
since $ X _{\gamma}\circ \gamma = \partial _s \gamma$ implies
$$
[\t X _{\gamma}(\sigma)\cdot \tau_{\sigma}]\circ\gamma = \t\partial _s  \gamma_\sigma \cdot \partial _s  \gamma_\sigma = \frac{1}{2} \partial_t  | \partial _s  \gamma_\sigma  |^2  = 0.
$$
In conclusion, the right-hand side of~\eqref{inserting} vanishes,
and we obtain $\theta= 1$ a.e.~in $[0,T]$, and thus the assertion. 
\end{proof}

\begin{remark}
    In Theorem~\ref{thm:weakstrong},
    we had to assume that the energy-variational solution $(\mu,E)$ satisfies~\eqref{envar:binormal}
    for all $\varphi \in \C^1([0,T]; \C^2(\R^3;\R^3))$,
    without the restriction $|\varphi|\leq 1$,
    which is present in Definition~\ref{def:binormal}.
    This was necessary to conclude that $\mu$ satisfies the generalized formulation~\eqref{eq:binormal.measures},
    which leads to $\theta\equiv 1$ in the previous proof.
    Without this refinement, one can merely conclude $\theta\leq 1$ since~\eqref{envar:binormal} with $\varphi=0$ and~\eqref{eq:binorm.weakstrong.gronwall}
    imply that $\E(\mu)$ is nonincreasing.
    However, the existence of an energy-variational solution of this type 
    remains open and does not follow from the abstract theory developed before.
\end{remark}

\subsection*{Acknowledgments}
This research has been 
funded and supported by Deutsche Forschungsgemeinschaft (DFG) 
through grant SPP 2410 ``Hyperbolic Balance Laws in Fluid Mechanics:~Complexity, Scales, Randomness (CoScaRa)'', Project Number
525941602.
T.~Eiter further acknowledges the funding by Deutsche
Forschungsgemeinschaft (DFG) through grant CRC 1114 ``Scaling Cascades in
Complex Systems'', Project Number 235221301, Project YIP.

\printbibliography

@article{giesselmannlattanziotzavaras2017eulerkorteweg,
 author = {Giesselmann, Jan and Lattanzio, Corrado and Tzavaras, Athanasios E.},
 title = {Relative energy for the {Korteweg} theory and related {Hamiltonian} flows in gas dynamics},
 fjournal = {Archive for Rational Mechanics and Analysis},
 journal = {Arch. Ration. Mech. Anal.},
 volume = {223},
 number = {3},
 pages = {1427--1484},
 year = {2017},
 doi = {10.1007/s00205-016-1063-2},
}

@article{audiardhaspot2017eulerkorteweg,
 author = {Audiard, Corentin and Haspot, Boris},
 title = {Global well-posedness of the {Euler}-{Korteweg} system for small irrotational data},
 fjournal = {Communications in Mathematical Physics},
 journal = {Commun. Math. Phys.},
 volume = {351},
 number = {1},
 pages = {201--247},
 year = {2017},
 doi = {10.1007/s00220-017-2843-8},
}

@article{benzonigavagedanchindescombes2007eulerkorteweg,
 author = {Benzoni-Gavage, Sylvie and Danchin, Raphaël and Descombes, Stéphane},
 title = {On the well-posedness for the {Euler}-{Korteweg} model in several space dimensions},
 fjournal = {Indiana University Mathematics Journal},
 journal = {Indiana Univ. Math. J.},
 volume = {56},
 number = {4},
 pages = {1499--1579},
 year = {2007},
 doi = {10.1512/iumj.2007.56.2974},
}

@article{dafermos1973entropyratecriterion,
 author = {Dafermos, Constantine M.},
 title = {The entropy rate admissibility criterion for solutions of hyperbolic conservation laws},
 fjournal = {Journal of Differential Equations},
 journal = {J. Differ. Equations},
 volume = {14},
 pages = {202--212},
 year = {1973},
 doi = {10.1016/0022-0396(73)90043-0},
}

@article{breitfeireslhofmanova2020semiflowisentropicEuler,
 author = {Breit, Dominic and Feireisl, Eduard and Hofmanov{\'a}, Martina},
 title = {Solution semiflow to the isentropic {Euler} system},
 fjournal = {Archive for Rational Mechanics and Analysis},
 journal = {Arch. Ration. Mech. Anal.},
 volume = {235},
 number = {1},
 pages = {167--194},
 year = {2020},
 doi = {10.1007/s00205-019-01420-6},
}

@article{breitfeireislhofmanova2020semiflowcompleteEuler,
 author = {Breit, Dominic and Feireisl, Eduard and Hofmanov{\'a}, Martina},
 title = {Dissipative solutions and semiflow selection for the complete {Euler} system},
 fjournal = {Communications in Mathematical Physics},
 journal = {Commun. Math. Phys.},
 volume = {376},
 number = {2},
 pages = {1471--1497},
 year = {2020},
 doi = {10.1007/s00220-019-03662-7},
}

@misc{eiterschindler2025timeasymptoticselfsimilarity,
      title={Time-asymptotic self-similarity of the damped compressible Euler equations in parabolic scaling variables}, 
      author={Thomas Eiter and Stefanie Schindler},
      year={2025},
      eprint={2507.03688},
      archivePrefix={arXiv},
}

@misc{houwangyang2025nonuniquenesslerayhopfsolutionsunforced,
      title={Nonuniqueness of Leray-Hopf solutions to the unforced incompressible 3D Navier-Stokes Equation}, 
      author={Thomas Hou and Yixuan Wang and Changhe Yang},
      year={2025},
      eprint={2509.25116},
      archivePrefix={arXiv},
}

@book{pedregal1997parametrizedmeasures,
 author = {Pedregal, Pablo},
 title = {Parametrized measures and variational principles},
 fseries = {Progress in Nonlinear Differential Equations and Their Applications},
 series = {Prog. Nonlinear Differ. Equ. Appl.},
 volume = {30},
 year = {1997},
 publisher = {Basel: Birkh{\"a}user},
}

@book{grisvard1985elliptic,
 author = {Grisvard, P.},
 title = {Elliptic problems in nonsmooth domains},
 fseries = {Monographs and Studies in Mathematics},
 series = {Monogr. Stud. Math.},
 volume = {24},
 year = {1985},
 publisher = {Pitman, Boston, MA},
}

@book{mielkeroubicek2015ris,
 author = {Mielke, Alexander and Roub{\'{\i}}{\v{c}}ek, Tom{\'a}{\v{s}}},
 title = {Rate-independent systems. {Theory} and application},
 fseries = {Applied Mathematical Sciences},
 series = {Appl. Math. Sci.},
 volume = {193},
 year = {2015},
 publisher = {New York, NY: Springer},
 doi = {10.1007/978-1-4939-2706-7},
}

@book{roubicek2005nonlpde,
 author = {Roub{\'{\i}}{\v{c}}ek, Tom{\'a}{\v{s}}},
 title = {Nonlinear partial differential equations with applications},
 fseries = {ISNM. International Series of Numerical Mathematics},
 series = {ISNM, Int. Ser. Numer. Math.},
 volume = {153},
 year = {2005},
 publisher = {Basel: Birkh{\"a}user},
 keywords = {35-02,35J60,35K55},
 zbMATH = {2200202},
 Zbl = {1087.35002}
}

@misc{eiter2025viscoelastoplasticity,
      title={Solution concepts for a model of visco-elasto-plasticity with slight compressibility}, 
      author={Thomas Eiter},
      year={2025},
      eprint={2512.17464},
      archivePrefix={arXiv},
}

@article{DacorognaHaeberly1996convhomog,
 author = {Dacorogna, Bernard and Haeberly, Jean-Pierre},
 title = {On convexity properties of homogeneous functions of degree one},
 fjournal = {Proceedings of the Royal Society of Edinburgh. Section A. Mathematics},
 journal = {Proc. R. Soc. Edinb., Sect. A, Math.},
 volume = {126},
 number = {5},
 pages = {947--956},
 year = {1996},
 doi = {10.1017/S0308210500023180},
}

@book{AmbrosioFuscoPallara2000,
 author = {Ambrosio, Luigi and Fusco, Nicola and Pallara, Diego},
 title = {Functions of bounded variation and free discontinuity problems},
 fseries = {Oxford Mathematical Monographs},
 series = {Oxford Math. Monogr.},
 year = {2000},
 publisher = {Oxford: Clarendon Press},
}

@article{Jerrard2002vortexfilamentdyn,
 author = {Jerrard, Robert L.},
 title = {Vortex filament dynamics for {Gross}-{Pitaevsky} type equations},
 fjournal = {Annali della Scuola Normale Superiore di Pisa. Classe di Scienze. Serie V},
 journal = {Ann. Sc. Norm. Super. Pisa, Cl. Sci. (5)},
 volume = {1},
 number = {4},
 pages = {733--768},
 year = {2002},
 url = {https://eudml.org/doc/84485},
}

@article{diss19,
 author = {Lasarzik, Robert},
 title = {Dissipative solution to the {Ericksen}-{Leslie} system equipped with the {Oseen}-{Frank} energy},
 fjournal = {ZAMP. Zeitschrift f{\"u}r angewandte Mathematik und Physik},
 journal = {Z. Angew. Math. Phys.},
 volume = {70},
 number = {1},
 pages = {39},
 note = {Id/No 8},
 year = {2019},
 doi = {10.1007/s00033-018-1053-3},
}

@Book{barbu,
 Author = {Barbu, Viorel and Precupanu, Teodor},
 Title = {Convexity and optimization in {Banach} spaces.},
 Edition = {4th updated and revised ed.},
 FSeries = {Springer Monographs in Mathematics},
 Series = {Springer Monogr. Math.},
 Year = {2012},
 Publisher = {Dordrecht: Springer},
 DOI = {10.1007/978-94-007-2247-7},
}

@article{BNCF1,
 author = {Jerrard, Robert L. and Smets, Didier},
 title = {On the motion of a curve by its binormal curvature},
 fjournal = {Journal of the European Mathematical Society (JEMS)},
 journal = {J. Eur. Math. Soc. (JEMS)},
 volume = {17},
 number = {6},
 pages = {1487--1515},
 year = {2015},
 doi = {10.4171/JEMS/536},
}

@article{BNCF2,
 author = {Jerrard, Robert L. and Seis, Christian},
 title = {On the vortex filament conjecture for {Euler} flows},
 fjournal = {Archive for Rational Mechanics and Analysis},
 journal = {Arch. Ration. Mech. Anal.},
 volume = {224},
 number = {1},
 pages = {135--172},
 year = {2017},
 doi = {10.1007/s00205-016-1070-3},
}

@Book{troltzsch,
Author = {Fredi Troltzsch},
Title = {Optimal Control of Patrial Differential Equations},
Publisher = {A},
Year = {2009},
}

@Article{envarhyp,
 Author = {Eiter, Thomas and Lasarzik, Robert},
 Title = {Existence of energy-variational solutions to hyperbolic conservation laws},
 FJournal = {Calculus of Variations and Partial Differential Equations},
 Journal = {Calc. Var. Partial Differ. Equ.},
 Volume = {63},
 Number = {4},
 Pages = {40},
 Note = {Id/No 103},
 Year = {2024},
 DOI = {10.1007/s00526-024-02713-9},
}

@book{relax,
    author ={Tomas Roubicek} ,
    title = {Relaxation in Optimization Theory and Variational Calculus.},
    publisher ={De Gruyter Brill} ,
    year = {2020}
}

@article{lagrange,
  title={Nouvelles recherches sur la nature et la propagation du son},
  author={Lagrange, J.L.},
  journal={Miscellanea Taurinensia},
  volume={II},
  pages={151--332},
  year={1760},
}

@Article{leray,
author="Leray, Jean",
title="Sur le mouvement d'un liquide visqueux emplissant l'espace",
journal="Acta Mathematica",
year="1934",
volume="63",
number="1",
pages="193--248",
doi="10.1007/BF02547354",
}

@article {DiPernaMajda,
    AUTHOR = {DiPerna, Ronald J. and Majda, Andrew J.},
     TITLE = {Oscillations and concentrations in weak solutions of the
              incompressible fluid equations},
   JOURNAL = {Comm. Math. Phys.},
  FJOURNAL = {Communications in Mathematical Physics},
    VOLUME = {108},
      YEAR = {1987},
    NUMBER = {4},
     PAGES = {667--689},
       DOI = {10.1007/BF01214424},
}

@book {lionsbook,
    AUTHOR = {Lions, Pierre-Louis},
     TITLE = {Mathematical topics in fluid mechanics. {V}ol. 1},
 PUBLISHER = {The Clarendon Press, New York},
      YEAR = {1996},
     PAGES = {xiv+237},
}

@article{convexintegration,
 author = {De Lellis, Camillo and Sz{\'e}kelyhidi, L{\'a}szl{\'o} jun.},
 title = {The {Euler} equations as a differential inclusion},
 fjournal = {Annals of Mathematics. Second Series},
 journal = {Ann. Math. (2)},
 volume = {170},
 number = {3},
 pages = {1417--1436},
 year = {2009},
 doi = {10.4007/annals.2009.170.1417},
}

@book {raymond,
    AUTHOR = {Ars\'{e}nio, Diogo and Saint-Raymond, Laure},
     TITLE = {From the {V}lasov--{M}axwell--{B}oltzmann system to
              incompressible viscous electro-magneto-hydrodynamics. {V}ol.
              1},
    SERIES = {EMS Monographs in Mathematics},
 PUBLISHER = {European Mathematical Society (EMS), Z\"{u}rich},
      YEAR = {2019},
     PAGES = {xii+406},
      DOI = {10.4171/193},
}

@article{VladTristan,
 author = {Buckmaster, Tristan and Vicol, Vlad},
 title = {Nonuniqueness of weak solutions to the {Navier}-{Stokes} equation},
 fjournal = {Annals of Mathematics. Second Series},
 journal = {Ann. Math. (2)},
 volume = {189},
 number = {1},
 pages = {101--144},
 year = {2019},
 doi = {10.4007/annals.2019.189.1.3},
}

@article{dallas,
 author = {Albritton, Dallas and Bru{\'e}, Elia and Colombo, Maria},
 title = {Non-uniqueness of {Leray} solutions of the forced {Navier}-{Stokes} equations},
 fjournal = {Annals of Mathematics. Second Series},
 journal = {Ann. Math. (2)},
 volume = {196},
 number = {1},
 pages = {415--455},
 year = {2022},
 doi = {10.4007/annals.2022.196.1.3},
}

@misc{selectEuler,
      title={Maximal dissipation and well-posedness of the Euler system of gas dynamics}, 
      author={Eduard Feireisl and Ansgar Jüngel and Mária Lukáčová-Medvid'ová},
      year={2025},
      eprint={2501.05134},
      archivePrefix={arXiv},
}

@misc{RobVari,
      title={Energy-variational structure in evolution equations}, 
      author={Robert Lasarzik},
      year={2025},
      eprint={2503.11438},
      archivePrefix={arXiv},
}

@article{viscoenvar,
 author = {Agosti, Abramo and Lasarzik, Robert and Rocca, Elisabetta},
 title = {Energy-variational solutions for viscoelastic fluid models},
 fjournal = {Advances in Nonlinear Analysis},
 journal = {Adv. Nonlinear Anal.},
 volume = {13},
 pages = {35},
 note = {Id/No 20240056},
 year = {2024},
 doi = {10.1515/anona-2024-0056},
}

@article{envar,
 author = {Lasarzik, Robert},
 title = {On the existence of energy-variational solutions in the context of multidimensional incompressible fluid dynamics},
 fjournal = {Mathematical Methods in the Applied Sciences},
 journal = {Math. Methods Appl. Sci.},
 volume = {47},
 number = {6},
 pages = {4319--4344},
 year = {2024},
 doi = {10.1002/mma.9816},
}

@article{Max,
 author = {Lasarzik, Robert and Reiter, Maximilian E. V.},
 title = {Analysis and numerical approximation of energy-variational solutions to the {Ericksen}-{Leslie} equations},
 fjournal = {Acta Applicandae Mathematicae},
 journal = {Acta Appl. Math.},
 volume = {184},
 pages = {44},
 note = {Id/No 11},
 year = {2023},
 doi = {10.1007/s10440-023-00563-9},
}

@article{maxdiss,
 author = {Lasarzik, Robert},
 title = {Maximally dissipative solutions for incompressible fluid dynamics},
 fjournal = {ZAMP. Zeitschrift f{\"u}r angewandte Mathematik und Physik},
 journal = {Z. Angew. Math. Phys.},
 volume = {73},
 number = {1},
 pages = {21},
 note = {Id/No 1},
 year = {2022},
 doi = {10.1007/s00033-021-01628-1},
}

@article{West,
 author = {Westdickenberg, Michael},
 title = {Minimal acceleration for the multi-dimensional isentropic {Euler} equations},
 fjournal = {Archive for Rational Mechanics and Analysis},
 journal = {Arch. Ration. Mech. Anal.},
 volume = {247},
 number = {3},
 pages = {37},
 note = {Id/No 35},
 year = {2023},
 doi = {10.1007/s00205-023-01864-x},
}

@misc{EmilMaiximalTurb,
      title={Maximal turbulence as a selection criterion for measure-valued solutions}, 
      author={Christian Klingenberg and Simon Markfelder and Emil Wiedemann},
      year={2025},
      eprint={2503.20343},
      archivePrefix={arXiv},
}

@article{leastaction,
 author = {Gimperlein, H. and Grinfeld, M. and Knops, R. J. and Slemrod, M.},
 title = {The least action admissibility principle},
 fjournal = {Archive for Rational Mechanics and Analysis},
 journal = {Arch. Ration. Mech. Anal.},
 volume = {249},
 number = {2},
 pages = {17},
 note = {Id/No 22},
 year = {2025},
 doi = {10.1007/s00205-025-02094-z},
}

@ARTICLE{MielkeIntro,
   author = {Mielke, Alexander},
    title = "{An introduction to the analysis of gradient systems}",
  journal = {{WIAS} Preprint, No. 3022, Berlin},
     year = 2023,
      doi = {10.20347/wias.preprint.3022},
}

@article{MCF,
 author = {Hensel, Sebastian and Laux, Tim},
 title = {A new varifold solution concept for mean curvature flow: convergence of the {Allen}-{Cahn} equation and weak-strong uniqueness},
 fjournal = {Journal of Differential Geometry},
 journal = {J. Differ. Geom.},
 volume = {130},
 number = {1},
 pages = {209--268},
 year = {2025},
 doi = {10.4310/jdg/1747065796},
}

@article{japMeasVal,
 author = {B{\v{r}}ezina, Jan and Feireisl, Eduard},
 title = {Measure-valued solutions to the complete {Euler} system},
 fjournal = {Journal of the Mathematical Society of Japan},
 journal = {J. Math. Soc. Japan},
 volume = {70},
 number = {4},
 pages = {1227--1245},
 year = {2018},
 doi = {10.2969/jmsj/77337733},
}

@article{binormCurveSol,
 author = {Banica, Valeria and Vega, Luis},
 title = {Riemann's non-differentiable function and the binormal curvature flow},
 fjournal = {Archive for Rational Mechanics and Analysis},
 journal = {Arch. Ration. Mech. Anal.},
 volume = {244},
 number = {2},
 pages = {501--540},
 year = {2022},
 doi = {10.1007/s00205-022-01769-1},
}

@article{gallenmüller2023cahnhillardkellersegelsystemshighfriction,
 author = {Gallenm{\"u}ller, Dennis and Gwiazda, Piotr and {\'S}wierczewska-Gwiazda, Agnieszka and Wo{\'z}nicki, Jakub},
 title = {Cahn-Hillard and {Keller}-{Segel} systems as high-friction limits of {Euler}-{Korteweg} and {Euler}-{Poisson} equations},
 fjournal = {Calculus of Variations and Partial Differential Equations},
 journal = {Calc. Var. Partial Differ. Equ.},
 volume = {63},
 number = {2},
 pages = {27},
 note = {Id/No 47},
 year = {2024},
 doi = {10.1007/s00526-023-02656-7},
}

@article{Chiodaroli_2014,
 author = {Chiodaroli, Elisabetta and De Lellis, Camillo and Kreml, Ond{\v{r}}ej},
 title = {Global ill-posedness of the isentropic system of gas dynamics},
 fjournal = {Communications on Pure and Applied Mathematics},
 journal = {Commun. Pure Appl. Math.},
 volume = {68},
 number = {7},
 pages = {1157--1190},
 year = {2015},
 doi = {10.1002/cpa.21537},
}

@article{Donatelli_2014,
 author = {Donatelli, Donatella and Feireisl, Eduard and Marcati, Pierangelo},
 title = {Well/ill posedness for the {Euler}-{Korteweg}-{Poisson} system and related problems},
 fjournal = {Communications in Partial Differential Equations},
 journal = {Commun. Partial Differ. Equations},
 volume = {40},
 number = {7},
 pages = {1314--1335},
 year = {2015},
 doi = {10.1080/03605302.2014.972517},
}

@book{chipot,
 author = {Chipot, Michel},
 title = {Elements of nonlinear analysis},
 year = {2000},
 publisher = {Basel: Birkh{\"a}user},
}

@book{evansgariepy,
 author = {Evans, Lawrence Craig and Gariepy, Ronald F.},
 title = {Measure theory and fine properties of functions},
 edition = {2nd edition},
 fseries = {Textbooks in Mathematics},
 series = {Textb. Math.},
 year = {2015},
 publisher = { CRC Press},
 doi = {10.1201/9781003583004},
}

 \end{document}